\numberwithin{equation}{section}
\numberwithin{figure}{section}
\numberwithin{table}{section}
\newtheorem{theorem}{Theorem}[section]
\newtheorem{corollary}{Corollary}[section]
\newtheorem{lemma}{Lemma}[section]
\theoremstyle{definition}
\newtheorem{example}{Example}[section]
\newtheorem{note}{Note}[section]
\newtheorem{remark}{Remark}[section]
\newtheorem{computation}{Computation}[section]
\newtheorem{simulation}{Simulation}[section]
\numberwithin{equation}{section}
\definecolor{darkred}{rgb}{0.7, 0, 0}
\definecolor{darkbrown}{rgb}{0.55, 0.2, 0.15}
\definecolor{darkblue}{rgb}{0.1,0.1,0.6}
\definecolor{darkgreen}{rgb}{0.1,0.5,0.2}
\newcommand{\dd}{\mathrm{d}}
\newcommand{\Var}{\mathrm{Var}}
\newcommand{\Rem}{\mathrm{Rem}}
\newcommand{\dist}{\mathrm{dist}}
\newcommand{\ts}{\textsc{ts}}
\newcommand{\srs}{\textsc{srs}}
\newcommand{\lc}{\textsc{lc}}
\newcommand{\gc}{\textsc{gc}}
\newcommand{\zero}{\textsc{lower}}
\newcommand{\one}{\textsc{upper}}
\newcommand{\half}{\textsc{middle}}
\newcommand{\ATVaR}{\mathrm{TVaR}}
\newcommand{\FTVaR}{\mathrm{TV@R}}
\newcommand{\LC}{\mathrm{LC}}
\newcommand{\GC}{\mathrm{GC}}
\newcommand{\Cov}{\mathrm{Cov}}
\newcommand{\ave}{\textsc{ave}}
\newcommand{\med}{\textsc{med}}
\title{\vspace{-20mm} Fundamentals of non-parametric statistical inference for integrated quantiles\thefootnote\relax\footnotetext{We are indebted to Vytaras Brazauskas, Jan Dhaene, and Ruodu Wang for generous suggestions that shaped our work on the current version of the manuscript. This research has been supported by the NSERC Alliance--MITACS Accelerate grant (ALLRP 580632-22) entitled ``New Order of Risk Management: Theory and Applications in the Era of Systemic Risk'' from the Natural Sciences and Engineering Research Council (NSERC) of Canada and the national research organization Mathematics of Information Technology and Complex Systems (MITACS) of Canada, as well as by the NSERC Discovery Grant RGPIN-2022-04426.}}
\author[,1,2]{Nadezhda V. Gribkova \thanks{e-mail: \href{mailto:n.gribkova@spbu.ru}{n.gribkova@spbu.ru}}}
\author[,3]{Mengqi Wang \thanks{e-mail: \href{mailto:mwan259@uwo.ca}{mwan259@uwo.ca}}}
\author[,3]{Ri\v{c}ardas Zitikis \thanks{Corresponding author; e-mail: \href{mailto:rzitikis@uwo.ca}{rzitikis@uwo.ca}}}
\affil[1]{\normalsize  Saint Petersburg State University, Saint Petersburg, 199034 Russia}
\affil[2]{\normalsize  Emperor Alexander I Saint Petersburg State Transport University, \break Saint Petersburg, 190031 Russia}
\affil[3]{\normalsize Western University, London, Ontario N6A 5B7, Canada}
\date{\vspace{-1mm}\normalsize\today}
\begin{document}

\maketitle 

\vspace{-7mm}

{\small 
\noindent 
\textbf{Abstract.}
We present a general non-parametric statistical inference theory for integrals of quantiles without assuming any specific sampling design or dependence structure. Technical considerations are accompanied by examples and discussions, including those pertaining to the bias of empirical estimators. To illustrate how the general results can be adapted to specific situations, we derive -- at a stroke and under minimal conditions -- consistency and asymptotic normality of the empirical tail-value-at-risk, Lorenz and Gini curves at any probability level in the case of the simple random sampling, thus facilitating a comparison of our results with what is already known in the literature. Results, notes and references concerning dependent (i.e., time series) data are also offered. As a by-product, our general results provide new and unified proofs of large-sample properties of a number of classical statistical estimators, such as trimmed means, and give additional insights into the origins of, and the reasons for, various necessary and sufficient conditions.  

\medskip

\noindent
{\it Key words and phrases}: integrated quantiles, expected shortfall, tail value at risk, Lorenz curve, Gini curve, trimmed mean, $L$-statistic, distortion risk measure, time series, $S$-mixing, $M$-mixing.  
}

\newpage 

{\hypersetup{linkcolor=blue} \small \baselineskip 12.5pt \tableofcontents}
\newpage 

\section{Introduction}
\label{intro}

Although well understood, developing statistical inference for quantiles, denoted by $F^{-1}(p)$ for various $p\in [0,1]$ and known as values-at-risk in banking and insurance, is a challenging task. This is due to the fact that empirical quantiles are order statistics \citep[e.g.,][]{ABN2008,DN2003}, unlike empirical cumulative distribution functions (cdf's) that are sums of (Bernoulli) random variables and can therefore be easily tackled using classical techniques of mathematical statistics and probability theory \citep[e.g.,][]{s2017}. Inevitably, therefore, in the case of quantiles, quite restrictive assumptions on the population cdf $F$ arise. For example, consistency of the empirical quantile requires continuity of the population quantile function at the specified probability level, while asymptotic normality requires conditions on the underlying probability density function (pdf) at the specified quantile \citep[e.g.][]{s1980,s2003}.

In many applications that arise in economics, finance, and insurance, quantiles are often integrated. Visualized in Figure~\ref{curves} 
\begin{figure}[h!]
    \centering
    \begin{subfigure}[b]{0.43\textwidth}
        \includegraphics[width=\textwidth]{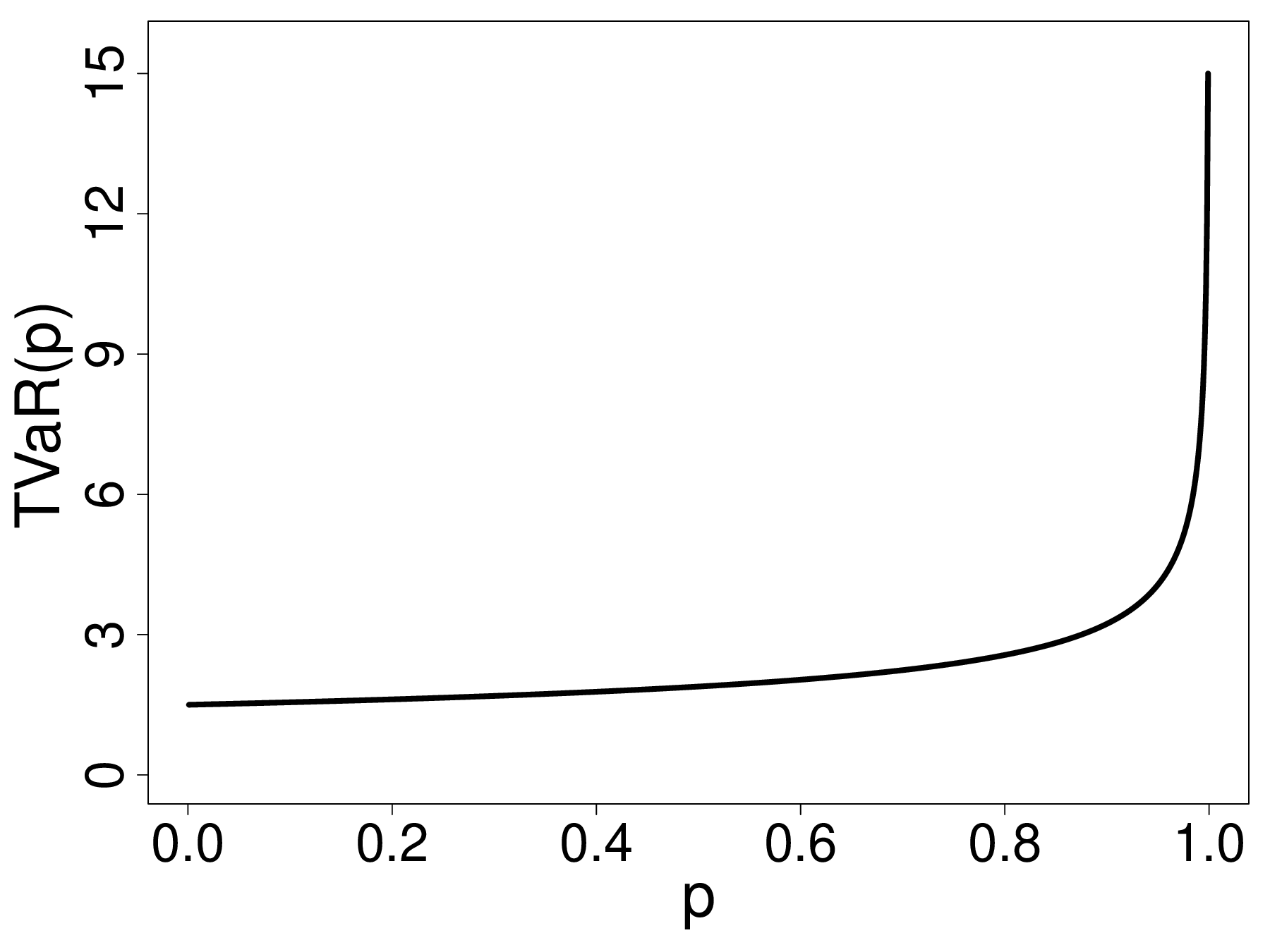}
        \caption{Upside tail-value-at-risk $\ATVaR(p)$.}
        \label{fig:ATVaR}
    \end{subfigure}
\qquad
    \begin{subfigure}[b]{0.43\textwidth}
        \includegraphics[width=\textwidth]{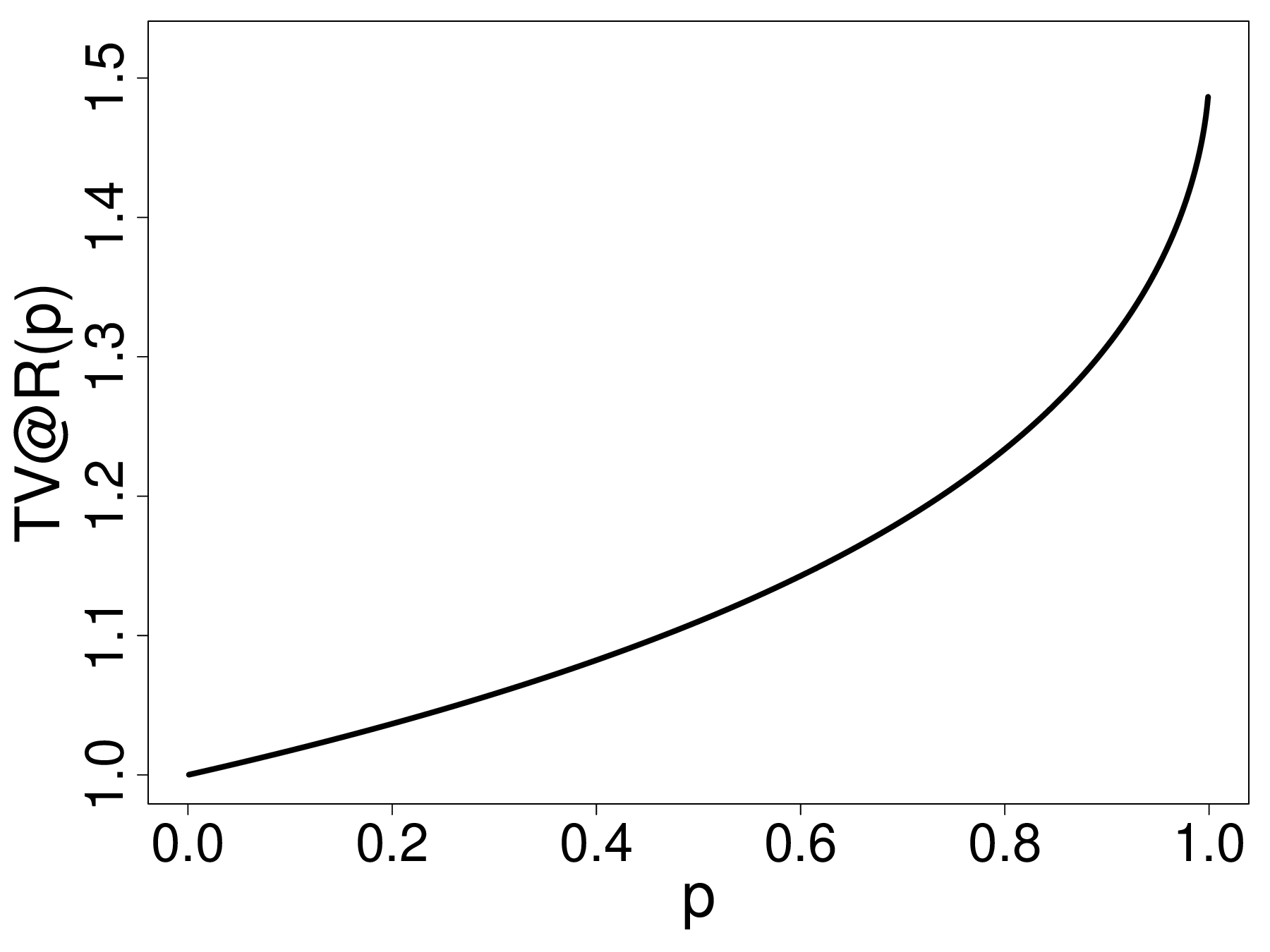}
        \caption{Downside tail-value-at-risk $\FTVaR(p)$.}
        \label{fig:FTVaR}
    \end{subfigure}
\\ \vspace*{5mm}
    \begin{subfigure}[b]{0.43\textwidth}
        \includegraphics[width=\textwidth]{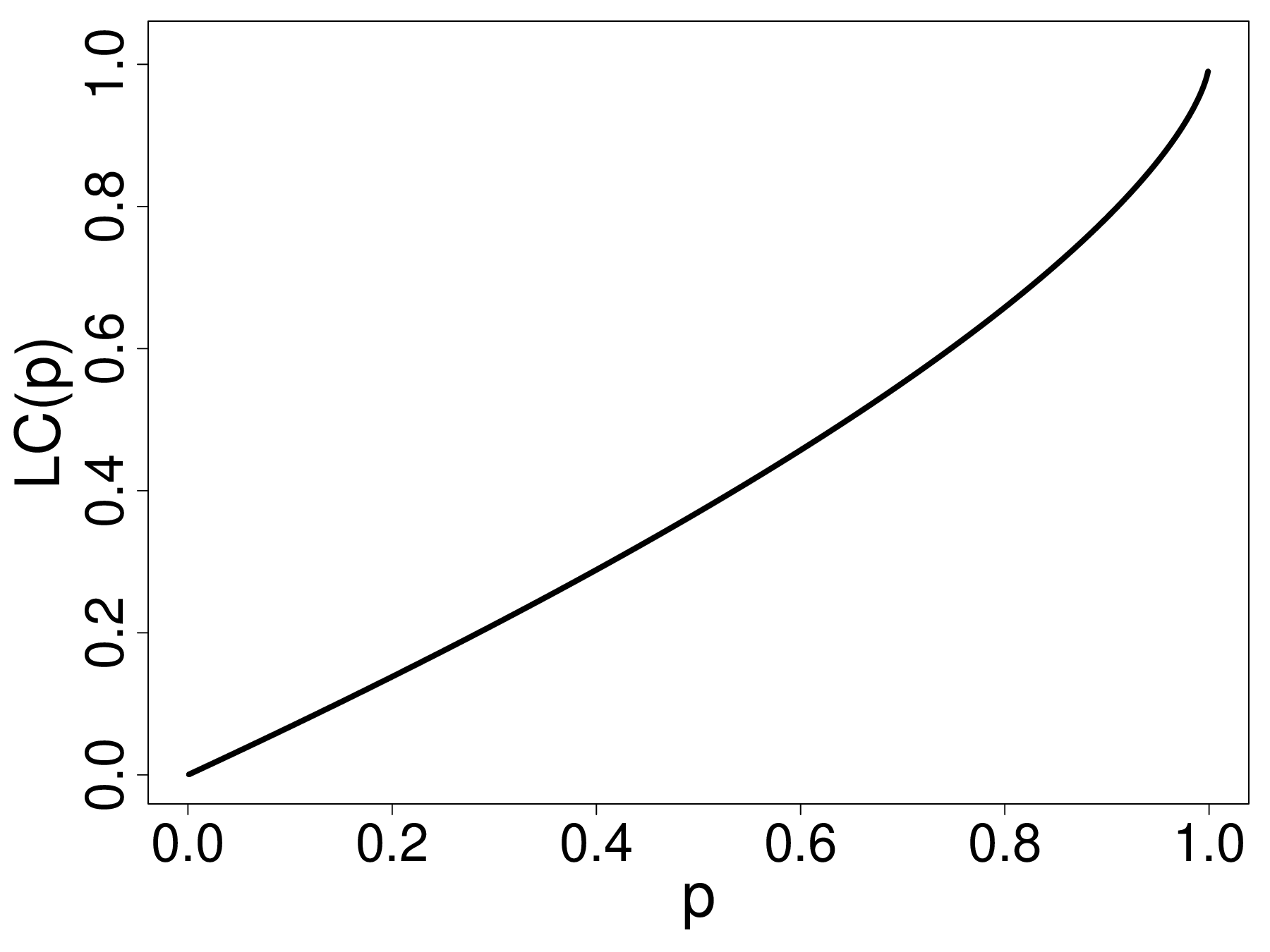}
        \caption{Lorenz curve $\LC(p)$.}
        \label{fig:LC}
    \end{subfigure}
\qquad
    \begin{subfigure}[b]{0.43\textwidth}
        \includegraphics[width=\textwidth]{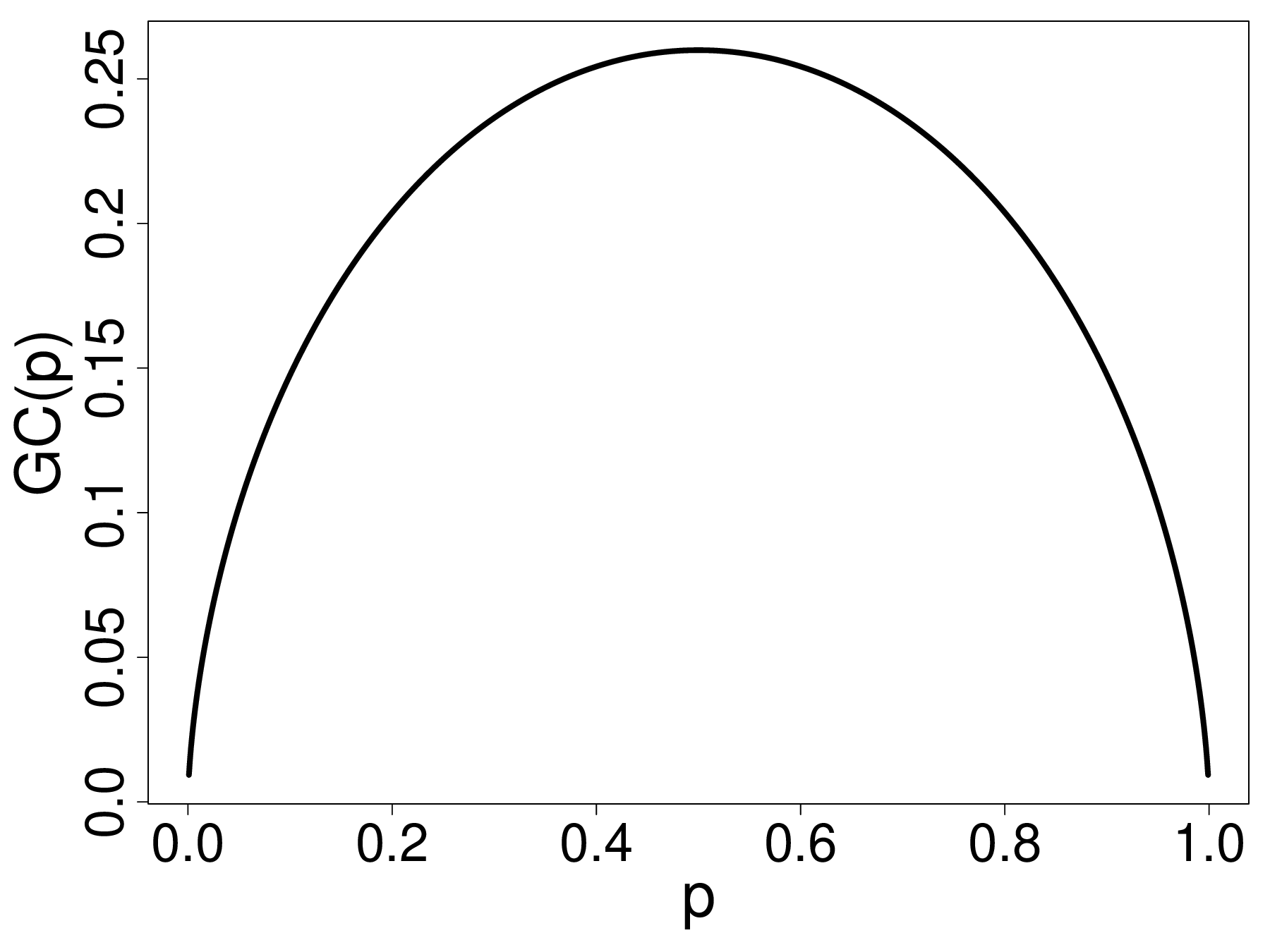}
        \caption{Gini curve $\GC(p)$.}
        \label{fig:GC}
    \end{subfigure}
    \caption{Four illustrative curves, whose definitions are based on integrated quantiles,  depicted here in the case of a Pareto distribution.}
    \label{curves}
\end{figure}
in the case of a Pareto distribution (specifics in Section~\ref{examples}) are four illustrative  examples of such integrals and their combinations.  Namely, with $p$ running through the unit interval $(0,1)$, they are
the upside tail-value-at-risk
\begin{equation}\label{ill-ATVaR-0}
\ATVaR(p)={1\over 1-p}\int_p^1 F^{-1}(u)\dd u , 
\end{equation} 
the downside tail-value-at-risk
\begin{equation}\label{ill-FTVaR-0}
\FTVaR(p)={1\over p}\int_0^p F^{-1}(u)\dd u ,  
\end{equation}
the Lorenz curve 
\begin{equation}\label{ill-Lorenz-0}
\LC(p)={1\over \mu }\int_0^p F^{-1}(u)\dd u , 
\end{equation}
and the Gini curve 
\begin{equation}\label{ill-Gini-0}
\GC(p) = {1\over \mu } 
\left( \int_{1-p}^1 F^{-1}(u)\dd u - \int_0^p F^{-1}(u)\dd u \right) ,   
\end{equation}
where 
\begin{equation}\label{ill-mean-0}
\mu=\int_0^1 F^{-1}(u)\dd u 
\end{equation}
is the mean of the population cdf $F$. Of course, the mean could serve a fifth illustrative example, but it is trivial in the context of the present paper and, therefore, is viewed here only as an auxiliary quantity. We shall give more details on the four measures of risk and economic inequality in Section~\ref{examples}. In Section~\ref{sect-L-functional}, we shall discuss more complex (i.e., distorted or weighted) integrals of quantiles, known in statistics as $L$-integrals, and show how they can be reduced to those of the type that we see in the above examples. 

Since, generally speaking, integrals are linear functionals of their integrands, which are quantiles in our case, researchers often use the aforementioned asymptotic results for quantiles and then apply the continuous mapping theorem \citep[e.g.,][]{b1999} to derive desired results for integrated quantiles \citep[e.g.,][]{cch1986,BFWW22,hww2024}. This approach, however, involves the aforementioned assumptions on the population pdf, although we shall soon see that such assumptions are unnecessary for integrated quantilies; even the very existence of pdf is unnecessary.

Indeed, when developing an asymptotic theory for the Lorenz curve, whose definition is based on integrated quantiles as pointed out by \citet{g1971}, \citet{cs1996} observed that neither the original formulation of the problem nor the obtained large-sample asymptotic distribution requires the existence of pdf, let alone assumptions on it. Based on the observation, they set out to find a path that would lead to  desired asymptotic results without involving pdf's. They succeeded in achieving this goal by introducing a technical tool that they called the Vervaat process, named after the Dutch mathematician Wim Vervaat, whose pioneering results \citep{v1972a,v1972b} on a combination of the uniform on $[0,1]$ quantile and empirical processes  served an inspiration. We refer to \citet{z1998} for details and references on the topic. 

\begin{note}\label{note-0} 
The origin of the research path taken by \citet{cs1996} was humble: it was the equation 
\begin{equation}\label{eq-0}
\int_0^1 \big( G^{-1}(u)-F^{-1}(u) \big) \dd u 
=\int^{\infty }_{-\infty}  \big( F(x)- G(x)\big) \dd x 
\end{equation}
that is known to hold (see Appendix~\ref{proof-eq-0} for details) for every pair of random variables $X$ and $Y$ with finite first moments, where $F$ is the cdf of $X$ (shorthanded as $X\sim F$) and $G$ is the cdf of $Y$ (shorthanded as $Y\sim G$), and $F^{-1}$ and $G^{-1}$ are the corresponding quantile functions. For example, the quantile function $F^{-1}$ of $X$ is given by the equation 
\begin{equation}\label{qq-0}
F^{-1}(u)=\inf\{ x \in \mathbb{R} ~:~ F(x)\ge u\}
\end{equation}
for all $u\in (0,1]$, while at the point $u=0$ it is defined as the right-hand limit 
\begin{equation}\label{qq-00}
F^{-1}(0)=\lim_{u\downarrow 0} F^{-1}(u). 
\end{equation} 
(Mathematicians would call $F^{-1}$ the left-continuous generalized inverse of $F$.)  
Since the quantile function is not the ordinary inverse of $F$, as such may not exist unless the cdf is continuous and strictly increasing, technical difficulties and even overlooks do arise \citep[e.g.][and references therein]{w2023}. Note also that $F^{-1}(0)$ and $F^{-1}(1)$ are the two endpoints (finite or infinite) of the support of the cdf $F$, which is the smallest closed-in-$\mathbb{R}$ interval that contains all the points $x \in \mathbb{R}$ such that $ F(x)\in (0,1)$. 
\end{note}

A long series of research articles by various authors followed \citet{cs1996}, exploring theoretical and empirical aspects of integrated quantiles \textit{without} involving pdf's. Recently, \citet{wz2023} suggested an extension of the theory initiated by \citet{cs1996}. The current paper takes these developments even further by establishing asymptotic results for integrated quantiles without assuming any specific sampling design or dependence structure between the underlying random variables. In fact, the main results are formulated for generic cdf's $F$ and approximating sequences $F_n$, $n\in \mathbb{N}$, of cdf's, which may or may not be random, depending on the problem.

\section{An overview}

We have organized the rest of the paper as follows. In Section~\ref{sect-upper}, we develop a general statistical inference theory for the integral 
\begin{equation}\label{int-upper}
\int_p^1 F^{-1}(u)\dd u  
\end{equation}
of quantiles over the upper-most layer $(p,1)$ of probabilities, where $p\in (0,1)$ is a fixed boundary probability. We call integral~\eqref{int-upper} the upper-layer integral. It plays a particularly prominent role in insurance, where loss random variables are often non-negative, and thus the right-hand tails of their distributions become of particular concern. Risk measures such as the expected shortfall (ES) and its sister risk measure called the ``upside'' tail-value-at-risk (TVaR) arise \citep[e.g.,][]{ddgk2005}, which in turn lead to considerations of more general integrals called distorted expectations \citep[e.g.,][and references therein]{dklt2012}. We shall rigorously define and discuss TVaR later in this paper (see Example~\ref{illustration-1} in particular).

In Section~\ref{sect-upper-sim} we shall offer several simulated experiments that will clarify some of the statistical properties of the empirical upper-layer integral.

Before we delve into the topics of the following sections, we present several notes that are useful for understanding and appreciating the results of this paper, and in particular the assumptions under which the results will be established.

\begin{note}\label{note-1a}
When $p=0$, integral~\eqref{int-upper} reduces to the mean $\mu $  
of the cdf $F$, given by equation~\eqref{ill-mean-0}. Statistical inference for $\mu $ under various sampling designs can, of course, be developed without invoking the existence of the pdf of $F$. When $p=1$, integral~\eqref{int-upper} vanishes. Hence, by restricting ourselves to $p\in (0,1)$ we are just excluding the two statistically trivial cases $p=0$ and $p=1$. 
\end{note}

\begin{note}\label{note-1aa}
There is a deeper reason for separating the boundary case  $p=0$ from $p\in (0,1)$. First, when $p=0$, in the case of the simple random sampling (SRS), that is, when we work with $n$ independent and identically distributed (iid) random variables, the empirical estimator of $\mu $ is an asymptotically normal estimator under the only requirement that the second moment of the underlying random variable is finite. No assumption related to continuity of the cdf $F$ at any point of the real line is needed. In the case $p\in (0,1)$, however, 
for an empirical estimator of the upper-layer integral $\int_p^1 F^{-1}(u)\dd u $ to be asymptotically normal, it is necessary to assume that the quantile function $F^{-1}$ is continuous at the point $p$. The necessity of this condition is demonstrated by \citet{s1973} when the empirical estimator is a trimmed mean based on SRS. This assumption is also needed for our general results as they absorb the SRS-based trimmed mean as a special case. No such condition, however, is required for \textit{consistency} of the empirical estimator  of the upper-layer integral $\int_p^1 F^{-1}(u)\dd u $. We shall illustrate this phenomenon with a simulated example in Section~\ref{sect-upper-sim}. 
\end{note}

\begin{note}\label{note-quant} 
The aforementioned continuity of the quantile function $F^{-1}$ at the point $p$ permeates the entire paper. In our minds, however, we rarely visualize distributions in terms of quantiles -- we often think of them in terms of cdf's. For this reason it is beneficial to follow \citet{s2003} and reformulate the continuity of $F^{-1}$ at the point $p \in (0,1)$ in the form of the bounds 
\begin{equation}\label{cont-quantile}
F\left(F^{-1}(p)-\varepsilon\right) < p < F\left(F^{-1}(p)+\varepsilon\right)
\end{equation}
that need to be satisfied for every $\varepsilon >0$. 
Figure~\ref{figure-gap-0} 
\begin{figure}[h!]
    \centering
    \begin{subfigure}[b]{0.43\textwidth}
        \includegraphics[width=\textwidth]{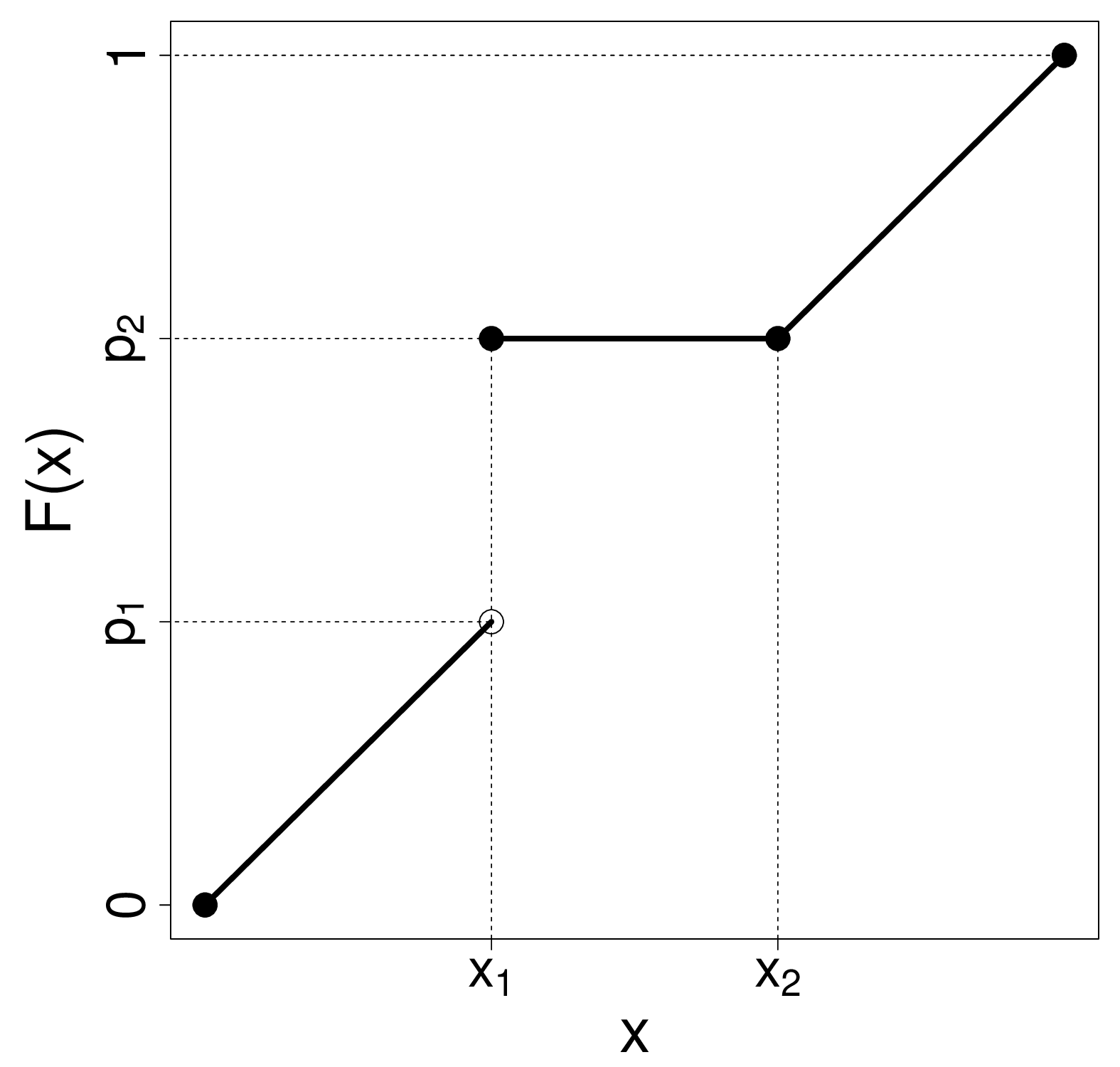}
        \caption{Cdf: The only $p\in (0,1)$ that fails condition~\eqref{cont-quantile} is $p=p_2$, provided that $x_1<x_2$. When $x_1=x_2$ (no gap), every $p\in (0,1)$ satisfies the condition.}
        \label{fig:CDF-new}
    \end{subfigure}
\qquad
    \begin{subfigure}[b]{0.43\textwidth}
        \includegraphics[width=\textwidth]{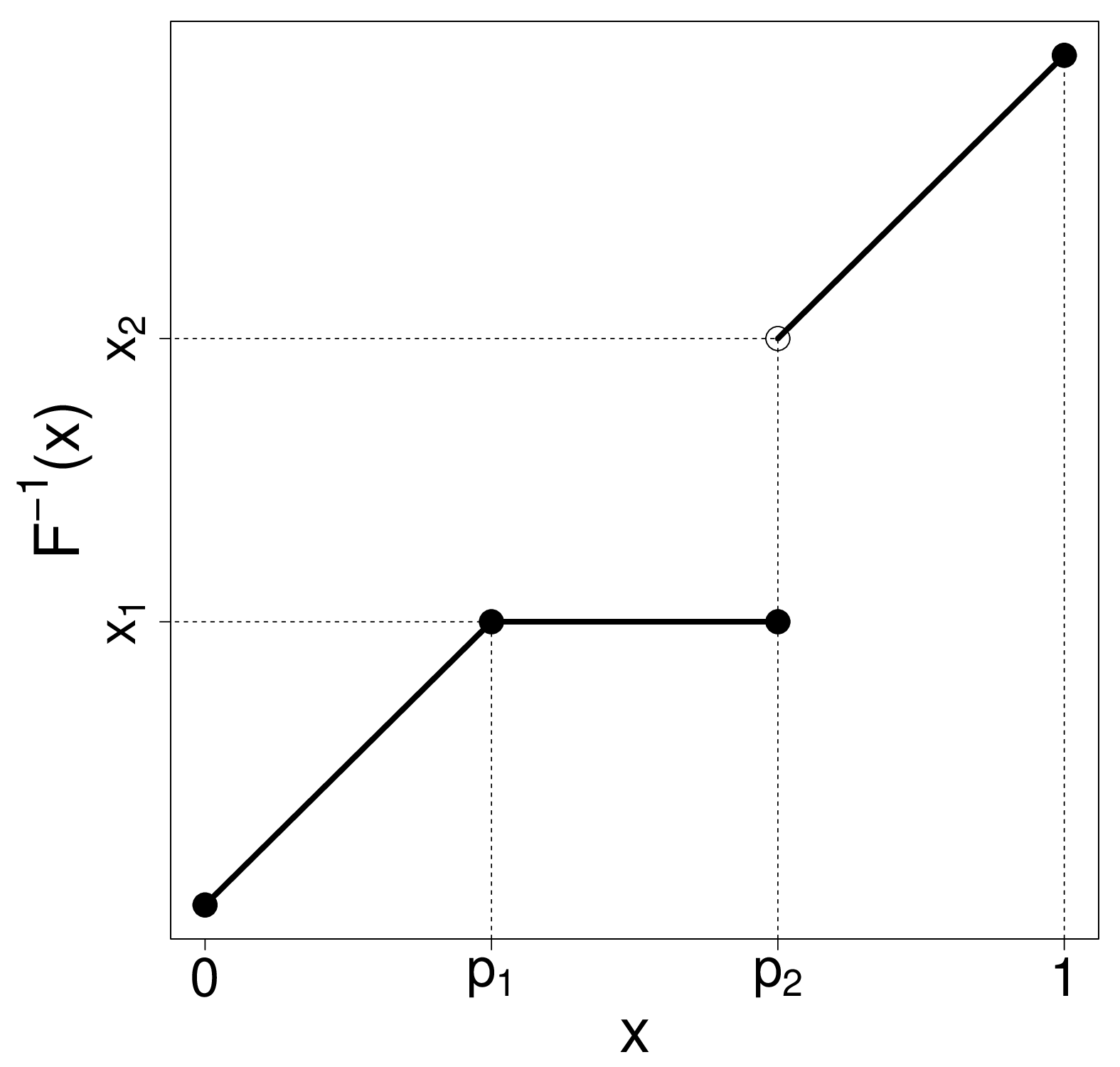}
        \caption{Quantile function: The only discontinuity of $F^{-1}$ is at $p=p_2$, provided that $x_1<x_2$. When $x_1=x_2$ (no jump), the quantile function is continuous.}
        \label{fig:QF-new}
    \end{subfigure}
    \caption{An illustrative cdf and its quantile function with the boundary points $p_1\le p_2 $ and  $x_1\le x_2$ of the distributional jump and gap, respectively, where $x_1=F^{-1}(p_2)$.}
    \label{figure-gap-0}
\end{figure}
illustrates the reformulation. Note in the figure that $F$ is strictly increasing at the point $F^{-1}(p)$ for every $p\in (0,1)$ except when $p=p_2$, in which case there is a flat region adjacent to $x_1=F^{-1}(p_2)$ and, therefore, the quantile function has a jump of size $x_2-x_1$ at the point $p_2$. 
Given this explanation, for the sake of brevity, throughout the rest of the paper we follow \citet{s1973} and formulate the condition simply as continuity of the quantile function $F^{-1}$ at the point $p$. 
\end{note}

In Section~\ref{sect-lower}, we develop a general statistical inference theory for the integral 
\begin{equation}\label{int-lower}
\int_0^p F^{-1}(u)\dd u  
\end{equation}
of quantiles over the lower-most layer $(0,p)$ of probabilities, where $p\in (0,1)$ is a fixed boundary probability. Throughout the current paper we call integral~\eqref{int-lower} the lower-layer integral, although in the literature the integral is often called the generalized (or absolute) Lorenz curve \citep[e.g.,][and references therein]{dz2002,hz2005,dksz2007} to reflect the fact that by dividing the integral by the mean $\mu $ of $F$, we obtain the Lorenz curve (see Example~\ref{illustration-3}) that permeates the literature on economic inequality. Integral~\eqref{int-lower} also plays a significant role in finance, because financial losses are usually modelled using negative random variables. Risk measures such as the ``downside'' tail-value-at-risk (TV@R) arise \citep[e.g.,][]{pr2007}.  Its formula with an accompanying discussion will be provided in Example~\ref{illustration-2}.

\begin{note}\label{note-1ab}
Analogous observations to those made in Notes~\ref{note-1a} and~\ref{note-1aa} apply to integral~\eqref{int-lower} as well: first, we exclude the statistically trivial cases $p=0$ and $p=1$, and second, asymptotic normality of the empirical estimator when $p\in (0,1)$, unlike in the case $p=1$ that gives the mean $\mu $, requires the quantile function $F^{-1}$ to be continuous at the point $p$, as shown by \citet{s1973} in the case of trimmed mean arising from SRS.  No such condition is required for consistency of the estimator of the lower-layer integral.  Details are in Section~\ref{sect-lower}. 
\end{note}

There are also problems, like those associated with insurance layers \citep[e.g,][]{w1996},  when aggregating middle quantiles is of interest. Looking from a different angle, there are also problems, like those concerning robust parameter estimation \citep{bjz2009,zbg2018}, when chopping off, or tempering, a certain percentage of extreme (smallest and largest) quantiles during moment calculations becomes  warranted, thus naturally leading to integrals of moderate (i.e., middle) quantiles. Hence, in Section~\ref{sect-middle} we develop a general statistical inference theory for the integral 
\begin{equation}\label{int-middle}
\int_{p_1}^{p_2} F^{-1}(u)\dd u 
\end{equation}
of quantiles over the middle layer $(p_1,p_2)$ of probabilities, where $0<p_1<p_2<1$ are fixed boundary probabilities. We call integral~\eqref{int-middle} the middle-layer integral. 

\begin{note}\label{note-1b}
When $p_1=0$ and $p_2=1$, integral~\eqref{int-middle} reduces to the mean $\mu$ of the cdf $F$. When $p_1=0<p_2<1$, integral~\eqref{int-middle} reduces to integral~\eqref{int-lower}, and when $0<p_1<p_2=1$, integral~\eqref{int-middle} reduces to integral~\eqref{int-upper}. Hence, by restricting our study of the middle-layer integral to the case  $0<p_1<p_2<1$ only, we are not losing anything from the statistical point of view. 
\end{note}

\begin{note}\label{note-1ac}
Asymptotic normality of the empirical estimator when $0<p_1<p_2<1$ requires the quantile function $F^{-1}$ to be continuous at the points $p_1$ and $p_2$, as seen from \citet{s1973} in the case of trimmed mean arising from SRS. No such condition is required for consistency of the estimator.  Details are in Section~\ref{sect-middle}. 
\end{note}

When modeling both profits and losses (so-called P\&L variables), the underlying population distribution may span the entire real line $\mathbb{R}$, and thus both upper- and lower-layer integrals~\eqref{int-upper} and~\eqref{int-lower} may show up simultaneously. There are also problems that give rise to various combinations of the three integrals introduced above, and one of such instances is the Gini curve (GC), whose formula with an accompanying discussion will be provided in Example~\ref{illustration-4}. The general results that we shall develop in the following sections are readily applicable to such combinations as well.

As we have already alluded to, in Section~\ref{examples} we shall illustrate asymptotic results that can be obtained for the upside and downside tail-values-at-risk, as well as for the Lorenz and Gini curves. To maximally simplify the illustrations, yet to maintain their informative nature, we shall use SRS, that is, we shall work with iid random variables. We stress, however, that this choice of the sampling design is only to facilitate illustrations of our general results, which do not rely on any particular sampling design or dependence structure.

In Section~\ref{time-series-data} we shall show how our general results can be used in the case of time series data. In particular, we shall show that all that needs to be done is: 
\begin{itemize} 
\item 
to verify \textit{uniform} convergence, as well as the rate of \textit{uniform} convergence, of approximating empirical cdf's to the population cdf; 
\item 
to establish convergence of empirical \textit{integrated} cdf's to the corresponding \textit{integrated} population cdf's; 
\item 
to determine limiting distributions. 
\end{itemize}
We shall implement this program in the case of two very broad classes of stationary time series, whose dependence structures are called $S$- and $M$-mixing.

Section~\ref{sect-L-functional} connects the results of previous sections with research areas such as $L$-statistics, distortion and spectral risk measures.  Section~\ref{conclusion} concludes the paper with a brief summary. Longer proofs, accompanying lemmas, and some cursory technicalities are in Appendix~\ref{appendix}.

\section{The upper-layer integral}
\label{sect-upper}

In this section we develop statistical inference for the upper-layer integral $\int_p^1 F^{-1}(u)\dd u  $ at any fixed lower probability level 
\[
p\in (0,1).
\]
Of course, we need to assume that $F$ is from the class  $\mathcal{F}^{+}_{1}$ of those cdf's for which the upper quantiles are integrable, that is, the integral $\int_p^1 F^{-1}(u)\dd u  $ is finite. Note that this condition is the same as to require that the positive part $X^{+}:=X \vee 0$ of the random variable $X\sim F$ has a finite first moment $\mathbb{E}(X^{+})<\infty $.

The functional 
\[
\mathcal{F}^{+}_{1} \ni F \mapsto \int_p^1 F^{-1}(u)\dd u \in \mathbb{R} 
\]
is not linear, which makes it challenging to derive statistical inference. It appears, however, that when developing statistical inference, the functional can be approximated by the linear one
\begin{equation}\label{linear-upper}
\mathcal{F}^{+}_{1} \ni G \mapsto  \int^{\infty }_{F^{-1}(p)}  \big( 1- G(x)\big) \dd x \in \mathbb{R} . 
\end{equation}
Indeed, we shall see from the following corollaries that the difference between the empirical upper-layer integral $\int_p^1 F_n^{-1}(u)\dd u$ and its population counterpart $\int_p^1 F^{-1}(u)\dd u$ gets asymptotically close to the difference between $\int^{\infty }_{F^{-1}(p)}  \big( 1- F_n(x)\big) \dd x $ and $\int^{\infty }_{F^{-1}(p)}  \big( 1- F(x)\big) \dd x $, where $F_n \in \mathcal{F}^{+}_{1}$, $n \in \mathbb{N}$, are cdf's approaching $F$ when the parameter $n$ grows indefinitely.  

\begin{note} 
If $F_n$ is a deterministic cdf, that is, $F_n: \mathbb{R} \to [0,1]$, then the meaning of statements like $F_n \in \mathcal{F}^{+}_{1}$  and $\int^{\infty }_{F^{-1}(p)}  \big( 1- F_n(x)\big) \dd x \in \mathbb{R} $ is clear. 
If, however, $F_n$ is a random cdf, that is, $F_n: \Omega \times \mathbb{R} \to [0,1]$, where $\Omega $ denotes the sample space, then $F_n \in \mathcal{F}^{+}_{1}$ means that, for almost all $\omega \in \Omega $, the deterministic cdf $F_n(\omega , \cdot): \mathbb{R} \to [0,1]$ is an element of $\mathcal{F}^{+}_{1}$. Likewise, $\int^{\infty }_{F^{-1}(p)}  \big( 1- F_n(x)\big) \dd x \in \mathbb{R} $ means $\int^{\infty }_{F^{-1}(p)}  \big( 1- F_n(\omega ,x)\big) \dd x \in \mathbb{R} $ for almost all $\omega \in \Omega $. Furthermore, to enable ourselves to calculate, e.g., the expected values of integrals like $\int_p^1 F_n^{-1}(u) \dd u $ and $\int^{\infty }_{F^{-1}(p)}  \big( 1- F_n(x)\big) \dd x $, we need joint measurability of $F_n: \Omega \times \mathbb{R} \to [0,1]$. We do not emphasize these subtle properties in the paper because they are automatically satisfied by empirical cdf's arising from $n$ random variables $X_1,\ldots, X_n$, and all random cdf's in this paper are of such type. Note in this regard, for example, that $\int_0^1 F_n^{-1}(u) \dd u \in \mathbb{R} $ is equivalent to $(X_1+\cdots + X_n)/n\in \mathbb{R} $, and the latter statement is, of course, true almost surely. Nevertheless, it is prudent to keep a watchful eye on all these matters. 
\end{note}

The aforementioned corollaries rely on the following fundamental theorem, whose proof is given in Appendix~\ref{appendix-2}. 

\begin{theorem}\label{theorem-0upper}
Let $F$ and $G$ be any two cdf's from $\mathcal{F}^{+}_{1}$. Then 
\begin{equation}\label{eq-1upper}
\int_p^1 \big( G^{-1}(u)-F^{-1}(u) \big) \dd u 
=\int^{\infty }_{F^{-1}(p)}  \big( F(x)- G(x)\big) \dd x - \Rem(p;F,G), 
\end{equation}
where the remainder term 
\begin{equation}\label{qq-3upper} 
\Rem(p;F,G) 
=\int^{F^{-1}(p)}_{G^{-1}(p)}  \big( G(x)-p\big) \dd x 
\end{equation}
is non-negative, that is, $\Rem(p;F,G)\ge 0$, and satisfies the bounds 
\begin{align}
\Rem(p;F,G) 
&\le \int^{F^{-1}(p)}_{G^{-1}(p)}  \big( G(x) - F(x)\big) \dd x  
\label{r-prop-0upper}
\\
&\le  \big| G^{-1}(p)-F^{-1}(p)\big|~\sup_{x\in \mathbb{R}} \big| G(x) - F(x)\big| . 
\label{r-prop-1upper}
\end{align}
\end{theorem}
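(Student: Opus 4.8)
The plan is to reduce the identity to a single ``layer-cake'' computation that turns an integral of a quantile function into an integral of the corresponding cdf, and then to subtract the $F$- and $G$-versions of it while anchoring \emph{both} computations at the common point $a:=F^{-1}(p)$. The only elementary input is that every $y\in\mathbb{R}$ satisfies $y=a+\int_a^\infty\mathbf 1\{y>x\}\,\dd x-\int_{-\infty}^a\mathbf 1\{y\le x\}\,\dd x$, which collapses to $y=a+\int_a^\infty\mathbf 1\{y>x\}\,\dd x$ whenever $y\ge a$. Writing $y=F^{-1}(u)$ (resp.\ $G^{-1}(u)$), using the equivalences $F^{-1}(u)>x\Leftrightarrow u>F(x)$ and $F^{-1}(u)\le x\Leftrightarrow u\le F(x)$, and swapping the order of integration by Tonelli (all integrands are non-negative, and joint measurability of the indicators is automatic, so no integrability hypothesis is needed for the swap), one converts quantile integrals into cdf integrals. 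Here I will also use the standard facts that $F(x)\ge p$ for $x\ge F^{-1}(p)$ and $F(x)<p$ for $x<F^{-1}(p)$, and likewise for $G$.

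The first step is the single-distribution identity
\[
\int_p^1 F^{-1}(u)\,\dd u=(1-p)F^{-1}(p)+\int_{F^{-1}(p)}^\infty\big(1-F(x)\big)\,\dd x,
\]
obtained by plugging $y=F^{-1}(u)\ge F^{-1}(p)$ into the collapsed representation, integrating over $u\in(p,1)$, swapping orders, and using $F(x)\ge p$ on $[F^{-1}(p),\infty)$; finiteness of the right-hand side is exactly $F\in\mathcal{F}^{+}_{1}$. Applying instead the \emph{two-sided} representation with the same anchor $a=F^{-1}(p)$ to $y=G^{-1}(u)$ -- both pieces now survive, since $G^{-1}(u)$ may fall below $F^{-1}(p)$ -- and swapping orders again yields
\[
\int_p^1 G^{-1}(u)\,\dd u=(1-p)F^{-1}(p)+\int_{F^{-1}(p)}^\infty\big(1-\max(p,G(x))\big)\,\dd x-\int_{-\infty}^{F^{-1}(p)}\big(G(x)-p\big)^{+}\,\dd x.
\]
Subtracting the two displays and using $1-\max(p,G(x))=(1-G(x))-(p-G(x))^{+}$ gives \eqref{eq-1upper} with
\[
\Rem(p;F,G)=\int_{F^{-1}(p)}^\infty\big(p-G(x)\big)^{+}\,\dd x+\int_{-\infty}^{F^{-1}(p)}\big(G(x)-p\big)^{+}\,\dd x .
\]

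It remains to bring this remainder to the form \eqref{qq-3upper} and to prove the asserted inequalities. Since $(p-G(x))^{+}$ vanishes for $x\ge G^{-1}(p)$ and $(G(x)-p)^{+}$ vanishes for $x<G^{-1}(p)$, a two-line case analysis -- according as $G^{-1}(p)\le F^{-1}(p)$ or $G^{-1}(p)>F^{-1}(p)$ -- kills one of the two integrals and turns the other into $\int_{G^{-1}(p)}^{F^{-1}(p)}(G(x)-p)\,\dd x$ (with the usual convention when the limits are reversed); on the relevant interval the integrand has the sign that makes $\Rem(p;F,G)\ge0$, and the same sign considerations make $\int_{G^{-1}(p)}^{F^{-1}(p)}(F(x)-p)\,\dd x\le0$ (because $F(x)<p$ to the left of $F^{-1}(p)$ and $F(x)\ge p$ to the right), which is precisely the rearranged form of \eqref{r-prop-0upper}. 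Finally, the middle quantity in \eqref{r-prop-0upper} is non-negative (it is $\ge\Rem(p;F,G)\ge0$), hence equals its absolute value, which is at most $|F^{-1}(p)-G^{-1}(p)|$ times $\sup_x|G(x)-F(x)|$; this is \eqref{r-prop-1upper}. I expect the only place demanding genuine care to be the bookkeeping of generalized inverses and the orientation of the interval $[G^{-1}(p),F^{-1}(p)]$ across the case split; there is no analytic obstacle, and in particular no continuity assumption on $F$ or $F^{-1}$ enters anywhere.
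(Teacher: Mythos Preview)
Your proof is correct. The difference from the paper's argument lies in where you anchor the $G$-computation. The paper applies the single-distribution identity $\int_p^1 H^{-1}(u)\,\dd u=(1-p)H^{-1}(p)+\int_{H^{-1}(p)}^\infty(1-H(x))\,\dd x$ to $F$ and $G$ \emph{separately} (so the $G$-integral is anchored at $G^{-1}(p)$), subtracts, and then shifts the lower limit of the $G$-integral from $G^{-1}(p)$ to $F^{-1}(p)$; the leftover from that shift, combined with $(1-p)(F^{-1}(p)-G^{-1}(p))$, is the remainder. You instead anchor \emph{both} computations at $a=F^{-1}(p)$ from the start, using the two-sided layer-cake for $G^{-1}(u)$. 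What your route buys is that the remainder appears immediately as $\int_{F^{-1}(p)}^\infty(p-G(x))^{+}\dd x+\int_{-\infty}^{F^{-1}(p)}(G(x)-p)^{+}\dd x$, so non-negativity is automatic without the case split the paper uses. What the paper's route buys is that the symmetric relation $\Rem(p;F,G)+\Rem(p;G,F)=\int_{G^{-1}(p)}^{F^{-1}(p)}(G(x)-F(x))\,\dd x$ falls out naturally (their Note following the theorem), which gives bound~\eqref{r-prop-0upper} in one line; you recover the same bound by the direct sign check on $\int_{G^{-1}(p)}^{F^{-1}(p)}(F(x)-p)\,\dd x$. Both arguments are elementary and impose no continuity on $F$ or $G$.
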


\begin{note}\label{note-rem}
The functional $(F,G)\mapsto \Rem(p;F,G)\in [0,\infty) $ is not symmetric. Note that its symmetrization $\Rem(p;F,G)+\Rem(p;G,F)$ is equal to the right-hand side of bound~\eqref{r-prop-0upper}, that is, 
\begin{equation}\label{note-symm} 
\Rem(p;F,G)+\Rem(p;G,F)= \int^{F^{-1}(p)}_{G^{-1}(p)}  \big( G(x) - F(x)\big) \dd x . 
\end{equation}
This equation implies bound~\eqref{r-prop-0upper} because the remainder term $\Rem(p;G,F)$ is non-negative, just like $\Rem(p;F,G)$ is.  
\end{note}

We are now ready to formulate and discuss several corollaries to Theorem~\ref{theorem-0upper}. Their formulations will involve approximating sequences $F_1,F_2,\ldots $ of cdf's, which may or may not be random, depending on the application. For example, we might be dealing with deterministic ``contaminated'' cdf's $F_n$ given by the equation  
\[
F_n(x)=(1-\delta_n)F(x)+\delta_n C(x), 
\] 
where $C$ is a (deterministic) cdf and $(0,1) \ni\delta_n\downarrow 0$ are contamination proportions. An obvious example of random cdf's would be the empirical cdf 
\[
F_{n}(x) = {1\over n} \sum_{i=1}^n \mathds{1}\{X_i\le x\}
\]
based on random variables $X_1,\dots, X_n$, where  $\mathds{1}\{X_i\le x\} $ is the indicator that takes value $1$ when the statement $X_i\le x$ is correct and $0$ otherwise.  Nevertheless, since every deterministic case is a special (i.e., degenerate) case of the random one, irrespective of the nature of the approximating cdf's $F_n$ we can always treat them as random functions, that is, defined on $\Omega \times \mathbb{R} $, where $\Omega $ is the sample space, given the underlying probability space $\{\Omega, \mathcal{A}, \mathbb{P}\}$. Hence, for example, the requirement $F_n\in \mathcal{F}^{+}_{1}$ that we shall see in the following corollary will mean that, almost surely, $F_n$ is an element of the class $\mathcal{F}^{+}_{1}$, that is, the integral $\int_p^1 F_n^{-1}(u) \dd u $  is finite almost surely, for every value of $p\in (0,1)$.

\begin{corollary}[consistency]\label{corollary-1upper} 
Suppose that $F\in \mathcal{F}^{+}_{1}$, and let $F_1,F_2,\ldots \in \mathcal{F}^{+}_{1}$ be any sequence of cdf's such that 
\begin{equation}
\sup_{x\in \mathbb{R}} \big| F_n(x) - F(x)\big| \stackrel{\mathbb{P}}{\to} 0 
\label{cond-0upper}
\end{equation}
when $n\to \infty $. Then 
\begin{equation}\label{eq-1aupper}
\int_p^1 F_n^{-1}(u) \dd u -\int_p^1 F^{-1}(u) \dd u 
=\int^{\infty }_{F^{-1}(p)}  \big( F(x)- F_n(x)\big) \dd x +o_{\mathbb{P}}(1) 
\end{equation}
and, therefore, the consistency statement 
\begin{equation}\label{eq-1aupper-a}
\int_p^1 F_n^{-1}(u) \dd u \stackrel{\mathbb{P}}{\to}\int_p^1 F^{-1}(u) \dd u 
\end{equation}
holds if and only if 
\begin{equation}\label{eq-1aupper-b}
\int^{\infty }_{F^{-1}(p)}  \big( 1- F_n(x)\big) \dd x \stackrel{\mathbb{P}}{\to} \int^{\infty }_{F^{-1}(p)}  \big( 1- F(x)\big) \dd x . 
\end{equation}
\end{corollary}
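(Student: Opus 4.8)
The plan is to reduce the whole statement to the identity and bounds already packaged in Theorem~\ref{theorem-0upper}. First I would apply that theorem with $G=F_n$; since $F\in\mathcal{F}^{+}_{1}$ and $F_n\in\mathcal{F}^{+}_{1}$ almost surely, this is legitimate (pointwise in $\omega$ when $F_n$ is random) and yields, almost surely, the exact identity
\[
\int_p^1 F_n^{-1}(u)\dd u-\int_p^1 F^{-1}(u)\dd u=\int_{F^{-1}(p)}^{\infty}\big(F(x)-F_n(x)\big)\dd x-\Rem(p;F,F_n).
\]
Comparing this with~\eqref{eq-1aupper}, everything comes down to showing $\Rem(p;F,F_n)=o_{\mathbb{P}}(1)$.

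For that I would invoke bound~\eqref{r-prop-1upper}, which gives $0\le\Rem(p;F,F_n)\le\big|F_n^{-1}(p)-F^{-1}(p)\big|\cdot\sup_{x\in\mathbb{R}}\big|F_n(x)-F(x)\big|$. The second factor is $o_{\mathbb{P}}(1)$ by hypothesis~\eqref{cond-0upper}, so it suffices to show the first factor is bounded in probability. This is the only delicate point, precisely because continuity of $F^{-1}$ at $p$ is \emph{not} assumed here, so I cannot claim $F_n^{-1}(p)\to F^{-1}(p)$ — only that $F_n^{-1}(p)$ cannot wander far. Concretely, fix any $\varepsilon_0\in(0,\min(p,1-p))$; on the event $\{\sup_{x\in\mathbb{R}}|F_n(x)-F(x)|\le\varepsilon_0\}$ one has, for every $x$, that $F_n(x)\ge p$ forces $F(x)\ge p-\varepsilon_0$ while $F(x)\ge p+\varepsilon_0$ forces $F_n(x)\ge p$; monotonicity of the generalized inverse then yields the deterministic sandwich $F^{-1}(p-\varepsilon_0)\le F_n^{-1}(p)\le F^{-1}(p+\varepsilon_0)$, with both endpoints finite because $p\pm\varepsilon_0\in(0,1)$. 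Since this event has probability tending to $1$, the quantity $|F_n^{-1}(p)-F^{-1}(p)|$ is $O_{\mathbb{P}}(1)$, hence $\Rem(p;F,F_n)=O_{\mathbb{P}}(1)\cdot o_{\mathbb{P}}(1)=o_{\mathbb{P}}(1)$, which establishes~\eqref{eq-1aupper}.

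To pass from~\eqref{eq-1aupper} to the stated equivalence, I would simply split the integrand as $F(x)-F_n(x)=(1-F_n(x))-(1-F(x))$ — the integrals $\int_{F^{-1}(p)}^{\infty}(1-F_n(x))\dd x$ and $\int_{F^{-1}(p)}^{\infty}(1-F(x))\dd x$ being finite since $F,F_n\in\mathcal{F}^{+}_{1}$ — so that the left-hand side of~\eqref{eq-1aupper-a} minus its right-hand side equals $\big(\int_{F^{-1}(p)}^{\infty}(1-F_n(x))\dd x-\int_{F^{-1}(p)}^{\infty}(1-F(x))\dd x\big)+o_{\mathbb{P}}(1)$. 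Therefore the left-hand side converges to $0$ in probability if and only if~\eqref{eq-1aupper-b} holds, which is exactly the claimed equivalence of~\eqref{eq-1aupper-a} and~\eqref{eq-1aupper-b}.

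The main obstacle is the middle step: extracting tightness of $F_n^{-1}(p)$ from uniform convergence of $F_n$ alone, with no local regularity of $F$ (equivalently, of $F^{-1}$) at $p$. Everything else is bookkeeping with the identity and bounds of Theorem~\ref{theorem-0upper}, and, as in the note preceding the corollary, I would not belabour the measurability of the random cdf's $F_n$, since in all our applications they are empirical cdf's of finitely many random variables.
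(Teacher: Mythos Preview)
Your proposal is correct and follows essentially the same route as the paper: apply Theorem~\ref{theorem-0upper} with $G=F_n$, then use bound~\eqref{r-prop-1upper} together with the asymptotic boundedness of $F_n^{-1}(p)$ to show $\Rem(p;F,F_n)=o_{\mathbb{P}}(1)$. The paper's proof simply asserts that ``$F_n^{-1}(p)$ is asymptotically bounded,'' whereas you spell out the underlying sandwich $F^{-1}(p-\varepsilon_0)\le F_n^{-1}(p)\le F^{-1}(p+\varepsilon_0)$ on the high-probability event, which is exactly the content behind that phrase.
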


\begin{proof} 
The corollary follows from Theorem~\ref{theorem-0upper} with $F_n$ instead of $G$, because $F^{-1}(p)$ is finite and $F_n^{-1}(p)$ is asymptotically bounded. Hence,    condition~\eqref{cond-0upper} implies $\Rem(p;F,G)=o_{\mathbb{P}}(1)$ due to bound~\eqref{r-prop-1upper}. This establishes statement~\eqref{eq-1aupper}, which implies the equivalence of statements~\eqref{eq-1aupper-a} and~\eqref{eq-1aupper-b}, and concludes the proof of Corollary~\ref{corollary-1upper}. 
\end{proof}

\begin{example}[SRS] \label{example-upper1}
Let $X_1,\dots, X_n$ be SRS, that is, iid random variables. The empirical cdf $ F_{n,\srs}$ is defined by the equation  
\[
 F_{n,\srs}(x) = {1\over n} \sum_{i=1}^n \mathds{1}\{X_i\le x\},
\]
which is the arithmetic mean of $n$ independent copies of the Bernoulli random variable  $\mathds{1}\{X\le x\} $ taking value $1$ with probability $F(x)$ and $0$ otherwise. In other words, $\mathds{1}\{X\le x\} $ is the indicator that takes value $1$ when the statement $X\le x$ is true and $0$ otherwise.  Analogously to equation~\eqref{qq-0}, the empirical quantile function $F_{n,\srs}^{-1}$ is defined  by 
\begin{align}
F_{n,\srs}^{-1}(u)
&=\inf\{ x \in \mathbb{R} ~:~ F_{n,\srs}(x)\ge u\}
\label{qq-0srs}
\\
&= X_{\lceil nu \rceil :n} 
\label{quantile-upper-2} 
\end{align}
for all $u\in (0,1]$, where $X_{1:n}\le \cdots \le X_{n:n}$ are the order statistics of the random variables $X_{1}, \dots , X_{n}$, and, for any real $x\in \mathbb{R}$, $\lceil x \rceil$ denotes the smallest integer $k$ such that $x\le k$, that is, $x \mapsto \lceil x \rceil$ is the classical ceiling function. When $u=0$, we define 
\[
F_{n,\srs}^{-1}(0)=X_{1:n}=\min_{1\le i \le n} X_i. 
\]
Condition~\eqref{cond-0upper} with $F_{n,\srs}$ in place of $F_n$ is satisfied by the Glivenko-Cantelli theorem. Hence, we are left to verify condition~\eqref{eq-1aupper-b}. By linearity of functional~\eqref{linear-upper}, the integral $\int^{\infty }_{F^{-1}(p)}  \big( 1- F_{n,\srs}(x)\big) \dd x$ is the arithmetic mean $n^{-1}\sum_{i=1}^n Y_{i,\one}(p)$ of $n$ independent copies of the random variable 
\begin{align}
Y_{\one}(p)
&=\int^{\infty }_{F^{-1}(p)}  \mathds{1}\{X>x\} \dd x  
\notag 
\\
&=\big( X-F^{-1}(p) \big)^{+} . 
\label{rv-upper}
\end{align}
The random variable $Y_{\one}(p)$ has a finite first moment because $F\in \mathcal{F}^{+}_{1}$ (see Lemma~\ref{lemma-1upper} for details). Hence, statement~\eqref{eq-1aupper-b} holds and so, by Corollary~\ref{corollary-1upper}, we have that $\int_p^1 F_{n,\srs}^{-1}(u)\dd u $ is a consistent estimator of $\int_p^1 F^{-1}(u)\dd u $. In summary, we conclude that when $F\in \mathcal{F}^{+}_{1}$, we have 
\begin{equation}\label{consistency-upper}
\int_p^1 F_{n,\srs}^{-1}(u)\dd u  \stackrel{\mathbb{P}}{\to} \int_p^1 F^{-1}(u)\dd u . 
\end{equation}
\end{example} 

\begin{note}  
In addition to equation~\eqref{quantile-upper-2}, there are also other ways to express $F_{n,\srs}^{-1}(u)$ in terms of the order statistics $X_{1:n}\le \cdots \le X_{n:n}$. For example, using the integer part $[x]$ of $x\in \mathbb{R}$, in the section on sample quantiles by  \citet{s2003}, we find the equation 
\[
F_{n,\srs}^{-1}(u)=c_{nu} X_{[nu]:n}+(1-c_{nu})X_{([nu]+1):n}, 
\]
where $c_{nu}=1$ if $nu$ is an integer and $c_{nu}=0$ otherwise. 
\end{note}

\begin{corollary}[bias]\label{corollary-2upper} 
Suppose that  $F\in \mathcal{F}^{+}_{1}$, and let $F_1,F_2,\ldots \in \mathcal{F}^{+}_{1}$ be any sequence of cdf's that are unbiased estimators of $F$, that is, 
\begin{equation}
\mathbb{E}\big( F_n(x) \big) =F(x)
\label{cond-0bupper}
\end{equation}
for all $x\in \mathbb{R}$  and $n\in \mathbb{N}$, and such that 
\begin{equation}
\mathbb{E}\big( F_n^{-1}(p) \big) \in \mathbb{R}
\label{cond-0bupper-bias}
\end{equation}
for all $n\in \mathbb{N}$. Then $\int_p^1 F_n^{-1}(u)\dd u $ is a non-positively biased estimator of $\int_p^1 F^{-1}(u)\dd u $. 
\end{corollary}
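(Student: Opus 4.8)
The plan is to apply Theorem~\ref{theorem-0upper} with $G=F_n$, take expectations of the resulting identity~\eqref{eq-1upper}, and then read off the sign of the bias from two structural features of its right-hand side: under the unbiasedness hypothesis~\eqref{cond-0bupper} the leading integral term has mean zero, while the remainder $\Rem(p;F,F_n)$ is non-negative by Theorem~\ref{theorem-0upper}. Concretely, rearranging~\eqref{eq-1upper} with $G=F_n$ gives, almost surely,
\[
\int_p^1 F_n^{-1}(u)\,\dd u=\int_p^1 F^{-1}(u)\,\dd u+\int^{\infty}_{F^{-1}(p)}\big(F(x)-F_n(x)\big)\,\dd x-\Rem(p;F,F_n),
\]
and I would take $\mathbb{E}(\cdot)$ of both sides.

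Before doing so I would check that every term is integrable. The first term is a finite constant, since $F\in\mathcal{F}^{+}_{1}$. For the remainder, bound~\eqref{r-prop-1upper} together with $\sup_{x\in\mathbb{R}}|F_n(x)-F(x)|\le 1$ gives $0\le\Rem(p;F,F_n)\le|F_n^{-1}(p)-F^{-1}(p)|$, whose expectation is finite by hypothesis~\eqref{cond-0bupper-bias} and the finiteness of $F^{-1}(p)$. For the integral term I would use the elementary pointwise bound $|F(x)-F_n(x)|\le\big(1-F(x)\big)+\big(1-F_n(x)\big)$, valid because both cdf's are $[0,1]$-valued; combined with Tonelli's theorem (the required joint measurability being automatic, as noted earlier in the paper) and the unbiasedness~\eqref{cond-0bupper}, this yields
\[
\mathbb{E}\int^{\infty}_{F^{-1}(p)}\big|F(x)-F_n(x)\big|\,\dd x\le 2\int^{\infty}_{F^{-1}(p)}\big(1-F(x)\big)\,\dd x<\infty,
\]
the finiteness being equivalent to $\mathbb{E}(X^{+})<\infty$, i.e.\ to $F\in\mathcal{F}^{+}_{1}$. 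In particular, by the displayed identity, $\int_p^1 F_n^{-1}(u)\,\dd u$ is integrable.

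With integrability secured, Fubini's theorem lets me pull the expectation inside the integral term, so its expectation equals $\int^{\infty}_{F^{-1}(p)}\mathbb{E}\big(F(x)-F_n(x)\big)\,\dd x=0$ by~\eqref{cond-0bupper}. Hence
\[
\mathbb{E}\!\left(\int_p^1 F_n^{-1}(u)\,\dd u\right)-\int_p^1 F^{-1}(u)\,\dd u=-\,\mathbb{E}\big(\Rem(p;F,F_n)\big)\le 0,
\]
which is exactly the claimed non-positive bias of $\int_p^1 F_n^{-1}(u)\,\dd u$.

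The steps involving Theorem~\ref{theorem-0upper} and the final sign are immediate; the only delicate point, and the one I expect to be the main obstacle, is the measure-theoretic justification for interchanging expectation and integration. The crux is spotting the dominating function $|F-F_n|\le(1-F)+(1-F_n)$, after which everything reduces to the standing assumptions $F\in\mathcal{F}^{+}_{1}$ and~\eqref{cond-0bupper}; one should also keep~\eqref{cond-0bupper-bias} in mind, since it is precisely what guarantees that $\Rem(p;F,F_n)$ has finite mean and hence that the bias is a well-defined finite number.
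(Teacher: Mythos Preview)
Your proposal is correct and follows essentially the same approach as the paper: apply Theorem~\ref{theorem-0upper} with $G=F_n$, take expectations, use unbiasedness~\eqref{cond-0bupper} to kill the integral term, and conclude from $\Rem(p;F,F_n)\ge 0$ together with~\eqref{r-prop-1upper} and~\eqref{cond-0bupper-bias}. The paper's proof is terser and does not spell out the Fubini/Tonelli justification for the integral term that you carefully supply via the bound $|F-F_n|\le(1-F)+(1-F_n)$, so your write-up is, if anything, more complete on the measure-theoretic side.
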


\begin{proof} 
Using Theorem~\ref{theorem-0upper}  with $F_n$ instead of $G$, we have 
\begin{align*}
\mathrm{Bias}_n^{\one}(p):= &\mathbb{E}\bigg(\int_p^1 F_n^{-1}(u)\dd u\bigg)  -\int_p^1 F^{-1}(u)\dd u 
\\
= &  -\mathbb{E}\big(\Rem(p;F,F_n)\big) 
\\
\in & (-\infty, 0]
\end{align*}
for every $n\in \mathbb{N}$.  Note that $\mathbb{E}\big(\Rem(p;F,F_n)\big) $ is finite because of 
bound~\eqref{r-prop-1upper} and condition~\eqref{cond-0bupper-bias}. This establishes Corollary~\ref{corollary-2upper}. 
\end{proof}

\begin{note}\label{note+bias}
From the proof of Corollary~\ref{corollary-2upper} we see why and how the bias 
\begin{equation}\label{note+bias-up}
\mathrm{Bias}_n^{\one}(p)=
\left\{
  \begin{array}{ll}
    -\Rem(p;F,F_n)                     & \hbox{when $F_n$ is deterministic,} \\
    -\mathbb{E}\big(\Rem(p;F,F_n)\big) &  \hbox{when $F_n$ is random,}
  \end{array}
\right.
\end{equation}
arises, where 
\[
\Rem(p;F,F_n)=\int^{F^{-1}(p)}_{F_n^{-1}(p)}  \big( F_n(x)-p\big) \dd x.  
\]
These formulas can, in turn, be used to further analyze the bias $\mathrm{Bias}_{n}^{\one}(p)$, assess its magnitude, and possibly find ways to mitigate it in real data-driven applications. For example, if the cdf $F$ has a pdf $f=F'$ that is strictly positive and continuous at the quantile $F^{-1}(p)$, then following the studies of \citet{gh2006,gh2007} on trimmed means,  we can show that in the (random) case of $F_n=F_{n,\srs}$, the SRS-based bias $\mathrm{Bias}_{n,\srs}^{\one}(p)$ attains the asymptotic expansion  
\begin{equation}\label{bias-as-upper}
\mathrm{Bias}_{n,\srs}^{\one}(p)
= -\frac{p(1-p)}{2nf(F^{-1}(p))}+ o\left(\frac 1n\right)
\end{equation}
when $n\to \infty $. We can now construct an empirical estimator for the main term in asymptotic expansion~\eqref{bias-as-upper} by using, for example,  the empirical estimator $F_{n,\srs}^{-1}(p)$ for $F^{-1}(p)$ and a kernel density estimator for $f$ \citep[e.g.,][]{r1983}. For a discussion of the role of the density-quantile function $p\mapsto f(F^{-1}(p))$ in statistics, and for a comprehensive statistical inference theory for it, including its behaviour near the end points $p=0$ and $p=1$, we refer to the pioneering studies of \citet{p1979a,p1979b}.
\end{note}

\begin{example}[SRS] \label{example-upper2}
When $F_n$ is $F_{n,\srs}$, condition~\eqref{cond-0bupper} is obviously satisfied. Condition~\eqref{cond-0bupper-bias} is satisfied when \citep[][Proposition~2, p.~679]{s1974} 
\begin{quote}
there is $\varepsilon>0$ such that $x^{\varepsilon}(1-F(x)+F(-x))\to 0$ when $x\to \infty $, which is equivalent to saying that $x^{\varepsilon}\mathbb{P}(|X|\ge x)\to 0$, where $X\sim F$.  
\end{quote} 
For a related study that connects condition~\eqref{cond-0bupper-bias} with the order of finite moments of $F$, we refer to \citet{g1995} (see also next Note~\ref{note+epsilon}). Denote the class of all cdf's satisfying the aforementioned tail-based condition of \citet{s1974} by $\mathcal{T}_{\varepsilon}$.  Hence, Corollary~\ref{corollary-2upper} tells us that when $F\in \mathcal{F}^{+}_{1} \cap \mathcal{T}_{\varepsilon}$  for some $\varepsilon>0$, the estimator 
$\int_p^1 F_{n,\srs}^{-1}(u)\dd u  $ of the upper-layer integral $ \int_p^1 F^{-1}(u)\dd u $, although being consistent by Example~\ref{example-upper1}, has non-positive bias
\begin{equation}
\mathrm{Bias}_{n,\srs}^{\one}(p):=-\mathbb{E}\big(\Rem(p;F,F_{n,\srs})\big) \in (-\infty, 0] . 
\label{bias-upper-srs}
\end{equation}
\end{example}

\begin{note}\label{note+epsilon}
The condition that $F\in \mathcal{F}^{+}_{1} \cap \mathcal{T}_{\varepsilon}$  for some $\varepsilon>0$ can equivalently be reformulated as $F\in \mathcal{F}^{+}_{1} \cap \mathcal{T}^{-}_{\varepsilon}$  for some $\varepsilon>0$, where $ \mathcal{T}^{-}_{\varepsilon}$ consists of all those cdf's that satisfy $|x|^{\varepsilon}F(x)\to 0$ when $x\to -\infty $. This reformulation is possible due to the fact that $F\in \mathcal{F}^{+}_{1} $ implies $x^{\varepsilon}(1-F(x))\to 0$ when $x\to \infty $ for every $\varepsilon \in (0,1)$, and we need this condition to hold for just one $\varepsilon>0$. For those researchers who think in terms of moments, a sufficient condition that ensures the existence of $\varepsilon>0$ such that $F\in \mathcal{F}^{+}_{1} \cap \mathcal{T}^{-}_{\varepsilon}$ would be the requirement that 
\begin{equation}
\mathbb{E}(X^{+})<\infty \quad \text{and} \quad \mathbb{E}((X^{-})^{\varepsilon})<\infty 
\label{req-upper}
\end{equation}
for some $\varepsilon >0$, where $X^{-}=(-X) \vee 0$ is the negative part of the random variable $X\sim F$.   Note that requirement~\eqref{req-upper} can equivalently be rewritten as $F\in \mathcal{F}^{+}_{1} \cap \mathcal{F}^{-}_{\varepsilon}$. 
\end{note}

The next corollary plays a pivotal role in deriving the asymptotic distribution of the empirical upper-layer integral. We note at the outset that the corollary crucially relies on the rate of convergence to $0$, denoted by $O_{\mathbb{P}}(1/A_n)$, of the supremum in condition~\eqref{cond-0upper}. For example, we would typically have $A_n=\sqrt{n}$ for iid sequences and, more broadly, for weakly-dependent stationary sequences $(X_n)$, whereas we would have $A_n=n^\theta $ with some $\theta \in (0,1/2)$ for long-memory random sequences. We shall give a more detailed note on the topic near the end of this section. At the moment we emphasize that next Corollary~\ref{corollary-3upper} does not depend on any sampling design or dependence structure. 

\begin{corollary}[asymptotic distribution]\label{corollary-3upper} 
Suppose that  $F\in \mathcal{F}^{+}_{1}$, and let $F_1,F_2,\ldots \in \mathcal{F}^{+}_{1}$ be any sequence of cdf's such that 
\begin{equation}
F_n^{-1}(p) \stackrel{\mathbb{P}}{\to} F^{-1}(p) 
\label{cond-1upper}
\end{equation}
and 
\begin{equation}
A_n\sup_{x\in \mathbb{R}} \big| F_n(x) - F(x)\big| =O_{\mathbb{P}}(1) 
\label{cond-2upper}
\end{equation}
when $n\to \infty $, where $A_n\to \infty $ are some normalizing constants. Then 
\begin{equation}\label{eq-1dupper}
A_n\left( \int_p^1 F_n^{-1}(u) \dd u -\int_p^1 F^{-1}(u) \dd u \right) 
=A_n\int^{\infty }_{F^{-1}(p)}  \big( F(x)- F_n(x)\big) \dd x +o_{\mathbb{P}}(1) 
\end{equation}
and, therefore, the convergence-in-distribution statement 
\begin{equation}\label{eq-1dupper-a}
A_n\left( \int_p^1 F_n^{-1}(u) \dd u -\int_p^1 F^{-1}(u) \dd u \right) 
\stackrel{d}{\to} \mathcal{L}_{\one}(p)  
\end{equation}
holds if and only if 
\begin{equation}\label{eq-1dupper-b}
A_n\left( \int^{\infty }_{F^{-1}(p)}  \big( 1- F_n(x)\big) \dd x 
-\int^{\infty }_{F^{-1}(p)}  \big( 1- F(x)\big) \dd x  \right)\stackrel{d}{\to} \mathcal{L}_{\one}(p) ,  
\end{equation}
where $\mathcal{L}_{\one}(p)$ is a random variable determined by statement~\eqref{eq-1dupper-b}. 
\end{corollary}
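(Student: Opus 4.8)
The plan is to mirror the proof of Corollary~\ref{corollary-1upper}, but to keep track of the normalizing rate $A_n$ throughout. First I would apply Theorem~\ref{theorem-0upper} with $F_n$ in place of $G$, which is legitimate since $F,F_n\in\mathcal{F}^{+}_{1}$, to get the exact identity
\[
\int_p^1\big(F_n^{-1}(u)-F^{-1}(u)\big)\dd u
=\int^{\infty}_{F^{-1}(p)}\big(F(x)-F_n(x)\big)\dd x-\Rem(p;F,F_n),
\]
and then multiply both sides by $A_n$. Comparing the result with the target identity~\eqref{eq-1dupper}, it remains only to show that the scaled remainder term is asymptotically negligible, that is, $A_n\,\Rem(p;F,F_n)=o_{\mathbb{P}}(1)$.

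For this step I would invoke bound~\eqref{r-prop-1upper}, which yields
\[
0\le A_n\,\Rem(p;F,F_n)\le\big|F_n^{-1}(p)-F^{-1}(p)\big|\cdot A_n\sup_{x\in\mathbb{R}}\big|F_n(x)-F(x)\big|.
\]
By hypothesis~\eqref{cond-2upper} the second factor on the right-hand side is $O_{\mathbb{P}}(1)$, while by hypothesis~\eqref{cond-1upper} the first factor is $o_{\mathbb{P}}(1)$; since the product of an $o_{\mathbb{P}}(1)$ sequence and an $O_{\mathbb{P}}(1)$ sequence is $o_{\mathbb{P}}(1)$ (a standard fact, obtained by first truncating the $O_{\mathbb{P}}(1)$ factor at a large level), this gives $A_n\,\Rem(p;F,F_n)=o_{\mathbb{P}}(1)$ and hence establishes~\eqref{eq-1dupper}.

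Finally, identity~\eqref{eq-1dupper} says that the two random sequences appearing on its left- and right-hand sides differ by an $o_{\mathbb{P}}(1)$ term, so by the converging-together (Slutsky) lemma one converges in distribution if and only if the other does, and then necessarily to the same limit. Taking $\mathcal{L}_{\one}(p)$ to be the (a priori hypothetical) limit appearing in~\eqref{eq-1dupper-b}, the equivalence of~\eqref{eq-1dupper-a} and~\eqref{eq-1dupper-b} follows at once, completing the argument.

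The proof is short, so there is no serious computational obstacle; the one point that deserves care — and the reason hypothesis~\eqref{cond-1upper} is imposed here even though it was not needed in Corollary~\ref{corollary-1upper} — is that under the faster normalization $A_n\to\infty$ it is no longer enough to know merely that $F_n^{-1}(p)$ is asymptotically bounded: one genuinely needs $F_n^{-1}(p)-F^{-1}(p)\to 0$ in probability in order to offset the Kolmogorov-type distance $A_n\sup_{x}|F_n(x)-F(x)|$, which is now only $O_{\mathbb{P}}(1)$ rather than $o_{\mathbb{P}}(1)$. A secondary, routine matter is to note that $\Rem(p;F,F_n)$ is a bona fide (jointly measurable) random variable when $F_n$ is a random cdf, so that the $o_{\mathbb{P}}$ and $O_{\mathbb{P}}$ statements are meaningful; this is automatic for the empirical cdf's used in the paper.
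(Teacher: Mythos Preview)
Your proposal is correct and follows essentially the same route as the paper's own proof: apply Theorem~\ref{theorem-0upper} with $G=F_n$, multiply through by $A_n$, and kill $A_n\,\Rem(p;F,F_n)$ via bound~\eqref{r-prop-1upper} together with conditions~\eqref{cond-1upper} and~\eqref{cond-2upper}, then conclude the equivalence by Slutsky. Your additional remarks on why~\eqref{cond-1upper} is now indispensable and on measurability of the remainder are accurate and go slightly beyond what the paper spells out, but the core argument is identical.
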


\begin{proof} 
Using  Theorem~\ref{theorem-0upper} with $F_n$ instead of $G$, we only need to show that $A_n\,\Rem(p;F,F_n)$ converges in probability to $0$. Since $\Rem(p;F,F_n)$ is non-negative and does not exceed 
\[
\big| F_n^{-1}(p)-F^{-1}(p)\big|~\sup_{x\in \mathbb{R}} \big| F_n(x) - F(x)\big|, 
\]
the statement $A_n\Rem(p;F,F_n)=o_{\mathbb{P}}(1)$ follows from conditions~\eqref{cond-1upper} and~\eqref{cond-2upper}.  This establishes equation~\eqref{eq-1dupper}, which implies the equivalence of statements~\eqref{eq-1dupper-a} and~\eqref{eq-1dupper-b}, and concludes the proof of Corollary~\ref{corollary-3upper}. 
\end{proof}

\begin{note} 
The limiting random variable $\mathcal{L}_{\one}(p)$ may or may not be normal. The nature of this random variable is determined by the sequence $(X_n)$: if it is iid or, more broadly, stationary and weakly dependent, then $\mathcal{L}_{\one}(p)$ is normal, but if the stationary sequence $(X_n)$ is long-range dependent, then the limiting random variable $\mathcal{L}_{\one}(p)$, if it exists, is typically non-normal. 
\end{note}

Concerning condition~\eqref{cond-1upper}, it is a simple probability exercise to check the validity of the following lemma by imitating related considerations in the section on SRS-based quantiles by \citet[][]{s2003}.

\begin{lemma}\label{lemma-1} 
When the quantile function $F^{-1}$ is continuous at the point $p$, condition~\eqref{cond-0upper} implies condition~\eqref{cond-1upper} and, therefore, the latter condition can be dropped from  Corollary~\ref{corollary-3upper} because condition~\eqref{cond-2upper} implies~\eqref{cond-0upper} and, therefore, it also implies condition~\eqref{cond-1upper}. 
\end{lemma}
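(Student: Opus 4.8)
The plan is to prove the chain of implications: condition~\eqref{cond-2upper} $\Rightarrow$ condition~\eqref{cond-0upper} $\Rightarrow$ condition~\eqref{cond-1upper} (under continuity of $F^{-1}$ at $p$), and then observe that the second implication, combined with the fact that \eqref{cond-2upper} already implies \eqref{cond-0upper}, shows that \eqref{cond-1upper} is redundant in Corollary~\ref{corollary-3upper}. The first implication is immediate: if $A_n\sup_x|F_n(x)-F(x)|=O_{\mathbb{P}}(1)$ with $A_n\to\infty$, then $\sup_x|F_n(x)-F(x)|=O_{\mathbb{P}}(1/A_n)=o_{\mathbb{P}}(1)$, which is exactly~\eqref{cond-0upper}. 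So the substance lies in the middle implication.

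For the middle implication, I would fix $\varepsilon>0$ and use the reformulation of continuity of $F^{-1}$ at $p$ given in~\eqref{cont-quantile}: $F(F^{-1}(p)-\varepsilon)<p<F(F^{-1}(p)+\varepsilon)$. Set $a=F^{-1}(p)-\varepsilon$ and $b=F^{-1}(p)+\varepsilon$, and let $\eta=\min\{p-F(a),\,F(b)-p\}>0$. On the event $\{\sup_x|F_n(x)-F(x)|<\eta\}$, I claim $F_n^{-1}(p)\in(a-\delta',b]\subseteq[F^{-1}(p)-\varepsilon-\delta',F^{-1}(p)+\varepsilon]$ — more precisely, one shows directly that $a<F_n^{-1}(p)\le b$ on this event. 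For the upper bound: $F_n(b)\ge F(b)-\eta\ge p$, so by the definition~\eqref{qq-0srs}-type infimum, $F_n^{-1}(p)\le b$. For the lower bound: $F_n(a)\le F(a)+\eta\le p$, and in fact $F_n(x)<p$ for all $x<a'$ for any $a'$ slightly larger than... here one must be a little careful, but since $F$ is right-continuous and $F(a)<p$, choosing the point $a$ itself gives $F_n(a)\le F(a)+\eta$; if $\eta$ is taken as a \emph{strict} fraction, say $\eta=\tfrac12\min\{p-F(a),F(b)-p\}$, then $F_n(a)<p$, hence $F_n^{-1}(p)>a$. Thus on this event $|F_n^{-1}(p)-F^{-1}(p)|\le\varepsilon$. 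Since by~\eqref{cond-0upper} the probability of $\{\sup_x|F_n(x)-F(x)|<\eta\}$ tends to $1$ for any fixed $\eta>0$, we conclude $\mathbb{P}(|F_n^{-1}(p)-F^{-1}(p)|\le\varepsilon)\to1$, and as $\varepsilon>0$ was arbitrary this is precisely~\eqref{cond-1upper}.

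The last sentence of the lemma then follows by composing: \eqref{cond-2upper} $\Rightarrow$ \eqref{cond-0upper} $\Rightarrow$ \eqref{cond-1upper}, so in the hypotheses of Corollary~\ref{corollary-3upper} one may assume only~\eqref{cond-2upper} together with continuity of $F^{-1}$ at $p$, dropping~\eqref{cond-1upper} as a separate assumption.

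The main obstacle is purely a matter of bookkeeping with the one-sided inequalities and the non-strictness inherent in the generalized inverse~\eqref{qq-0}: one must track whether each inequality ($F_n(b)\ge p$ versus $>$, $F_n(a)<p$ versus $\le$) is strict, since the infimum in the definition of $F_n^{-1}(p)$ interacts delicately with flat spots and right-continuity. The clean fix, as indicated above, is to choose the threshold $\eta$ as a strict fraction of $\min\{p-F(a),F(b)-p\}$ so that both events $\{F_n(a)<p\}$ and $\{F_n(b)\ge p\}$ are forced, which is enough to sandwich $F_n^{-1}(p)$ inside $(a,b]\subseteq(F^{-1}(p)-\varepsilon,F^{-1}(p)+\varepsilon]$. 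No other difficulty arises; the rest is an $\varepsilon$-$\eta$ routine mirroring Serfling's treatment of SRS sample quantiles.
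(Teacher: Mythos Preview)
Your proposal is correct and is exactly the standard $\varepsilon$--$\eta$ sandwich argument the paper has in mind: the paper does not spell out a proof of Lemma~\ref{lemma-1} but simply states that it ``is a simple probability exercise \ldots\ by imitating related considerations in the section on SRS-based quantiles by \citet[][]{s2003},'' and your argument is precisely that imitation carried out for a generic sequence $F_n$ satisfying~\eqref{cond-0upper}. The minor roughness in your first pass (the stray $(a-\delta',b]$ and the strict/non-strict bookkeeping) you already diagnose and repair correctly in your final paragraph by taking $\eta$ as a strict fraction of $\min\{p-F(a),F(b)-p\}$, which forces $F_n(a)<p$ and $F_n(b)>p$ and hence $a<F_n^{-1}(p)\le b$.
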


\begin{note} 
In the SRS-based context, that is, when $F_n$ is $F_{n,\srs}$, \citet[][]{s2003} derives exponential bounds for the probability $\mathbb{P}(|F_{n,\srs}^{-1}(p) - F^{-1}(p)|\ge \varepsilon )$ with the aim at establishing strong consistency. We are concerned with (weak) consistency and so do not need exponential bounds. In other words, we just need convergence of $\mathbb{P}(|F_{n}^{-1}(p) - F^{-1}(p)|\ge \varepsilon )$ to $0$ for every $\varepsilon >0$ when $n\to \infty $, and for this, condition~\eqref{cond-2upper} imposed on the generic sequence $F_n$, $n\in \mathbb{N}$, is sufficient. 
\end{note}

In Section~\ref{sect-upper-sim} we shall give an illustrative numerical study showing that statement~\eqref{cond-1upper} fails for a cdf $F$ for which the quantile function $F^{-1}$ is discontinuous at a point $p$. The same example will also show that the empirical upper-layer integral $\int_p^1 F_{n,\srs}^{-1}(u) \dd u $ fails asymptotic normality. This phenomenon is natural given the results of \citet{s1973}; see also \citet{gh2011} for further notes on the topic. 

\begin{example}[SRS] \label{example-upper3} 
When $F_n$ is $F_{n,\srs}$, condition~\eqref{cond-2upper} is a consequence of the Kolmogorov-Smirnov theorem. Since condition~\eqref{cond-2upper} is satisfied, condition~\eqref{cond-1upper} is satisfied as well, provided that, according to Lemma~\ref{lemma-1}, the quantile function $F^{-1}$ is continuous at the point $p$.  
Furthermore, since the integral $\int^{\infty }_{F^{-1}(p)}  \big( 1- F_{n,\srs}(x)\big) \dd x$ is the arithmetic mean of $n$ independent copies of random variable~\eqref{rv-upper}, 
we conclude that the integral $\int^{\infty }_{F^{-1}(p)}  \big( 1- F_{n,\srs}(x)\big) \dd x$ satisfies the central limit theorem provided that  random variable~\eqref{rv-upper} has a finite second moment. This is so provided that $F\in \mathcal{F}^{+}_{2}$ (see Lemma~\ref{lemma-1upper} for details), where $\mathcal{F}^{+}_{2}$ denotes the set of all cdf's such that the integral $\int_p^1 \big(F^{-1}(u)\big)^2\dd u $ is finite. Note that $F\in \mathcal{F}^{+}_{2}$ is equivalent to saying that the positive part $X^{+}$ of the random variable $X\sim F$ has a finite second moment $\mathbb{E}((X^{+})^2)<\infty $. 
In summary, when $F\in \mathcal{F}^{+}_{2}$ and the quantile function $F^{-1}$ is continuous at the point $p$, we have the asymptotic normality result 
\begin{equation}\label{normality-upper}
\sqrt{n}\left(\int_p^1 F_{n,\srs}^{-1}(u)\dd u  - \int_p^1 F^{-1}(u)\dd u  \right) 
\stackrel{d}{\to} \mathcal{N}(0,\sigma_{\one}^2(p)), 
\end{equation}
where $\sigma_{\one}^2(p)$ is the variance of the random variable $Y_{\one}(p)$. Note that the variance can be expressed in terms of the cdf $F$ by the formula 
\begin{equation}
\sigma_{\one}^2(p)
=\int^{\infty }_{F^{-1}(p)} \int^{\infty }_{F^{-1}(p)} \big( F(x \wedge y) -F(x)F(y) \big) \dd x  \dd y,      
\label{normality-upper-var2}
\end{equation}
although we have found that the following alternative representations (recall equation~\eqref{rv-upper})  
\begin{align}
\sigma_{\one}^2(p)
&=\Var\left( \big( X-F^{-1}(p) \big)^{+} \right)
\notag 
\\
&=\Var\left( \big( F^{-1}(U)-F^{-1}(p) \big)^{+} \right) 
\label{normality-upper-var2b}
\end{align}  
are sometimes easier to use, where $U$ denotes a uniform on $[0,1]$ random variable. 
\end{example} 

\begin{note}[existence of uniform rv's] 
We often assume for granted that a uniform on $[0,1]$ random variable exists, but whether or not this is the case depends on the underlying probability space $\{\Omega, \mathcal{A}, \mathbb{P}\}$ upon which we are building our statistical experiments and theories. As shown by, for example, \citet[][Proposition~A.31, p.~547]{FS16}, a necessary and sufficient condition for the existence of a uniform on $[0,1]$ random variable is for the underlying probability space to be atomless. Pertaining to the equality $X=F^{-1}(U)$ $\mathbb{P}$-a.s., we refer to \citet[][Lemma~A.32, p.~548]{FS16}. The write-up by \citet[][Appendix~A.1]{rw2024} on atomless probability spaces and their properties is particularly illuminating.  
\end{note}

\begin{note}[dependent data] 
Quite often in applications, data arrive in the form of time series, and thus researchers have been interested in deriving large sample properties for various tail risk measures based on such data \citep[e.g.,][and references therein]{dz2002,dz2003,hz2005,dksz2007,C08,lx2013,zkjf2020,lw2023,mb2024,G2025}.  Corollaries~\ref{corollary-1upper} and~\ref{corollary-3upper}, which are not attached to any particular sampling design or dependence structure, are applicable in such scenarios. 
Indeed, statements like those in condition~\eqref{cond-0upper} for consistency and condition~\eqref{cond-2upper} for asymptotic distribution have been verified for various classes of time series, where the empirical cdf plays the role of $F_n$. 
To give some flavour of what is known, we first note that in the case of weakly-dependent stationary time series, the normalizing constant $A_n$ is $\sqrt{n}$ and the limiting process of $A_n(F_n - F)$ is normal (centered at $0$ and with a variance reflecting the marginal variability as well as the dependence structure of the underlying time series), whereas in the case of long-memory stationary time series, the normalizing constant $A_n$ is of the order $n^{\theta}$ for various $\theta\in (0,1/2)$ and the limiting process, if exists,  is non-normal. For a glimpse of the various normalizing constants and limiting distributions, albeit in the case of the lower-layer integral, we refer to \citet{dz2004}. The monographs by \citet{DMS2002}, \citet{R2017}, and \citet{D2018} offer a wealth of information on empirical processes arising from various classes of times series. In Section~\ref{time-series-data} we shall provide more detailed user-oriented notes and references.  
\end{note}

We next present convenient computational formulas for the SRS-based empirical upper-layer integral $\int_p^1 F_{n,\srs}^{-1}(u)\dd u $.

\begin{computation}[SRS] \label{computation-upper} 
When computing, estimators in the form of integrals are not convenient to work with. Hence, using equation~\eqref{quantile-upper-2}, we shall next express $\int_p^1 F_{n,\srs}^{-1}(u)\dd u$ in terms of the order statistics $X_{1:n}\le \cdots \le X_{n:n}$, whose realized values are known in practice. Namely, we have  
\begin{align}
\int_{p}^{1} F_{n,\srs}^{-1}(u)\dd u
&= \int_{p}^{\lceil np \rceil /n} F_{n,\srs}^{-1}(u)\dd u 
+ \sum_{i=\lceil np \rceil +1}^{n} \int_{(i-1)/n}^{i/n}F_{n,\srs}^{-1}(u)\dd u 
\notag 
\\
&=\underbrace{ {1\over n}\sum_{i=\lceil np \rceil +1}^{n} X_{i :n}}_{\textrm{main term}} 
+ \underbrace{ \vphantom{\sum_{i=\lceil np \rceil +1}^{n}} \left( {\lceil np \rceil \over n}- p\right) X_{\lceil np\rceil :n}}_{\textrm{asymptotically negligible term}}    
\label{quantile-upper-3i}
\\
&=\left( 1-{\lceil np \rceil \over n}\right)
\underbrace{ {1\over n - \lceil np \rceil }\sum_{i=\lceil np \rceil +1}^{n} X_{i :n}}_{\textrm{left-trimmed mean}} 
+ \underbrace{ \vphantom{\sum_{i=\lceil np \rceil +1}^{n}} \left( {\lceil np \rceil \over n}- p\right) X_{\lceil np\rceil :n}}_{\textrm{asymptotically negligible term}} .    
\label{quantile-upper-3ii}
\end{align}
Equations~\eqref{quantile-upper-3i} and~\eqref{quantile-upper-3ii} give  computationally-friendly formulas for the empirical upper-layer integral, which is connected to  the left-trimmed mean \citep[e.g.,][]{s1973} via equation~\eqref{quantile-upper-3ii}. 
\end{computation}

\section{The upper-layer integral: simulated experiments}
\label{sect-upper-sim}

In this section we present several experiments that empirically illustrate some of the theoretical considerations in previous Section~\ref{sect-upper}.

\begin{simulation}[SRS; bias]\label{simul-b} 
We shall now empirically illustrate that $\mathrm{Bias}_{n,\srs}^{\one}(p) $ is non-positive, as stated in Example~\ref{example-upper2}. To avoid confounding negativity of bias with skewness of distributions, we work with a symmetric (around its mean) and light-tailed distribution. Namely, let $V$ be the uniform on $[0,2]$ random variable, whose quantile function is $F^{-1}(u)=2u$ for $0\le u \le 1$. 
We fix the lower-layer probability level 
\[
p={3\over 4} 
\]
and choose several values of $n$ (to be specified in a moment) to get an insight into the behaviour of the bias with respect to the sample size.  
The population upper-layer integral $\int_p^1 F^{-1}(u)\dd u $ is equal to $1-p^2=7/16$. We obtain the first SRS by observing $n$ independent copies $V_1,\dots , V_n$ of $V$, and then calculate the empirical upper-layer integral $\int_{p}^{1} F_{n,\srs}^{-1}(u)\dd u$ according to formula~\eqref{quantile-upper-3i}. This gives us the first difference  
\[
\underbrace{{1\over n}\sum_{i=\lceil 3n/4 \rceil +1}^{n} V_{i :n} + \bigg( {\lceil 3n/4 \rceil \over n} - {3 \over 4}\bigg) V_{\lceil 3n/4\rceil :n} }_{\textrm{empirical}}
- \underbrace{ \vphantom{\sum_{i=\lceil np \rceil +1}^{n}} {7\over 16}}_{\textrm{population}} 
\]
between the empirical and population upper-layer integrals. We denote the difference by $B_n(1)$ and call it the first individual bias. We repeat the procedure $m=10\,000$ times, keeping the same $p$ and $n$ values, and obtain $m$ values of the individual biases $B_n(1), \dots, B_n(m)$. We calculate their arithmetic mean $B^{\ave}_n$ and also the median $B^{\med}_n$. In Table~\ref{table-bias-1} 
\begin{table}[h!]
  \centering
\begin{tabular}{c|ccccc}
  \hline\hline 
   $n$           & 40        & 100      & 200        & 500        & 1000\\
  \hline 
   $B^{\ave}_n$  & -0.004855 & -0.002113 & -0.000944 & -0.000311 & -0.000154 \\
   $B^{\med}_n$  & -0.002482 & -0.001203 & -0.000463 & -0.000207 & -0.000099 \\ 
  \hline
\end{tabular}
  \caption{The averages and medians of $m=10\,000$ biases for various sample sizes  $n$.}
  \label{table-bias-1}
\end{table}
we report the values of $B^{\ave}_n$ and $B^{\med}_n$ for different $n$'s. Since $m$ is very large, we  expect the averages $B^{\ave}_n$ reported in Table~\ref{table-bias-1} to be close to the corresponding theoretical biases $\mathrm{Bias}_{n,\srs}^{\one}$, which are defined by equation~\eqref{bias-upper-srs}. Indeed, the table shows that all $B^{\ave}_n$'s are negative. Furthermore, their absolute values are decreasing when $n$ grows, which is expected given the proven consistency of the empirical upper-layer integral. Nevertheless, as a follow-up research topic, it is worth thinking of modifying the empirical upper-layer integral to reduce its bias. Finally, we note that the reason behind presenting the medians $B^{\med}_n$ in Table~\ref{table-bias-1} is that by comparing them with the averages $B^{\ave}_n$, we can get insights about the skewness of the distribution of individual biases. 
\end{simulation}

\begin{simulation}[SRS; asymptotic normality holds]\label{simul-c} 
Let $V$ be a uniform on $[0,2]$ random variable, and so its cdf is $F(x)=x/2$ for all $0\le x \le 2$. We set the parameter $p$ value to  
\[
p={1\over 2}  
\]
and let the sample size be $n=100\,000$, which is an even number and thus satisfies $\lceil n/2 \rceil =n/2$. We simulate $n$ independent copies  $V_1,\dots , V_n$ of $V$. Statement~\eqref{normality-upper} becomes 
\begin{equation}\label{h-true-2}
\Delta_n:={\sqrt{n}\over \sigma_{\one}(1/2)}\left(\int_{1/2}^1 F_{n,\srs}^{-1}(u)\dd u  - \int_{1/2}^1 F^{-1}(u)\dd u  \right)
\stackrel{d}{\rightarrow} \mathcal{N}(0,1) ,   
\end{equation}
where 
\begin{align}
\int_{1/2}^1 F_{n,\srs}^{-1}(u)\dd u  
& ={1\over n}\sum_{i=(n/2)+1}^{n} V_{i :n} \quad \big(\textrm{because $\lceil n/2 \rceil =n/2$}\big), 
\notag 
\\
\int_{1/2}^1 F^{-1}(u)\dd u & = {3\over 4} ,  
\notag 
\\
\sigma_{\one}^2(1/2) & = {5\over 48}. 
\label{eq-variance-0}
\end{align}
To arrive at the value of $\sigma_{\one}^2(p) $ noted in equation~\eqref{eq-variance-0}, we have used representation~\eqref{normality-upper-var2b} with $F^{-1}(u)=2u$ for all $0\le u \le 1$. Simulation results based on $m=10\,000$ replications of  $\Delta_n$ are depicted in the form of a relative histogram in Figure~\ref{figure-normality-2c},    
\begin{figure}[h!]
  \centering
  \includegraphics[width=0.7\textwidth]{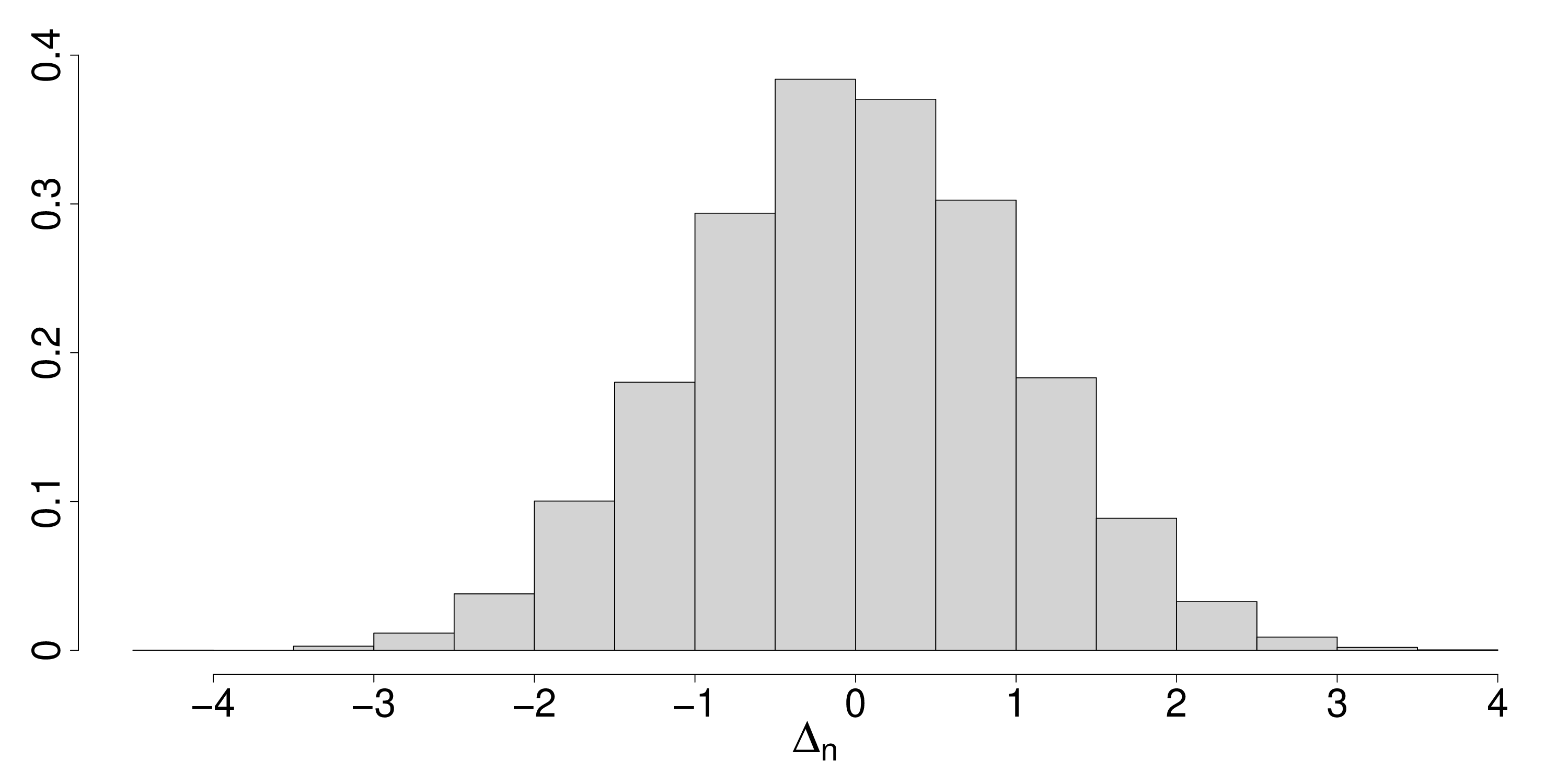}
  \caption{The relative histogram of $m=10\,000$ values of $\Delta_n$ in~\eqref{h-true-2} when $n=100\,000$.}
  \label{figure-normality-2c}
\end{figure}
which supports the limiting standard normal distribution claimed in statement~\eqref{h-true-2}.
\end{simulation}

\begin{simulation}[SRS; asymptotic normality fails]\label{simul-a} 
To illustrate the pivotal role of continuity of the quantile function $F^{-1}$ at the point $p$ and thus of condition~\eqref{cond-1upper} in the statement of asymptotic normality of the empirical upper-layer integral, we next introduce a new random variable, denoted by $Z$. Its definition is based on a uniform on $[0,2]$ random variable $V$. Namely, we fix any constant $a\in [0,1]$ and define a ``gapped'' random variable $Z$ by the formula 
\[
Z= (1-a)V \mathds{1}\{V \le 1\}+\big( 2a+(1-a)V\big) \mathds{1}\{V > 1\}. 
\] 
Note that when $a=0$, the gap vanishes and $Z$ turns into the original uniform random variable $V$, the case that we have discussed in previous Simulation~\ref{simul-c}. When $a=1$, the random variable $Z$ turns into a discrete random variable taking values $0$ and $2$ with probabilities $1/2$. To better understand and appreciate the random variable $Z$ with respect to the parameter $a\in (0,1)$, we note the following properties of $Z$ when it is viewed as a function of the argument $V$: 
\begin{itemize}
\item 
if $V=0$, then $Z=0$; 
\item 
if $V=1$, then $Z=1-a$; 
\item 
if $V \downarrow 1$, then $Z \downarrow 1+a$; 
\item
if $V=2$, then $Z=2$; 
\end{itemize}
Hence, all the values of $Z $ are in the interval $ [0,2]$, although there are no values in $(1-a,1+a]$, which represents the gap in the distribution of $Z$. The cdf $z\mapsto H(z)$ of $Z$, which is visualized in Figure~\ref{figure-gap-1},  
\begin{figure}[h!]
    \centering
    \begin{subfigure}[b]{0.41\textwidth}
        \includegraphics[width=\textwidth]{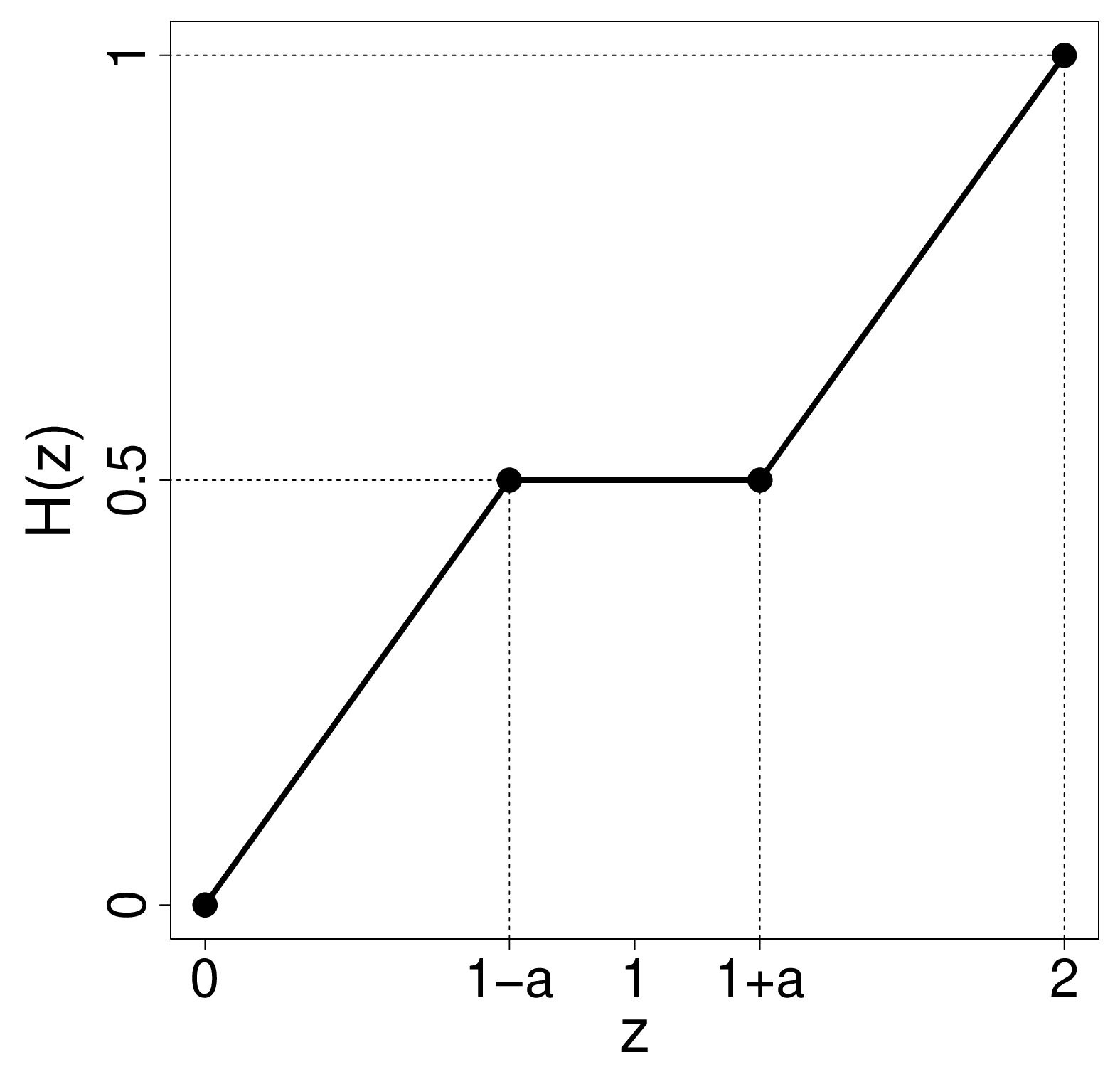}
        \caption{The cdf $H$.}
        \label{fig:ATVaR-old}
    \end{subfigure}
\qquad
    \begin{subfigure}[b]{0.41\textwidth}
        \includegraphics[width=\textwidth]{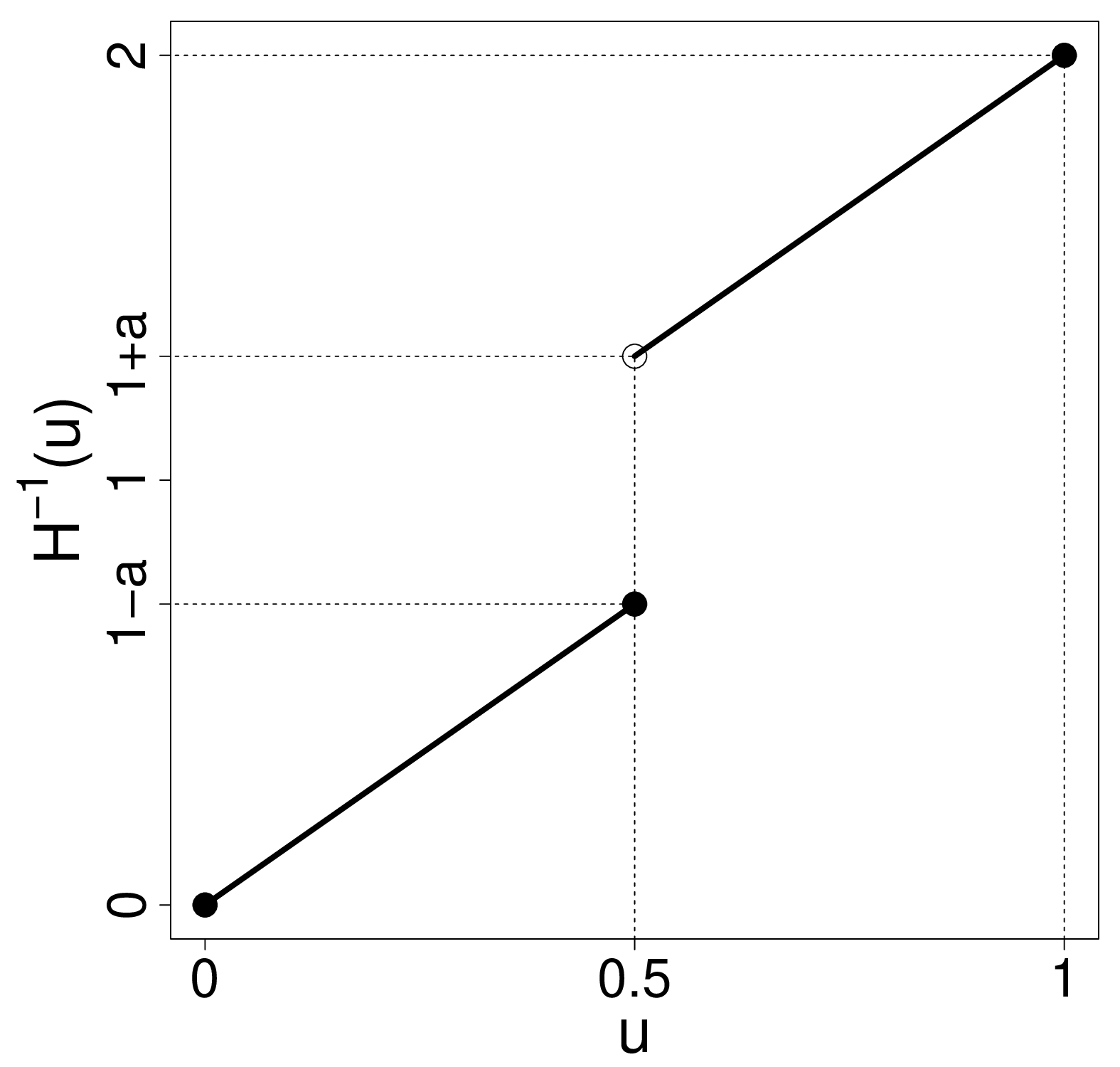}
        \caption{The quantile function $H^{-1}$.}
        \label{fig:FTVaR-old}
    \end{subfigure}
    \caption{The distribution of the gapped random variable $Z$.}
    \label{figure-gap-1}
\end{figure}
is strictly increasing at every point $z\in (0,2)\setminus [1-a,1+a]$ and is constant on the interval $ [1-a,1+a] $. The quantile $H^{-1}(1/2)$, which is the median of $Z$, is the left-hand  endpoint $1-a$ of the gap of $Z$. Consequently, the quantile function $H^{-1}$ is not continuous at the point $p=1/2$. This helps us to understand the reason why in the case 
\begin{equation}\label{p05}
p={1\over 2}, 
\end{equation}
we have the following two non-convergence statements: first, non-consistency 
\begin{equation}
H_{n,\srs}^{-1}(1/2) \stackrel{\mathbb{P}}{\nrightarrow} H^{-1}(1/2) ,  
\label{h-fail-1}
\end{equation} 
and, second, asymptotic non-normality 
\begin{equation}\label{h-fail-2}
\Delta_n:={\sqrt{n}\over \sigma_{\one}(1/2)}\left(\int_{1/2}^1 H_{n,\srs}^{-1}(u)\dd u  - \int_{1/2}^1 H^{-1}(u)\dd u  \right)
\stackrel{d}{\nrightarrow} \mathcal{N}(0,1), 
\end{equation}
where (recall equation~\eqref{normality-upper-var2})
\begin{equation}
\sigma_{\one}^2(1/2) 
=\int^{2}_{1-a} \int^{2}_{1-a} \big( H(x \wedge y) -H(x)H(y) \big) \dd x  \dd y  .  
\label{h-fail-2var}
\end{equation}
Although statements~\eqref{h-fail-1} and~\eqref{h-fail-2} can be verified theoretically, which we shall discuss in a moment, we find it illuminating to demonstrate the two statements empirically. For this, we set the gap radius to 
\[
a={1\over 2} .   
\]
Hence, the distribution of $Z$ has the gap $[1/2,3/2] $ in its support $ [0,2]$. When simulating we conveniently work with \textit{even} sample sizes $n$  and thus have the equation $\lceil n/2 \rceil =n/2$, which simplifies formulas of the empirical estimators, especially of the upper-layer integral. We proceed by noting that given the nature of the random variable $V$, which is symmetric around its mean $1$, and given the definition of the gapped $Z$, we expect to see approximately a half of the simulated $Z$ values below $1/2$ and the remaining ones above $3/2$. Indeed, when $n\to \infty$, we have the following two statements (see Appendix~\ref{app-z-conv}) 
\begin{gather} 
\mathbb{P}\left( Z_{\lceil n/2 \rceil :n} > 3/2\right) \to 1/2,  
\label{z-conv-32}
\\
\mathbb{P}\left( Z_{\lceil n/2 \rceil :n} < 1/2\right) \to 1/2.   
\label{z-conv-12}
\end{gather}
This explains non-consistency statement~\eqref{h-fail-1}, which we have visualized in Figure~\ref{figure-normality-1}  
\begin{figure}[h!]
  \centering
  \includegraphics[width=0.7\textwidth]{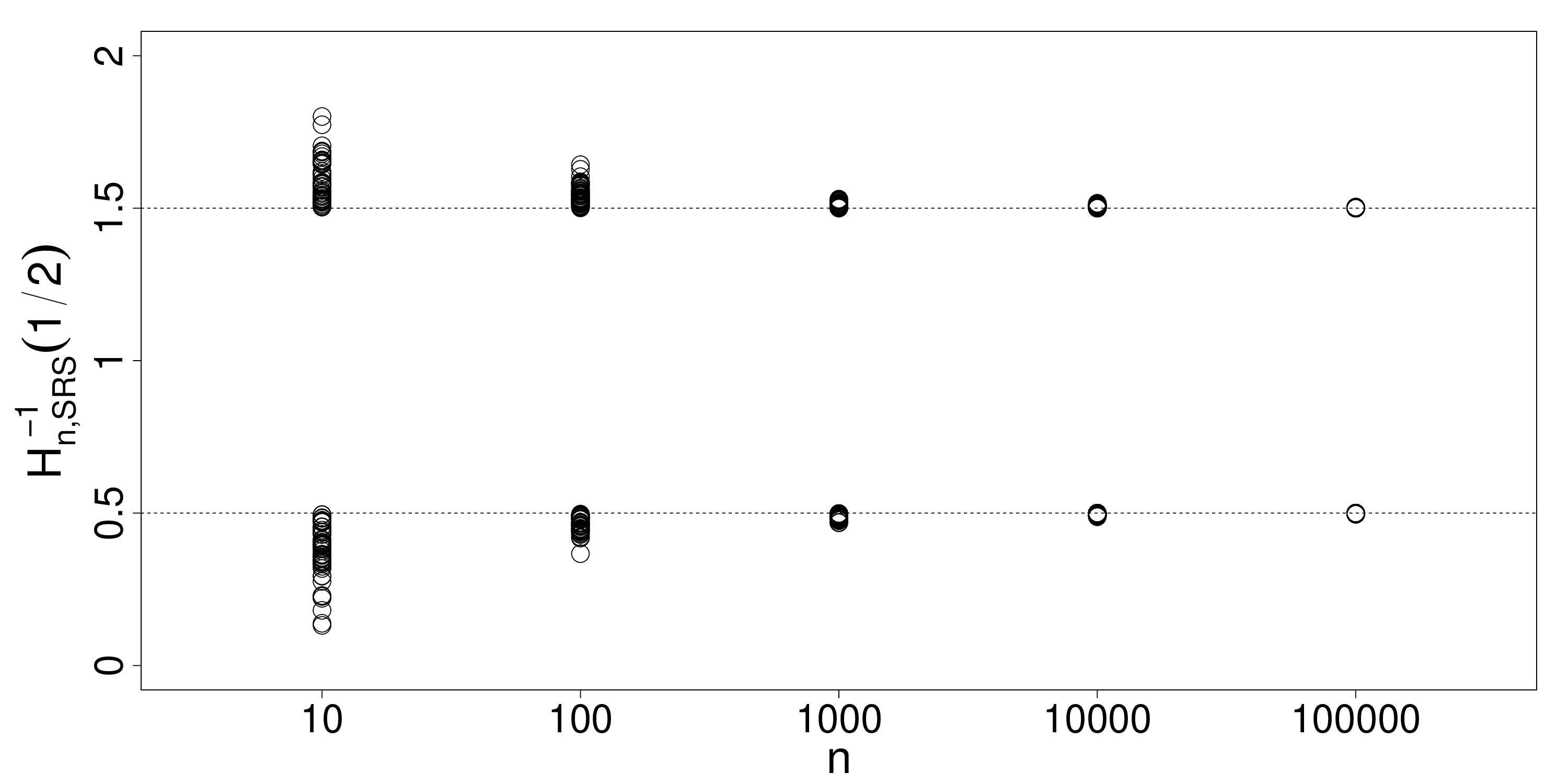}
  \caption{Simulated $m=100$ values of $H_{n,\srs}^{-1}(1/2)$ for various sample sizes  $n$.}
  \label{figure-normality-1}
\end{figure}
using the formulas 
\begin{align*}
H_{n,\srs}^{-1}(1/2) & =  Z_{\lceil n/2 \rceil :n} ~~ (=Z_{(n/2) :n}), 
\\
H^{-1}(1/2) & = 1/ 2   
\end{align*}
for various (even) sample sizes $n$. 
As to asymptotic non-normality statement~\eqref{h-fail-2}, we have used the formulas 
\begin{align}
\int_{1/2}^1 H_{n,\srs}^{-1}(u)\dd u  
&= {1\over n}\sum_{i=(n/2) +1}^{n} Z_{i :n} \quad \big(\textrm{because $\lceil n/2 \rceil =n/2$}\big), 
\notag 
\\
\int_{1/2}^1 H^{-1}(u)\dd u & = {7\over 8} ,  
\notag
\\
\sigma_{\one}^2(1/2) & = {77 \over 192}.   
\label{eq-variance}
\end{align}
Figure~\ref{figure-normality-2} 
\begin{figure}[h!]
  \centering
  \includegraphics[width=0.7\textwidth]{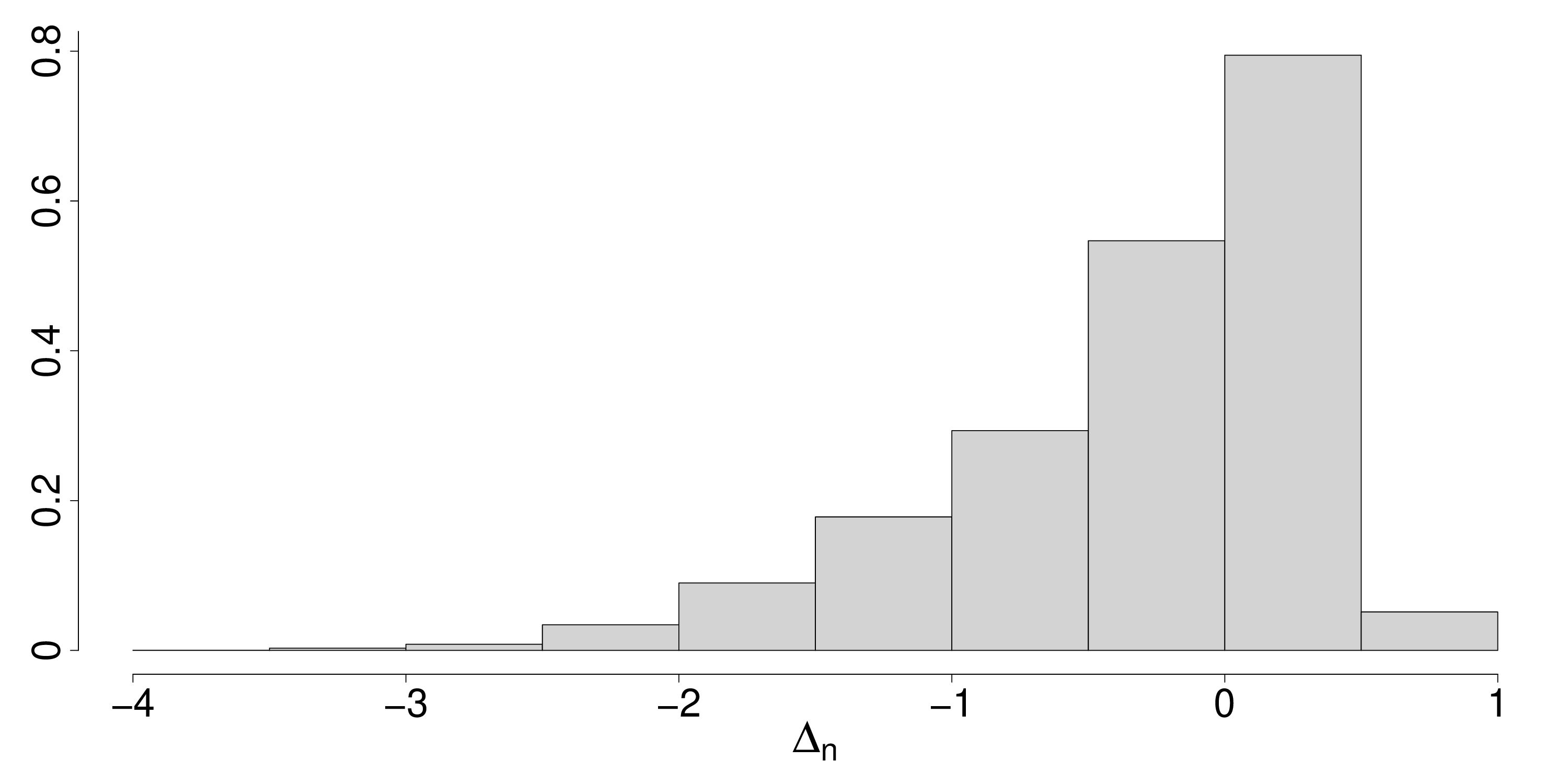}
  \caption{The relative histogram of $m=10\,000$ values of $\Delta_n$ when $n=100\,000$.}
  \label{figure-normality-2}
\end{figure}
depicts the simulation results based on the sample size $n=100\,000$ and the number of replications $m=10\,000$. Both $n$ and $m$ are very large, thus depriving us of any hope that a standard normal density could somehow emerge if even larger values were used. In fact, we know from theoretical studies on  trimmed means \citep{s1973} that  asymptotic normality does fail when (recall Note~\ref{note-quant}) continuous distributions have gaps (i.e., their cdf's have flat regions) adjacent to the quantiles where trimming occurs, as is the case in the present example (see Figure~\ref{figure-gap-1}).
\end{simulation}

\begin{note}
When calculating the variance $\sigma_{\one}^2(1/2)$ in equation~\eqref{eq-variance}, we found that the quantile-based expression (recall equation~\eqref{normality-upper-var2b}) 
\begin{align}
\sigma_{\one}^2(1/2)
&=\Var\left( \big( H^{-1}(U)-H^{-1}(1/2) \big)^{+} \right) 
\notag 
\\ 
&=\Var\left( \big( H^{-1}(U)-(1-a) \big)^{+} \right) 
\label{eq-variance-spec0}
\end{align} 
is a particularly convenient starting point, where $U$ denotes a uniform on $[0,1]$ random variable, with $1/2$ referring to our choice of $p$ made in~\eqref{p05}. In this way, we also easily arrive at the expression 
\begin{equation}
{29 a^2+14 a+5 \over 48} 
\label{eq-variance-spec}
\end{equation} 
for $\sigma_{\one}^2(1/2)$ in the case of arbitrary $a\in [0,1]$ (see Appendix~\ref{proof-eq-variance-spec} for details). Hence, $\sigma_{\one}^2(1/2)=5/48$ when $a=0$ (equation~\eqref{eq-variance-0}),  $\sigma_{\one}^2(1/2)=77/192$ when $a=1/2$ (equation~\eqref{eq-variance}), and $\sigma_{\one}^2(1/2)=1$ when $a=1$ (in which case $Z$ turns into a discrete random variable taking values $0$ and $2$ with probabilities $1/2$).     
Of course, we could have also calculated $\sigma_{\one}^2(1/2)$ using equation~\eqref{h-fail-2var} but this cdf-based route is more involved.   
\end{note}

\section{The lower-layer integral}
\label{sect-lower}

In this section we develop statistical inference for the lower-layer integral $\int_0^p F^{-1}(u)\dd u  $ for any fixed upper probability level 
\[
p\in (0,1). 
\]
Naturally, we assume that the cdf $F$ is from the class  $\mathcal{F}^{-}_{1}$ of those cdf's for which the integral $\int_0^p F^{-1}(u)\dd u  $ is finite, which is the same as to require that the negative part $X^{-}=(-X) \vee 0$ of the random variable $X\sim F$ has a finite first moment $\mathbb{E}(X^{-})<\infty $. 

\begin{note}  
When progressing through this section, we shall see that the narrative closely follows that of Section~\ref{sect-upper}. This may prompt us to think that the results could be derived from those of Section~\ref{sect-upper} by simply replacing $X$ by $-X$. This train of thought, though useful, is fraught with danger because $F^{-1}$ is not the ordinary inverse of $F$ and so subtleties do arise. It is our desire, therefore, to present the results in such a way that the user would be able to use them directly, without needing to work out the underlying theory.  
\end{note}

We continue our analysis of the lower-layer integral by noting that the functional 
\[
\mathcal{F}^{-}_{1} \ni F \mapsto \int_0^p F^{-1}(u)\dd u\in \mathbb{R}
\]
is not linear. It appears that when developing statistical inference it can be approximated by the linear one 
\begin{equation}\label{linear-lower}
\mathcal{F}^{-}_{1} \ni G \mapsto \int_{-\infty }^{F^{-1}(p)}  G(x) \dd x \in \mathbb{R} . 
\end{equation}
Indeed, we shall see from the following corollaries that the difference between the empirical lower-layer integral  $\int_0^p F_n^{-1}(u)\dd u$ and its population counterpart $\int_0^p F^{-1}(u)\dd u$ gets asymptotically close to the difference between the integrals $\int_{-\infty }^{F^{-1}(p)}  F(x) \dd x $ and  $\int_{-\infty }^{F^{-1}(p)}  F_n(x) \dd x$, where $F_n \in \mathcal{F}^{-}_{1}$, $n \in \mathbb{N}$, are cdf's approaching $F$ when the parameter $n$ grows indefinitely. The corollaries rely on the following theorem, whose proof is given in Appendix~\ref{appendix-2}.

\begin{theorem}\label{theorem-0lower}
Let $F$ and $G$ be any two cdf's from $\mathcal{F}^{-}_{1}$. Then 
\begin{equation}\label{eq-1lower}
\int_0^p \big( G^{-1}(u)-F^{-1}(u) \big) \dd u 
=\int_{-\infty }^{F^{-1}(p)}  \big( F(x)- G(x)\big) \dd x + \Rem(p;F,G), 
\end{equation}
where the non-negative remainder term $\Rem(p;F,G)$ is defined by equation~\eqref{qq-3upper} and satisfies bounds~\eqref{r-prop-0upper} and~\eqref{r-prop-1upper}. 
\end{theorem}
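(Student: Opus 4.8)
The plan is to mirror the proof of Theorem~\ref{theorem-0upper} (given in Appendix~\ref{appendix-2}), adapting the integration-by-parts / change-of-variables computation to the lower layer $(0,p)$ instead of the upper layer $(p,1)$. The core identity to establish is
\[
\int_0^p \big( G^{-1}(u)-F^{-1}(u) \big) \dd u
=\int_{-\infty }^{F^{-1}(p)}  \big( F(x)- G(x)\big) \dd x + \Rem(p;F,G).
\]
First I would recall the standard fact (a consequence of Fubini, valid whenever the relevant quantiles are integrable, i.e. $F,G\in\mathcal{F}^{-}_{1}$) that for a single cdf $H\in\mathcal{F}^{-}_{1}$ and any real number $t$,
\[
\int_0^p H^{-1}(u)\dd u
= p\,t - \int_{-\infty}^{t} H(x)\dd x + \int_{H^{-1}(p)}^{t}\big(H(x)-p\big)\dd x,
\]
or, more cleanly, the representation $\int_0^p H^{-1}(u)\dd u = p\,H^{-1}(p) - \int_{-\infty}^{H^{-1}(p)} H(x)\dd x$, obtained by interchanging the order of integration in $\int_0^p\int_{-\infty}^{H^{-1}(u)}\dd x\,\dd u$ after noting $H^{-1}(u)\le x \iff u\le H(x)$ for $u\in(0,1)$. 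Applying this with $H=G$ and $H=F$ and subtracting gives an expression with quantile-evaluated endpoints; the remaining work is to rewrite the resulting endpoint discrepancy, which lives on the interval between $F^{-1}(p)$ and $G^{-1}(p)$, in the stated form involving $\int_{-\infty}^{F^{-1}(p)}(F-G)$ plus $\Rem(p;F,G)=\int^{F^{-1}(p)}_{G^{-1}(p)}(G(x)-p)\dd x$.

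The cleanest bookkeeping, I expect, is to use the $t$-free version for $G$ but anchored at $F^{-1}(p)$ rather than at $G^{-1}(p)$: writing $\int_0^p G^{-1}(u)\dd u = p\,F^{-1}(p) - \int_{-\infty}^{F^{-1}(p)} G(x)\dd x + \int_{G^{-1}(p)}^{F^{-1}(p)}\big(G(x)-p\big)\dd x$, and the anchored-at-$F^{-1}(p)$ version for $F$, which (since $F^{-1}(p)$ is exactly the natural anchor for $F$) reads $\int_0^p F^{-1}(u)\dd u = p\,F^{-1}(p) - \int_{-\infty}^{F^{-1}(p)} F(x)\dd x$. Subtracting, the $p\,F^{-1}(p)$ terms cancel and we land precisely on equation~\eqref{eq-1lower} with $\Rem(p;F,G)$ as claimed. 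This is essentially the Theorem~\ref{theorem-0upper} argument reflected across the layer, and the sign flip in front of the remainder (now $+\Rem$ rather than $-\Rem$) is exactly the bookkeeping artifact of integrating over $(0,p)$ versus $(p,1)$ — worth double-checking by a sanity case (e.g.\ $G^{-1}(p)<F^{-1}(p)$ versus the reverse, and degenerate two-point cdf's) but not genuinely delicate.

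For the properties of the remainder, there is nothing new to prove: Theorem~\ref{theorem-0upper} already asserts that $\Rem(p;F,G)$, defined by the same formula~\eqref{qq-3upper}, is non-negative and satisfies bounds~\eqref{r-prop-0upper} and~\eqref{r-prop-1upper}, and the claim here merely invokes that same object. (Non-negativity is the observation that on the interval from $G^{-1}(p)$ to $F^{-1}(p)$ the integrand $G(x)-p$ has the sign of $x-G^{-1}(p)$, so the signed integral is always $\ge 0$; bound~\eqref{r-prop-0upper} follows since on that same interval $G(x)-p$ and $G(x)-F(x)$ compare favorably because $F^{-1}(p)$ sits on the "$F(x)\ge p$" side; bound~\eqref{r-prop-1upper} is then crude estimation of integrand times interval length.) So in the write-up I would prove only identity~\eqref{eq-1lower} and then cite Theorem~\ref{theorem-0upper} for the remainder's properties.

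The main obstacle — really the only place care is needed — is the careful handling of the endpoint terms when $F^{-1}$ is not a genuine inverse: one must be sure that the Fubini interchange and the integration-by-parts relation $\int_0^p H^{-1}(u)\dd u + \int_{-\infty}^{H^{-1}(p)} H(x)\dd x = p\,H^{-1}(p)$ hold with the correct treatment of atoms of $H$ at the point $x=H^{-1}(p)$ and of flat stretches of $H$ just below level $p$. Since $H^{-1}$ is left-continuous and $\{u: H^{-1}(u)\le x\}$ differs from $\{u\le H(x)\}$ only at a single point for each $x$ (measure zero in $u$), the interchange is legitimate; and the integrability hypothesis $F,G\in\mathcal{F}^{-}_{1}$ guarantees every integral in sight is finite, so no boundary blow-up can occur. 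Once that is pinned down, the identity drops out by the subtraction described above, and the rest is a citation.
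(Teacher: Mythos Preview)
Your proposal is correct and follows essentially the same route as the paper: both start from the integration-by-parts identity $\int_0^p H^{-1}(u)\dd u = p\,H^{-1}(p) - \int_{-\infty}^{H^{-1}(p)} H(x)\dd x$ (the paper's Lemma~\ref{lemma-1lower}/equation~\eqref{equation-0lower}), apply it to $F$ and $G$, and subtract, with the endpoint discrepancy over $[G^{-1}(p),F^{-1}(p)]$ producing exactly $\Rem(p;F,G)$. Your anchoring-at-$F^{-1}(p)$ shortcut for the $G$ expansion is a minor streamlining of the paper's two-step version (expand at $G^{-1}(p)$, then split the integral as in~\eqref{qq-2lower}), and your plan to cite Theorem~\ref{theorem-0upper} for the remainder's non-negativity and bounds is precisely what the paper does.
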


Theorem~\ref{theorem-0lower} is fundamental for the current section, and it also serves a direct link between the topic of the current paper and the Vervaat process $V_{n}$ mentioned in Section~\ref{intro}. The following note elucidates the connection in detail.

\begin{note} \label{note-vervaat} 
The (general) Vervaat process $V_{n}$ is defined on the unit interval $(0,1)$ by the equation 
\begin{align*}
V_{n}(p) 
:=& \int_0^p \big( F_{n}^{-1}(u)-F^{-1}(u) \big) \dd u 
+\int_{-\infty }^{F^{-1}(p)}  \big( F_{n}(x)- F(x)\big) \dd x 
\\
=& \Rem(p;F,F_{n}) , 
\end{align*}
where $F_{n}$, which replaces $G$ in equation~\eqref{eq-1lower}, is the empirical cdf based on $n$ random variables $X_1,\dots , X_n $, each following the cdf $F$. Originally, the process $V_n$ was introduced and considered by \citet{cs1996} in the case of iid random variables, with a number of subsequent studies by various authors devoted to asymptotic properties of $V_n$ in the case of independent as well as dependent random variables, such as those generated by time series models. As to the pioneering studies of \citet{v1972a,v1972b}, they concentrated on the process $V_n$ in the case of $n$ independent and uniformly on $[0,1]$ distributed random variables $U_1,\dots , U_n$, in which case $V_n$ reduces to  
\begin{align*}
V_{n,\srs}^U(p)
:=&\int_0^p \underbrace{\big( E_{n,\srs}^{-1}(u)- u \big)}_{\textrm{uniform quantile process}} \dd u 
~ + ~ \int_{0}^{p}  \underbrace{\big( E_{n,\srs}(u)- u\big)}_{\textrm{uniform empirical process}} \dd u   
\\
=&  \int_0^p \Big( \underbrace{\big( E_{n,\srs}^{-1}(u)- u \big) 
+ \big( E_{n,\srs}(u)- u\big)}_{\textrm{uniform Bahadur-Kiefer process}}\Big) \dd u  , 
\end{align*}
where $E_{n,\srs}$ is the empirical cdf based on $U_1,\dots , U_n$, and $E_{n,\srs}^{-1}$ is the corresponding empirical quantile function. We infer from Theorem~\ref{theorem-0lower} that $V_{n,\srs}^U(p)\ge 0$ for all $p\in [0,1]$, with the boundary values $V_{n,\srs}^U(0)=0$ and $V_{n,\srs}^U(1)=0$. Sample paths of the uniform Vervaat process $V_{n,\srs}^U(p)$, $0\le p \le 1$, and those of the uniform empirical process $E_{n,\srs}(p)- p$, $0\le p \le 1$, are illustrated in  Figure~\ref{figure-vervaat-empirical}. 
\begin{figure}[h!]
    \centering
    \begin{subfigure}[b]{0.41\textwidth}
        \includegraphics[width=\textwidth]{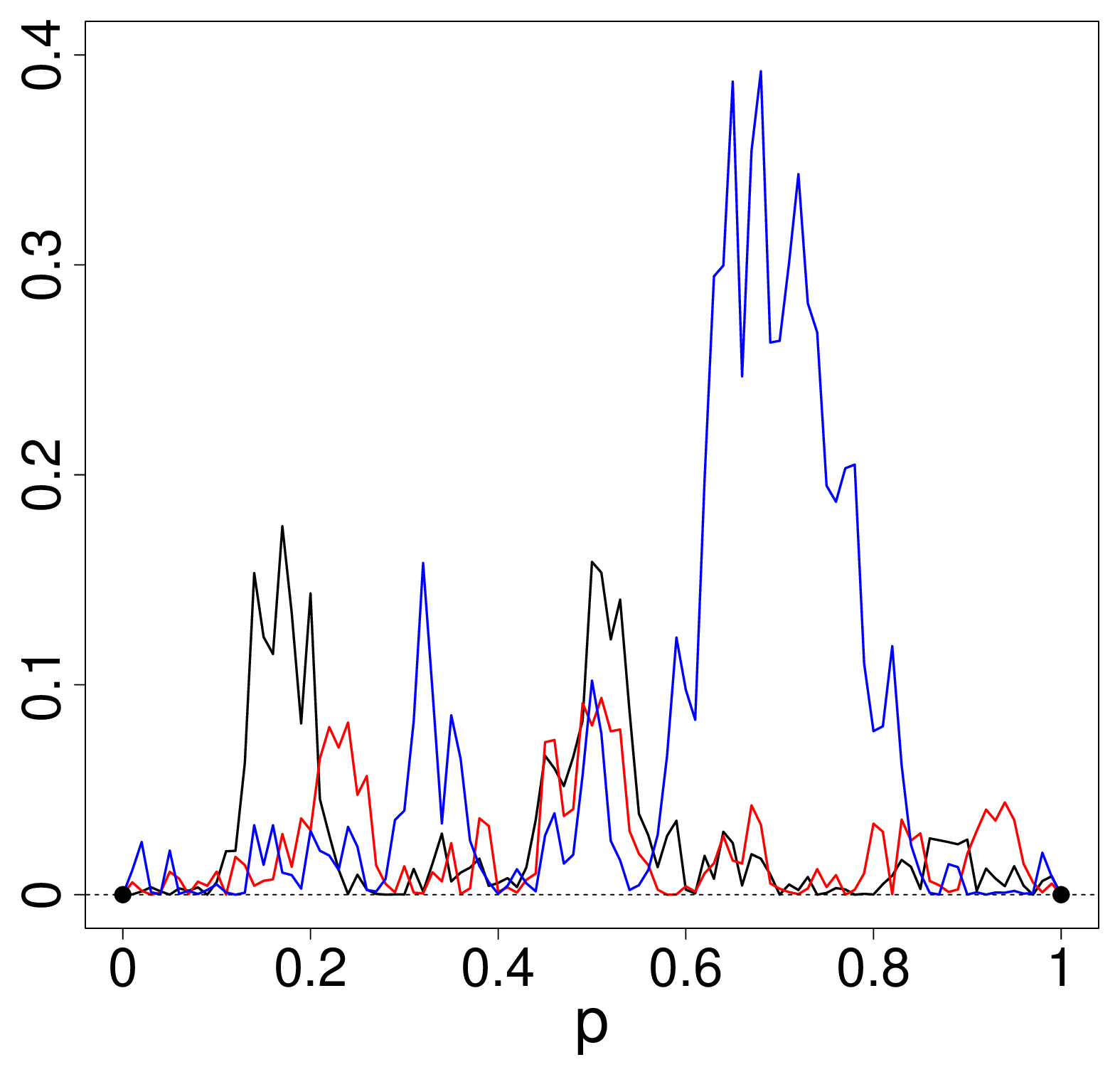}
        \caption{Vervaat process $n\,V_{n,\srs}^U(p)$.}
        \label{fig:vervaat}
    \end{subfigure}
\qquad
    \begin{subfigure}[b]{0.41\textwidth}
        \includegraphics[width=\textwidth]{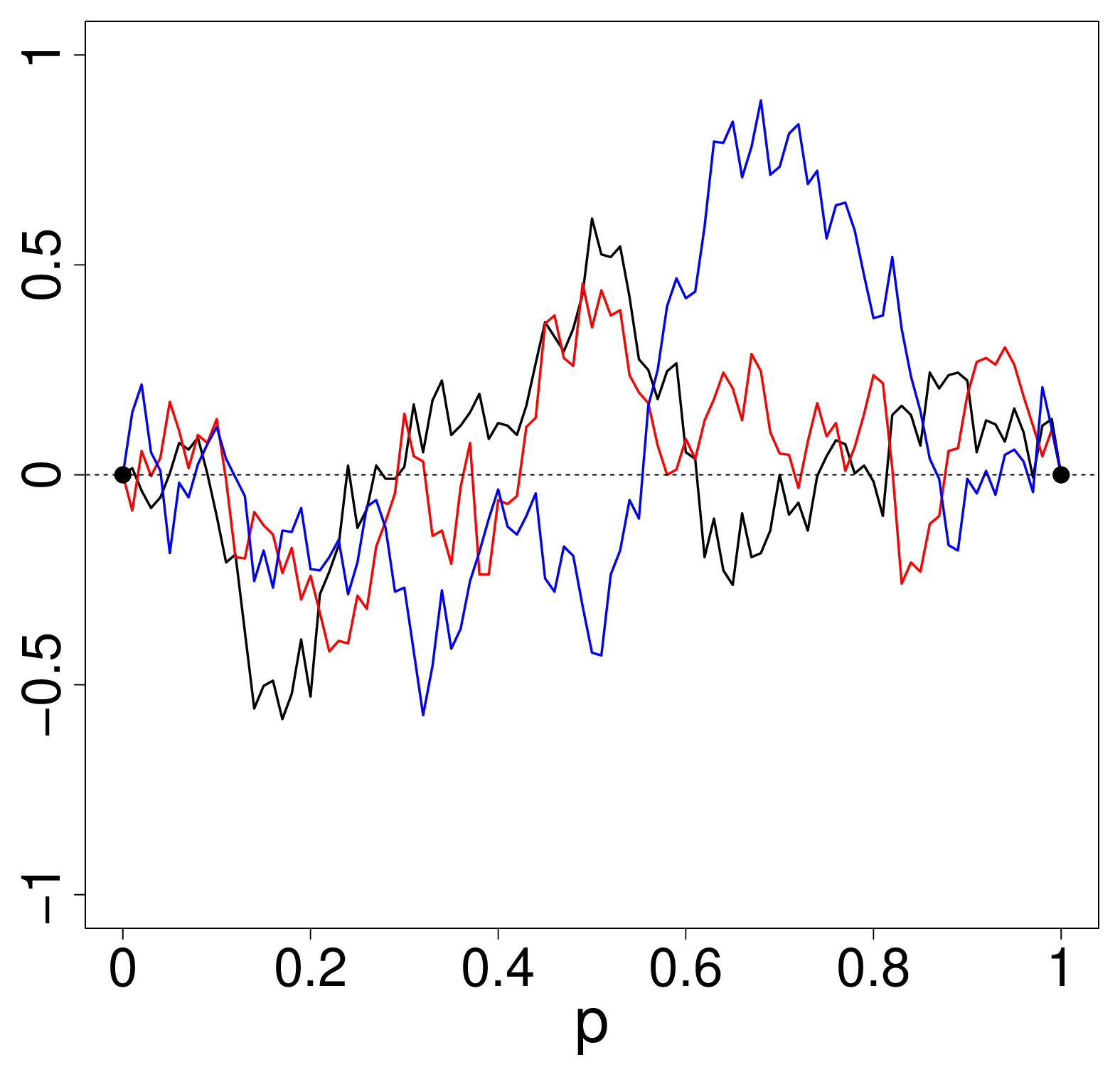}
        \caption{Empirical process $n^{1/2}(E_{n,\srs}(p)- p)$.}
        \label{fig:empirical}
    \end{subfigure}
    \caption{Sample paths of normalized uniform Vervaat and empirical processes based on three uniform-on-$[0,1]$ SRS's of sizes $n=100\,000$.}
    \label{figure-vervaat-empirical}
\end{figure}
(We shall discuss computational formulas and normalizing constants of these two stochastic processes at the end of this section; see Computation~\ref{computation-vervaat}.)  
Mathematically, $V_{n,\srs}^U(p)$ is the lower-layer integral of the uniform version of the Bahadur-Kiefer process \citep{b1966,k1967}. In the case of general sequences $X_1,\dots , X_n \sim F$ of iid random variables, $V_{n,\srs}(p)$  is the lower-layer integral of the (general) Bahadur-Kiefer process, as seen from the equations 
\begin{align*}
V_{n,\srs}(p) 
&= \int_0^p \big( F_{n,\srs}^{-1}(u)-F^{-1}(u) \big) \dd u  
+\int_{-\infty }^{F^{-1}(p)}  \big( F_{n,\srs}(x)- F(x)\big) \dd x
\\
&= \int_0^p \bigg ( \underbrace{\big( F_{n,\srs}^{-1}(u)-F^{-1}(u) \big) 
+ {1\over f(F^{-1}(u)) }  \big( F_{n,\srs}(F^{-1}(u))- u\big)}_{\textrm{Bahadur-Kiefer process}} \bigg) \dd u  , 
\end{align*}
but this interpretation necessarily requires the pdf $f=F'$ to exist and be positive at all quantiles $F^{-1}(u)$, $0<u<p$, as only in this case the (general) Bahadur-Kiefer process is meaningfully defined.  
\end{note}

We are now ready to formulate and discuss several corollaries to Theorem~\ref{theorem-0lower}. 

\begin{corollary}[consistency]\label{corollary-1lower} 
Suppose that $F\in \mathcal{F}^{-}_{1}$, and let $F_1,F_2,\ldots \in \mathcal{F}^{-}_{1}$ be any sequence of cdf's satisfying condition~\eqref{cond-0upper}. Then 
\begin{equation}\label{eq-1a}
\int_0^p F_n^{-1}(u) \dd u - \int_0^p F^{-1}(u) \dd u 
=\int_{-\infty }^{F^{-1}(p)}  \big( F(x)- F_n(x)\big) \dd x +o_{\mathbb{P}}(1) 
\end{equation}
and, therefore, the consistency statement 
\begin{equation}\label{eq-1a-a}
\int_0^p F_n^{-1}(u) \dd u \stackrel{\mathbb{P}}{\to} \int_0^p F^{-1}(u) \dd u 
\end{equation}
holds if and only if 
\begin{equation}\label{eq-1a-b}
\int_{-\infty }^{F^{-1}(p)}  F_n(x) \dd x \stackrel{\mathbb{P}}{\to} \int_{-\infty }^{F^{-1}(p)}F(x) \dd x .  
\end{equation}
\end{corollary}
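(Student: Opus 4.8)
The plan is to mirror the proof of Corollary~\ref{corollary-1upper}, but now invoking Theorem~\ref{theorem-0lower} in place of Theorem~\ref{theorem-0upper}. First I would apply Theorem~\ref{theorem-0lower} with $F_n$ substituted for $G$, which gives the exact identity
\[
\int_0^p F_n^{-1}(u)\dd u - \int_0^p F^{-1}(u)\dd u
= \int_{-\infty}^{F^{-1}(p)} \big( F(x)-F_n(x)\big)\dd x + \Rem(p;F,F_n),
\]
valid almost surely since $F,F_n\in\mathcal{F}^{-}_{1}$. The key observation is that the sign of the remainder term is opposite to the one in the upper-layer case, but this is irrelevant for consistency: what matters is only that $\Rem(p;F,F_n)\to 0$ in probability.

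Next I would show $\Rem(p;F,F_n)=o_{\mathbb{P}}(1)$. By the bound~\eqref{r-prop-1upper} (which, by Theorem~\ref{theorem-0lower}, applies verbatim here),
\[
0 \le \Rem(p;F,F_n) \le \big| F_n^{-1}(p)-F^{-1}(p)\big|\,\sup_{x\in\mathbb{R}}\big|F_n(x)-F(x)\big|.
\]
Condition~\eqref{cond-0upper} kills the supremum factor, so it remains to observe that $\big|F_n^{-1}(p)-F^{-1}(p)\big|$ is asymptotically bounded in probability. Since $F^{-1}(p)$ is a fixed finite number (finiteness being guaranteed because $F\in\mathcal{F}^{-}_{1}$ forces $F^{-1}(p)<\infty$, and $p>0$ forces $F^{-1}(p)>-\infty$), and since condition~\eqref{cond-0upper} together with $p\in(0,1)$ is enough to conclude that $F_n^{-1}(p)$ is eventually trapped in any neighbourhood of the form $[F^{-1}(q_1),F^{-1}(q_2)]$ with $q_1<p<q_2$ chosen so those quantiles are finite, the product is $o_{\mathbb{P}}(1)\cdot O_{\mathbb{P}}(1)=o_{\mathbb{P}}(1)$. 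This establishes~\eqref{eq-1a}.

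Finally, from~\eqref{eq-1a} the two displayed quantities $\int_0^p F_n^{-1}(u)\dd u - \int_0^p F^{-1}(u)\dd u$ and $\int_{-\infty}^{F^{-1}(p)}\big(F(x)-F_n(x)\big)\dd x$ differ by $o_{\mathbb{P}}(1)$, so one converges to $0$ in probability if and only if the other does; rewriting $\int_{-\infty}^{F^{-1}(p)}\big(F(x)-F_n(x)\big)\dd x\to 0$ as the equivalent statement~\eqref{eq-1a-b} yields the claimed equivalence of~\eqref{eq-1a-a} and~\eqref{eq-1a-b}, completing the proof.

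I do not anticipate a genuine obstacle: the only point requiring a moment's care is the asymptotic boundedness of $F_n^{-1}(p)$, which is exactly the same mild argument used (and referenced via \citet{s2003}) in the upper-layer development, and which does not require continuity of $F^{-1}$ at $p$ — only that one can sandwich $p$ between two probability levels at which the population quantile is finite. Everything else is a direct transcription of the upper-layer corollary's proof with the sign of $\Rem$ flipped, which does not affect any of the limiting statements.
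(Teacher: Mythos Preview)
Your proposal is correct and takes essentially the same approach as the paper: apply Theorem~\ref{theorem-0lower} with $G=F_n$, then use bound~\eqref{r-prop-1upper} together with the finiteness of $F^{-1}(p)$ and the asymptotic boundedness of $F_n^{-1}(p)$ under condition~\eqref{cond-0upper} to conclude $\Rem(p;F,F_n)=o_{\mathbb{P}}(1)$. One cosmetic remark: the finiteness of $F^{-1}(p)$ follows simply from $p\in(0,1)$ and does not require invoking $F\in\mathcal{F}^{-}_{1}$.
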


\begin{proof} 
The corollary follows from Theorem~\ref{theorem-0lower} and bound~\eqref{r-prop-1upper} with $F_n$ instead of $G$, because $F^{-1}(p)$ is finite and $F_n^{-1}(p)$ is asymptotically bounded, and therefore condition~\eqref{cond-0upper} implies $\Rem(p;F,G)=o_{\mathbb{P}}(1)$ due to bound~\eqref{r-prop-1upper}. This establishes statement~\eqref{eq-1a}, which implies the equivalence of statements~\eqref{eq-1a-a} and~\eqref{eq-1a-b}, and concludes the proof of Corollary~\ref{corollary-1lower}. 
\end{proof}

\begin{example}[SRS] \label{example-lower1}
Condition~\eqref{cond-0upper} with $F_{n,\srs}$ instead of $F_n$ is satisfied by the Glivenko-Cantelli theorem. Furthermore, by linearity of functional~\eqref{linear-lower}, the integral $\int_{-\infty }^{F^{-1}(p)}  F_n(x) \dd x$ is the arithmetic mean $n^{-1}\sum_{i=1}^n Y_{i,\zero}$ of $n$ independent copies of the random variable 
\begin{align}\label{rv-lower}
Y_{\zero}(p)
&=\int_{-\infty }^{F^{-1}(p)}  \mathds{1}\{X\le x\} \dd x   
\notag 
\\
&=\big( F^{-1}(p)-X \big)^{+}. 
\end{align}
The random variable $Y_{\zero}(p)$ has a finite first moment because $F\in \mathcal{F}^{-}_{1}$  (see Lemma~\ref{lemma-1lower} for details), and so statement~\eqref{eq-1a-b} with $F_{n,\srs}$ instead of $F_n$ holds. By Corollary~\ref{corollary-1lower}, therefore, the empirical lower-level integral $\int_0^p F_{n,\srs}^{-1}(u)\dd u $ is a consistent estimator of $\int_0^p F^{-1}(u)\dd u $. In summary, when $F\in \mathcal{F}^{-}_{1}$, we have 
\begin{equation}\label{consistency-lower}
\int_0^p F_{n,\srs}^{-1}(u)\dd u  \stackrel{\mathbb{P}}{\to} \int_0^p F^{-1}(u)\dd u . 
\end{equation}
\end{example}

\begin{note}[dependent data]
Although we work with the case $p\in (0,1)$, let us set $p=1$ for a moment. In this case, statement~\eqref{consistency-lower} turns into the law of large numbers 
\begin{equation}\label{mean-1}
\mu_{n,\srs}=\int_0^1 F_{n,\srs}^{-1}(u)\dd u\stackrel{\mathbb{P}}{\to} \mu  , 
\end{equation}
which of course holds under SRS. A departure from this sampling design leads to ergodic theory, and in particular to sequences $X_1,X_2,\dots $ that are stationary and ergodic. Hence, it is only natural that results of the type of statement~\eqref{consistency-lower} when $p\in (0,1)$ have been established within the class of stationary and ergodic sequences \citep[e.g.,][and references therein]{abdghw1996,gh1997,dz2002,hz2005}. It is also useful to point out that while the latter two references deal with the integral $\int_0^p F^{-1}(u)\dd u $, the first two references deal with the more general integral $\int_0^1 F^{-1}(u)w(u)\dd u $, which we shall discuss in more detail in  Section~\ref{sect-L-functional}. 
\end{note}

\begin{corollary}[bias]\label{corollary-2lower} 
Suppose that  $F\in \mathcal{F}^{-}_{1}$, and let $F_1,F_2,\ldots \in \mathcal{F}^{-}_{1}$ be any sequence of cdf's that are unbiased estimators of $F$, that is, satisfy  condition~\eqref{cond-0bupper}, and let these cdf's also be such that condition~\eqref{cond-0bupper-bias} is satisfied.  Then $\int_0^p F_n^{-1}(u)\dd u $ is a non-negatively biased estimator of $\int_0^p F^{-1}(u)\dd u $. 
\end{corollary}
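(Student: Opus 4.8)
The plan is to mirror exactly the structure of the proof of Corollary~\ref{corollary-2upper}, but invoking Theorem~\ref{theorem-0lower} in place of Theorem~\ref{theorem-0upper}. The key point is that equation~\eqref{eq-1lower} has the remainder term $\Rem(p;F,G)$ entering with a \emph{plus} sign, whereas in~\eqref{eq-1upper} it entered with a minus sign; since $\Rem$ is always non-negative, this flips the sign of the bias.

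First I would set $G=F_n$ in Theorem~\ref{theorem-0lower} and take expectations on both sides of~\eqref{eq-1lower}. Using the unbiasedness condition~\eqref{cond-0bupper}, namely $\mathbb{E}(F_n(x))=F(x)$ for all $x$, together with Fubini (justified because the integrand $F(x)-F_n(x)$ is bounded by $1$ in absolute value and is supported on the finite-length interval $(-\infty,F^{-1}(p))$ only after one notes that $F(x)-F_n(x)\to$ integrable tails — more precisely, $\int_{-\infty}^{F^{-1}(p)}F(x)\,\mathrm{d}x<\infty$ because $F\in\mathcal{F}^{-}_{1}$, and similarly for $F_n$ almost surely), the integral term $\mathbb{E}\big(\int_{-\infty}^{F^{-1}(p)}(F(x)-F_n(x))\,\mathrm{d}x\big)$ vanishes. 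This leaves
\begin{equation*}
\mathrm{Bias}_n^{\zero}(p):=\mathbb{E}\bigg(\int_0^p F_n^{-1}(u)\,\mathrm{d}u\bigg)-\int_0^p F^{-1}(u)\,\mathrm{d}u=\mathbb{E}\big(\Rem(p;F,F_n)\big).
\end{equation*}
Then I would observe that $\Rem(p;F,F_n)\ge 0$ by Theorem~\ref{theorem-0upper} (the non-negativity assertion there holds for any pair of cdf's), so the right-hand side is non-negative; it is finite because of bound~\eqref{r-prop-1upper}, which gives $\Rem(p;F,F_n)\le |F_n^{-1}(p)-F^{-1}(p)|\cdot\sup_x|F_n(x)-F(x)|\le |F_n^{-1}(p)|+|F^{-1}(p)|$, and the latter has finite expectation by condition~\eqref{cond-0bupper-bias} and finiteness of $F^{-1}(p)$. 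Hence $\mathrm{Bias}_n^{\zero}(p)\in[0,\infty)$, which is the claim.

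The main obstacle — really the only subtlety — is the interchange of expectation and integration that makes the linear term disappear: one must confirm that $\mathbb{E}\int_{-\infty}^{F^{-1}(p)}|F(x)-F_n(x)|\,\mathrm{d}x<\infty$ so that Fubini applies and the expected value of the (signed) linear term is genuinely zero rather than an indeterminate $\infty-\infty$. This follows because $\int_{-\infty}^{F^{-1}(p)}F(x)\,\mathrm{d}x=\mathbb{E}((F^{-1}(p)-X)^{+})<\infty$ since $F\in\mathcal{F}^{-}_{1}$, and $\mathbb{E}\int_{-\infty}^{F^{-1}(p)}F_n(x)\,\mathrm{d}x=\int_{-\infty}^{F^{-1}(p)}\mathbb{E}(F_n(x))\,\mathrm{d}x=\int_{-\infty}^{F^{-1}(p)}F(x)\,\mathrm{d}x<\infty$ by monotone convergence applied to the non-negative integrand $F_n(x)$. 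Everything else is a sign bookkeeping exercise, and the proof is essentially two or three lines once Theorem~\ref{theorem-0lower} is in hand, exactly parallel to Corollary~\ref{corollary-2upper}.
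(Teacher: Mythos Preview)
Your proposal is correct and follows essentially the same route as the paper: apply Theorem~\ref{theorem-0lower} with $G=F_n$, take expectations so that the linear term vanishes by unbiasedness~\eqref{cond-0bupper}, and conclude $\mathrm{Bias}_n^{\zero}(p)=\mathbb{E}\big(\Rem(p;F,F_n)\big)\in[0,\infty)$ with finiteness coming from bound~\eqref{r-prop-1upper} and condition~\eqref{cond-0bupper-bias}. The paper's proof is actually terser than yours---it does not spell out the Fubini justification for killing the linear term---so your added discussion of integrability is a welcome elaboration rather than a deviation.
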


\begin{proof} 
Using Theorem~\ref{theorem-0lower} with $F_n$ instead of $G$, and also recalling that the remainder term $\Rem(p;F,G)$ is non-negative, we have 
\begin{align*}
\mathrm{Bias}_n^{\zero}(p):= &\mathbb{E}\bigg(\int_0^p F_n^{-1}(u)\dd u\bigg)  -\int_0^p F^{-1}(u)\dd u 
\\
= &  \mathbb{E}\big(\Rem(p;F,F_n)\big) 
\\
\in & [0,\infty )  
\end{align*}
for every $n\in \mathbb{N}$.  
Note that $\mathbb{E}\big(\Rem(p;F,F_n)\big) $ is finite because of 
bound~\eqref{r-prop-1upper} and condition~\eqref{cond-0bupper-bias}. 
This establishes Corollary~\ref{corollary-2lower}. 
\end{proof}

\begin{example}[SRS] \label{example-lower2}
When $F_n$ is $ F_{n,\srs}$, condition~\eqref{cond-0bupper} is satisfied. Furthermore, condition~\eqref{cond-0bupper-bias} is satisfied when $F\in \mathcal{T}_{\varepsilon}$  for some $\varepsilon>0$ (recall Example~\ref{example-upper2} for details). Hence, we conclude from  Corollary~\ref{corollary-2lower} that when $F\in \mathcal{F}^{-}_{1} \cap \mathcal{T}_{\varepsilon}$, the empirical estimator $\int_0^p F_{n,\srs}^{-1}(u)\dd u  $ of the lower-layer integral $ \int_0^p F^{-1}(u)\dd u $, although being consistent by Example~\ref{example-lower1}, has the positive bias 
\begin{equation}
\mathrm{Bias}_{n,\srs}^{\zero}(p):= \mathbb{E}\big(\Rem(p;F,F_{n,\srs})\big) \in [0,\infty ) . 
\label{bias-lower-srs}
\end{equation}
\end{example}

\begin{note}\label{note-epsilon}
The condition that $F\in \mathcal{F}^{-}_{1} \cap \mathcal{T}_{\varepsilon}$  for some $\varepsilon>0$ can equivalently be reformulated as  $F\in \mathcal{F}^{-}_{1} \cap \mathcal{T}^{+}_{\varepsilon}$  for some $\varepsilon>0$, where $ \mathcal{T}^{+}_{\varepsilon}$ consists of all those cdf's that satisfy $x^{\varepsilon}(1-F(x))\to 0$ when $x\to \infty $. This reformulation is possible due to the fact that  $F\in \mathcal{F}^{-}_{1} $ implies $|x|^{\varepsilon}F(x)\to 0$ when $x\to -\infty $ for every $\varepsilon \in (0,1)$, and we need this condition to hold for just one $\varepsilon>0$. For those researchers who think in terms of moments, a sufficient condition that ensures the existence of $\varepsilon>0$ such that $F\in \mathcal{F}^{-}_{1} \cap \mathcal{T}^{+}_{\varepsilon}$ would be the requirement that 
\begin{equation}
\mathbb{E}(X^{-})<\infty \quad \text{and} \quad \mathbb{E}((X^{+})^{\varepsilon})<\infty 
\label{req-lower}
\end{equation}
for some $\varepsilon >0$, where $X \sim F$. Note that requirement~\eqref{req-lower} can equivalently be rewritten as   $F\in \mathcal{F}^{-}_{1} \cap \mathcal{F}^{+}_{\varepsilon}$. 
\end{note}

\begin{note}\label{example-lower2a} 
Let $F\in \mathcal{F}_{1}$, where $ \mathcal{F}_{1}:=\mathcal{F}^{-}_{1} \cap \mathcal{F}^{+}_{1} $. Hence, $F\in \mathcal{F}_{1}$ is equivalent to saying that $\int_0^1 F^{-1}(u)\dd u\in \mathbb{R}$, which is the same as saying that the random variable $X\sim F$ has a finite first moment $\mathbb{E}(X)\in \mathbb{R}$. When in addition to this moment requirement, condition~\eqref{cond-0bupper} is also satisfied, we have the equation 
\[
\mathrm{Bias}_{n}^{\zero}(p)=- \mathrm{Bias}_{n}^{\one}(p)
\]
because, due to equation~\eqref{eq-0} with $F_n$ instead of $G$,  
\begin{align*}
\mathrm{Bias}_{n}^{\one}(p)+\mathrm{Bias}_{n}^{\zero}(p)
&=\mathbb{E}\bigg(\int_0^1 F_n^{-1}(u)\dd u -\int_0^1 F^{-1}(u)\dd u \bigg) 
\\
& = \mathbb{E}\bigg(\int^{\infty }_{-\infty}  \big( F(x)- F_n(x)\big) \dd x \bigg) 
\\
&=\int^{\infty }_{-\infty}  \mathbb{E}\big( F(x)- F_n(x)\big) \dd x ,   
\end{align*}
where the right-hand side is equal to $0$ due to condition~\eqref{cond-0bupper}. 
\end{note}

\begin{corollary}\label{corollary-3lower} 
Suppose that  $F\in \mathcal{F}^{-}_{1}$, and let $F_1,F_2,\ldots \in \mathcal{F}^{-}_{1}$ be any sequence of cdf's satisfying condition~\eqref{cond-1upper} and also condition~\eqref{cond-2upper} with some normalizing constants $A_n\to \infty $ when $n\to \infty $. Then 
\begin{equation}\label{eq-1da}
A_n\left( \int_0^p F_n^{-1}(u) \dd u - \int_0^p F^{-1}(u) \dd u \right) 
=A_n\int_{-\infty }^{F^{-1}(p)}  \big( F(x)- F_n(x)\big) \dd x +o_{\mathbb{P}}(1)  
\end{equation}
and, therefore, the convergence-in-distribution statement 
\begin{equation}\label{eq-1da-a}
A_n\left( \int_0^p F_n^{-1}(u) \dd u - \int_0^p F^{-1}(u) \dd u \right) 
\stackrel{d}{\to}  \mathcal{L}_{\zero}(p)
\end{equation}
holds if and only if 
\begin{equation}\label{eq-1da-b}
A_n\left( \int_{-\infty }^{F^{-1}(p)}  F_n(x) \dd x - \int_{-\infty }^{F^{-1}(p)}  F(x) \dd x \right) 
\stackrel{d}{\to}  - \mathcal{L}_{\zero}(p) , 
\end{equation}
where $\mathcal{L}_{\zero}(p)$ is a random variable determined by statement~\eqref{eq-1da-b}.  
\end{corollary}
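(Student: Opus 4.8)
The plan is to mirror the proof of Corollary~\ref{corollary-3upper}, replacing Theorem~\ref{theorem-0upper} by Theorem~\ref{theorem-0lower}. Taking $G=F_n$ in equation~\eqref{eq-1lower} and multiplying through by $A_n$ gives
\[
A_n\left( \int_0^p F_n^{-1}(u)\dd u - \int_0^p F^{-1}(u)\dd u \right)
= A_n\int_{-\infty}^{F^{-1}(p)} \big( F(x)-F_n(x)\big) \dd x + A_n\,\Rem(p;F,F_n),
\]
so establishing~\eqref{eq-1da} reduces to showing that the remainder contribution is asymptotically negligible, i.e.\ $A_n\,\Rem(p;F,F_n)=o_{\mathbb{P}}(1)$.

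First I would invoke bound~\eqref{r-prop-1upper}, which Theorem~\ref{theorem-0lower} asserts $\Rem(p;F,F_n)$ continues to satisfy, to write
\[
0 \le A_n\,\Rem(p;F,F_n) \le \big| F_n^{-1}(p)-F^{-1}(p)\big|\cdot A_n\sup_{x\in\mathbb{R}}\big| F_n(x)-F(x)\big|.
\]
By condition~\eqref{cond-1upper} the first factor on the right is $o_{\mathbb{P}}(1)$, and by condition~\eqref{cond-2upper} the second factor is $O_{\mathbb{P}}(1)$; their product is therefore $o_{\mathbb{P}}(1)$, which proves~\eqref{eq-1da}.

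To deduce the equivalence of~\eqref{eq-1da-a} and~\eqref{eq-1da-b} from~\eqref{eq-1da}, I would observe that the leading term on the right-hand side of~\eqref{eq-1da} is
\[
A_n\int_{-\infty}^{F^{-1}(p)} \big( F(x)-F_n(x)\big) \dd x
= -A_n\left( \int_{-\infty}^{F^{-1}(p)} F_n(x)\dd x - \int_{-\infty}^{F^{-1}(p)} F(x)\dd x \right),
\]
so, by Slutsky's lemma, the left-hand side of~\eqref{eq-1da-a} converges in distribution to $\mathcal{L}_{\zero}(p)$ if and only if the expression on the left-hand side of~\eqref{eq-1da-b} converges in distribution to $-\mathcal{L}_{\zero}(p)$, which is exactly statement~\eqref{eq-1da-b}, with $\mathcal{L}_{\zero}(p)$ the random variable it determines.

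The argument involves no genuine difficulty beyond this bookkeeping; the one point requiring care is the sign flip, which is precisely what distinguishes Theorem~\ref{theorem-0lower} (where the remainder enters with a $+$) from Theorem~\ref{theorem-0upper} (where it enters with a $-$), and is the reason the limit in~\eqref{eq-1da-b} is $-\mathcal{L}_{\zero}(p)$ rather than $\mathcal{L}_{\zero}(p)$. I would also note in passing that, unlike Corollary~\ref{corollary-3upper} where condition~\eqref{cond-1upper} could be dropped in the presence of quantile continuity via Lemma~\ref{lemma-1}, here condition~\eqref{cond-1upper} is simply imposed as a hypothesis, so no such reduction is needed.
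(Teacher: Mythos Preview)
Your proof is correct and follows essentially the same approach as the paper's own proof, which applies Theorem~\ref{theorem-0lower} with $G=F_n$ and refers back to the proof of Corollary~\ref{corollary-3upper} for the fact that $A_n\,\Rem(p;F,F_n)=o_{\mathbb{P}}(1)$. You have simply spelled out that cross-reference explicitly (via bound~\eqref{r-prop-1upper} and conditions~\eqref{cond-1upper}--\eqref{cond-2upper}) and made the Slutsky step and sign flip visible.
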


\begin{proof} 
Using  Theorem~\ref{theorem-0lower} with $F_n$ instead of $G$, we only need to show that $A_n\Rem(p;F,F_n)$ converges in probability to $0$, which we showed in the proof of Corollary~\ref{corollary-3upper}.  This establishes equation~\eqref{eq-1da}, which implies the equivalence of statements~\eqref{eq-1da-a} and~\eqref{eq-1da-b}, and concludes the proof of Corollary~\ref{corollary-3lower}. 
\end{proof}

\begin{example}[SRS] \label{example-lower3} 
When $F_n$ is $F_{n,\srs}$, condition~\eqref{cond-2upper}, which is a requirement in Corollary~\ref{corollary-3lower}, is satisfied by the Kolmogorov-Smirnov theorem. Hence, condition~\eqref{cond-1upper} is satisfied as well, provided that, according to Lemma~\ref{lemma-1}, the quantile function $F^{-1}$ is continuous at the point $p$. 
Furthermore, since the integral $\int_{-\infty }^{F^{-1}(p)}  F_n(x) \dd x$ is the arithmetic mean of $n$ independent copies of random variable~\eqref{rv-lower}, 
we conclude that $\int_{-\infty }^{F^{-1}(p)}  F_n(x) \dd x$ satisfies the central limit theorem provided that  random variable~\eqref{rv-lower} has a finite second moment. This is so provided that $F\in \mathcal{F}^{-}_{2}$ (see Lemma~\ref{lemma-1lower} for details), where $\mathcal{F}^{-}_{2}$ denotes the set of all cdf's such that the integral $\int_0^p \big(F^{-1}(u)\big)^2\dd u $ is finite. 
Note that $F\in \mathcal{F}^{-}_{2}$ is equivalent to saying that the negative part $X^{-}$ of the random variable $X\sim F$ has a finite second moment $\mathbb{E}((X^{-})^2)<\infty $. 
In summary, when $F\in \mathcal{F}^{-}_{2}$ and the quantile function $F^{-1}$ is continuous at the point $p$, 
we have the asymptotic normality result 
\begin{equation}\label{normality-lower}
\sqrt{n}\left(\int_0^p F_{n,\srs}^{-1}(u)\dd u  - \int_0^p F^{-1}(u)\dd u \right) 
\stackrel{d}{\to} \mathcal{N}(0,\sigma_{\zero}^2(p)) , 
\end{equation}
where $\sigma_{\zero}^2(p)$ is the variance of the random variable $Y_{\zero}(p)$. The variance can be expressed by the formula 
\begin{equation}
\sigma_{\zero}^2(p)
=\int_{-\infty }^{F^{-1}(p)} \int_{-\infty }^{F^{-1}(p)} 
 \big( F(x \wedge y) -F(x)F(y) \big) \dd x  \dd y. 
\label{normality-lower-var2}
\end{equation}
\end{example}

\begin{note}[dependent data]
Limiting distributions extending statement~\eqref{normality-lower} to classes of dependent random sequences were studied by \citet{dz2003}, \citet{dz2004}, and \citet{dksz2007}. 
The knowledge of empirical processes and their asymptotic behaviour based on such random sequences becomes  particulary useful, and we refer to \citet{DMS2002} for details on the topic. Due to space considerations, we only note here that dependence structures affect asymptotic variances, normalizing constants, and even limiting distributions. For a bird's-eye view  of the variety of normalizing constants and limiting distributions in the case of lower-layer integrals (also known as convexifications, absolute Lorenz curves, and by some other names), we refer to \citet{dz2004}. 
\end{note}

\begin{computation}[SRS] \label{computation-lower} 
Using equation~\eqref{quantile-upper-2} for calculating the quantile $F_{n,\srs}^{-1}(u)$, we have 
\begin{align}
\int_0^p F_{n,\srs}^{-1}(u)\dd u
&= \sum_{i=1}^{\lceil np \rceil } \int_{(i-1)/n}^{i/n}F_{n,\srs}^{-1}(u)\dd u 
-\int_{p}^{\lceil np \rceil /n} F_{n,\srs}^{-1}(u)\dd u 
\notag 
\\
&= \underbrace{{1\over n}\sum_{i=1}^{\lceil np \rceil } X_{i :n}}_{\textrm{main term}} 
-\underbrace{ \vphantom{\sum_{i=\lceil np \rceil +1}^{n}} \left( {\lceil np \rceil \over n}- p\right) X_{\lceil np\rceil :n}}_{\textrm{asymptotically negligible term}} 
\label{quantile-lower-3xyz}
\\
&= {\lceil np \rceil \over n}  
\underbrace{{1\over \lceil np \rceil }\sum_{i=1}^{\lceil np \rceil } X_{i :n}}_{\textrm{right-trimmed mean}} 
-\underbrace{ \vphantom{\sum_{i=\lceil np \rceil +1}^{n}} \left( {\lceil np \rceil \over n}- p\right) X_{\lceil np\rceil :n}}_{\textrm{asymptotically negligible term}}.  
\label{quantile-lower-3ii}
\end{align}
These expressions give computationally-friendly formulas for the empirical lower-layer integral, which is connected to  the right-trimmed mean \citep[e.g.,][]{s1973} via equation~\eqref{quantile-lower-3ii}. 
\end{computation}

\begin{computation}[Uniform Vervaat process] \label{computation-vervaat} 
Using formulas~\eqref{rv-lower} and~\eqref{quantile-lower-3xyz} in the case of independent and uniformly on $[0,1]$ distributed random variables $U_1,\dots , U_n$, we obtain the formulas 
\begin{align*}
V_{n,\srs}^U(p)
&=\int_0^p E_{n,\srs}^{-1}(u) \dd u 
 + {1\over n}\sum_{i=1}^n (p-U_i)^{+} - p^2 
\\
&={1\over n}\sum_{i=1}^{\lceil np \rceil } U_{i :n}
-\left( {\lceil np \rceil \over n}- p\right) U_{\lceil np\rceil :n}
 + {1\over n}\sum_{i=1}^n (p-U_i)^{+} - p^2  
\end{align*}
that we used to visualize the normalized Vervaat process 
\[
n\,V_{n,\srs}^U(p), \quad 0\le p \le 1,  
\]
in Figure~\ref{fig:vervaat}. The appropriateness of the normalization $n$ for the process $V_n^U$ was established by \citet{v1972a,v1972b}, who showed that $n\,V_{n,\srs}^U$ converges weakly to a half of the squared Brownian bridge, that is, to $B^2/2$, when $n\to \infty $. On the other hand, the normalized uniform empirical process 
\[
n^{1/2}(E_{n,\srs}(p)- p), \quad 0\le p \le 1,  
\]
which we have visualized in Figure~\ref{fig:empirical}, converges weakly to the Brownian bridge $B$ when $n\to \infty $ \citep[e.g.,][]{b1999}. Hence, what we essentially see in the two panels of Figure~\ref{figure-vervaat-empirical} are sample paths of a half of the squared Brownian bridge $B^2/2$ (left-hand panel) and of the classical Brownian bridge $B$ (right-hand panel). For additional insights into these and related results, we refer to  \citet{cz1999,cs2001}, and \citet{ccfsz2002}. 
\end{computation}

\section{The middle-layer integral}
\label{sect-middle}

Let $\mathcal{F}$ denote the class of all cdf's. 
Fix any pair of probability levels $p_1$ and $p_2$ such that 
\[
0<p_1<p_2<1. 
\]
The middle-layer integral $\int_{p_1}^{p_2} F^{-1}(u)\dd u $ is finite for every $F\in \mathcal{F}$. The functional 
\[
\mathcal{F} \ni F \mapsto \int_{p_1}^{p_2} F^{-1}(u)\dd u \in \mathbb{R}
\]
is not linear, but when developing statistical inference it can be approximated by the linear one
\begin{equation}\label{linear-middle}
\mathcal{F} \ni G \mapsto \int_{F^{-1}(p_1)}^{F^{-1}(p_2)}  G(x) \dd x \in \mathbb{R} . 
\end{equation}
Indeed, we shall see 
from the following corollaries that the difference between the empirical middle-layer integral  $\int_{p_1}^{p_2} F_n^{-1}(u)\dd u$ and its population counterpart $\int_{p_1}^{p_2} F^{-1}(u)\dd u$ gets asymptotically close to the difference between the integrals $\int_{F^{-1}(p_1)}^{F^{-1}(p_2)}  F(x) \dd x $ and  $\int_{F^{-1}(p_1)}^{F^{-1}(p_2)}  F_n(x) \dd x$, where $F_n $, $n\in \mathbb{N}$, are cdf's approaching $F$ when the parameter $n$ grows indefinitely. The corollaries rely on the following theorem, whose proof is given in Appendix~\ref{appendix-2}.

\begin{theorem}\label{theorem-0middle}
Let $F$ and $G$ be any two cdf's. Then 
\begin{equation}\label{eq-1middle}
\int_{p_1}^{p_2} \big( G^{-1}(u)-F^{-1}(u) \big) \dd u 
=\int_{F^{-1}(p_1)}^{F^{-1}(p_2)}  \big( F(x)- G(x)\big) \dd x + \Rem(p_2;F,G)-\Rem(p_1;F,G), 
\end{equation}
where the non-negative remainder terms $\Rem(p_1;F,G)$  and $\Rem(p_2;F,G)$ are defined by equation~\eqref{qq-3upper} and satisfy bounds~\eqref{r-prop-0upper} and~\eqref{r-prop-1upper}. 
\end{theorem}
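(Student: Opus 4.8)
\textbf{Proof proposal for Theorem~\ref{theorem-0middle}.}
The plan is to reduce the middle-layer identity to the two one-sided identities already established in Theorems~\ref{theorem-0upper} and~\ref{theorem-0lower}, using the elementary additivity of the integral over the three probability layers and over the three corresponding regions on the real line. First I would observe that for any cdf $H\in\mathcal F$ the quantity $\int_{p_1}^{p_2}H^{-1}(u)\dd u$ is automatically finite, since $H^{-1}$ is monotone and hence bounded on the compact sub-interval $[p_1,p_2]\subset(0,1)$; thus no integrability class needs to be invoked and the statement makes sense for arbitrary $F,G\in\mathcal F$. The only mild nuisance is that $F$ itself need not lie in $\mathcal F^{+}_1$ or $\mathcal F^{-}_1$, so I cannot literally quote Theorems~\ref{theorem-0upper} and~\ref{theorem-0lower} as stated; I would instead re-run their (short) arguments restricted to the finite window, or — cleaner — first reduce to a truncated cdf and pass to the limit, or simply prove the identity directly by the change-of-variables/Fubini computation below, which costs little.

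The core computation I would carry out is the following. Write
\begin{align*}
\int_{p_1}^{p_2}\big(G^{-1}(u)-F^{-1}(u)\big)\dd u
&=\int_{p_1}^{p_2}\int_{F^{-1}(u)}^{G^{-1}(u)}\dd x\,\dd u
\end{align*}
(interpreting the inner integral with the usual sign convention when $G^{-1}(u)<F^{-1}(u)$), and then apply Fubini's theorem to swap the order of integration. For fixed $x$ the set of $u\in(p_1,p_2)$ with $F^{-1}(u)<x\le G^{-1}(u)$ is, up to endpoints, $\{u: G(x)<u\}\cap\{u:u\le F(x)\}\cap(p_1,p_2)$, using the standard Galois inequalities $F^{-1}(u)\le x\iff u\le F(x)$ and $x<G^{-1}(u)\iff G(x)<u$. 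Tracking the $x$-ranges and the contribution of each constraint produces $\int_{F^{-1}(p_1)}^{F^{-1}(p_2)}\big(F(x)-G(x)\big)\dd x$ as the ``bulk'' term together with two boundary corrections supported near $x=F^{-1}(p_1)$ and $x=F^{-1}(p_2)$; matching these against equation~\eqref{qq-3upper} identifies them as $-\Rem(p_1;F,G)$ and $+\Rem(p_2;F,G)$ respectively, which is exactly identity~\eqref{eq-1middle}. An equivalent and perhaps more transparent route is purely algebraic: apply Theorem~\ref{theorem-0upper} at level $p_1$ and again at level $p_2$ (on a truncation of $F$ if needed), giving expressions for $\int_{p_1}^1$ and $\int_{p_2}^1$; subtract them; the tails $\int^{\infty}$ beyond $F^{-1}(p_2)$ cancel, leaving $\int_{F^{-1}(p_1)}^{F^{-1}(p_2)}(F-G)\dd x$ plus $\Rem(p_2;F,G)-\Rem(p_1;F,G)$. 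Either way the bookkeeping is short.

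Finally, the assertions that $\Rem(p_1;F,G)$ and $\Rem(p_2;F,G)$ are non-negative and obey bounds~\eqref{r-prop-0upper} and~\eqref{r-prop-1upper} require no new work: these are properties of the single functional $(F,G)\mapsto\Rem(p;F,G)$ proved in Theorem~\ref{theorem-0upper}, applied verbatim at the two levels $p=p_1$ and $p=p_2$. I expect the main obstacle to be purely organizational rather than substantive — namely handling the sign bookkeeping in the Fubini swap when $G^{-1}$ and $F^{-1}$ cross, and making sure the reduction to Theorems~\ref{theorem-0upper}/\ref{theorem-0lower} is legitimate despite $F$ possibly failing the one-sided integrability conditions. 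Both are dispatched by a routine truncation argument (replace $F$ by $F$ conditioned to a large bounded interval, for which all integrals are trivially finite, establish~\eqref{eq-1middle} there, and let the truncation level tend to infinity using monotone/dominated convergence together with continuity of $u\mapsto F^{-1}(u)$ off a countable set), so no genuine difficulty arises.
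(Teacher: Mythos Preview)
Your proposal is correct, and both routes you sketch (direct Fubini on the difference, or subtracting two applications of Theorem~\ref{theorem-0upper}) lead to~\eqref{eq-1middle}. The paper, however, organizes the argument differently and in a way that sidesteps the truncation detour entirely. It first establishes, for a \emph{single} arbitrary cdf $H$, the finite ``integration-by-parts'' identity
\[
\int_{p_1}^{p_2} H^{-1}(u)\,\dd u \;=\; p_2\,H^{-1}(p_2) - p_1\,H^{-1}(p_1) - \int_{H^{-1}(p_1)}^{H^{-1}(p_2)} H(x)\,\dd x,
\]
proved directly via $X\stackrel{d}{=}H^{-1}(U)$ and Fubini on the bounded region, which requires no moment condition whatsoever. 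Writing this once for $F$ and once for $G$ and subtracting yields~\eqref{eq-1middle} after a short rearrangement that materializes the two $\Rem$ terms. The advantage of the paper's route is that it never leaves the bounded window, so the integrability nuisance you flagged simply never arises; the advantage of your subtraction route is conceptual economy (re-using Theorem~\ref{theorem-0upper}) at the price of the truncation step. One small caution on that step: ``conditioning'' $F$ to a bounded interval rescales $F$ on the interior and perturbs $F^{-1}(p_i)$; the cleaner device is the cap-and-floor $X_M=(X\vee(-M))\wedge M$, for which all four quantities in~\eqref{eq-1middle} are \emph{exactly} unchanged once $M$ is large enough that $F^{-1}(p_i),G^{-1}(p_i)\in(-M,M)$ for $i=1,2$, so no limiting argument is needed at all.
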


\begin{note} \label{note-4}
It is tempting to collapse  equations~\eqref{eq-0}, \eqref{eq-1upper}, \eqref{eq-1lower}, and~\eqref{eq-1middle} into one by relaxing the restriction $0<p_1<p_2<1$ to $0\le p_1<p_2 \le 1$. This is indeed possible by augmenting the definition of the remainder term $\Rem(p;F,G)$, which has so far been given only for $p\in (0,1)$, by setting $\Rem(p;F,G)$ to $0$ when $p\in \{0,1\}$. If we agree with this augmentation, then it is also imperative to replace $F^{-1}(p_1)$ by $-\infty $ when $p_1=0$ in the integral  
\begin{equation}\label{eq-1middle-extra}
\int_{F^{-1}(p_1)}^{F^{-1}(p_2)}  \big( F(x)- G(x)\big) \dd x
\end{equation}
on the right-hand side of equation~\eqref{eq-1middle}, and to also replace $F^{-1}(p_2)$ by $\infty $ when $p_2=1$. Note that  $F^{-1}(0)$ may or may not be equal to $-\infty $, and $F^{-1}(1)$ may or may not be equal to $\infty $. Hence, although the noted replacements may look artificial, they are absolutely necessary, as next Example~\ref{ex-counter} shows. 
Consequently, the role of infinities $-\infty $ and $\infty $ instead of $F^{-1}(p_1)$ and $F^{-1}(p_2)$  in integral~\eqref{eq-1middle-extra} when $p_1=0$ and $p_2=1$, respectively, is crucial. In summary, we recommend having four separate equations~\eqref{eq-0}, \eqref{eq-1upper}, \eqref{eq-1lower}, and~\eqref{eq-1middle} in the toolbox, instead of having just one general equation with a number of caveats and adjustments -- the four separate equations should help to avoid potential overlooks and pitfalls. 
\end{note}

\begin{example}\label{ex-counter}
Let $F\sim U_{[0,1]}$ and $G\sim U_{[0,2]}$ be two random variables with uniform on the intervals $[0,1]$ and $[0,2]$ distributions, respectively. That is, $F(x)=x$ when $0\le x \le 1$ and  $G(x)=x/2$ when $0\le x \le 2$. Consequently, $F^{-1}(t)=t$ and  $G^{-1}(t)=2t$ when $0\le t \le 1$. From these formulas, we obtain 
\begin{gather*}
\int_{0}^{1}  \big( G^{-1}(t) - F^{-1}(t)\big) \dd t = {1\over 2}, 
\\
\int_{F^{-1}(0)}^{F^{-1}(1)}  \big( F(x) - G(x)\big) \dd x = {1\over 4} , 
\\
\int_{-\infty}^{\infty}  \big( F(x) - G(x)\big) \dd x = {1\over 2} . 
\end{gather*}
Hence, 
\[
\int_{0}^{1}  \big( G^{-1}(t) - F^{-1}(t)\big) \dd t  
= \int_{-\infty}^{\infty}  \big( F(x) - G(x)\big) \dd x,  
\]
which is what equation~\eqref{eq-0} says, but 
\[
\int_{0}^{1}  \big( G^{-1}(t) - F^{-1}(t)\big) \dd t  
\neq \int_{F^{-1}(0)}^{F^{-1}(1)}  \big( F(x) - G(x)\big) \dd x  . 
\]
This concludes Example~\ref{ex-counter}. 
\end{example}

We are now ready to formulate and discuss several corollaries to Theorem~\ref{theorem-0middle}.

\begin{corollary}[consistency]\label{corollary-1middle} 
Let $F$ be any cdf, and let $F_1,F_2,\ldots $ be any sequence of cdf's satisfying condition~\eqref{cond-0upper}. Then 
\begin{equation}\label{eq-1b}
\int_{p_1}^{p_2} F_n^{-1}(u)\dd u - \int_{p_1}^{p_2}F^{-1}(u)  \dd u 
=\int_{F^{-1}(p_1)}^{F^{-1}(p_2)}  \big( F(x)- F_n(x)\big) \dd x +o_{\mathbb{P}}(1) 
\end{equation} 
and, therefore, the consistency statement 
\begin{equation}\label{eq-1b-a}
\int_{p_1}^{p_2} F_n^{-1}(u) \dd u \stackrel{\mathbb{P}}{\to} \int_{p_1}^{p_2} F^{-1}(u)\dd u 
\end{equation} 
holds if and only if 
\begin{equation}\label{eq-1b-b}
\int_{F^{-1}(p_1)}^{F^{-1}(p_2)}  F_n(x) \dd x \stackrel{\mathbb{P}}{\to} \int_{F^{-1}(p_1)}^{F^{-1}(p_2)}F(x) \dd x . 
\end{equation} 
\end{corollary}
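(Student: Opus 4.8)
The plan is to follow the same route as the proofs of Corollaries~\ref{corollary-1upper} and~\ref{corollary-1lower}, now invoking Theorem~\ref{theorem-0middle} in place of Theorems~\ref{theorem-0upper} and~\ref{theorem-0lower}. First I would apply Theorem~\ref{theorem-0middle} with $F_n$ in place of $G$, which rewrites the left-hand side of~\eqref{eq-1b} as
\[
\int_{F^{-1}(p_1)}^{F^{-1}(p_2)}\big(F(x)-F_n(x)\big)\dd x+\Rem(p_2;F,F_n)-\Rem(p_1;F,F_n).
\]
Since $0<p_1<p_2<1$, the quantiles $F^{-1}(p_1)$ and $F^{-1}(p_2)$ are finite real numbers, so this is an integral over a fixed bounded interval plus two correction terms; in particular no integrability issue arises, which is exactly why $F$ may be taken arbitrary here.

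The only point that needs an argument is that $\Rem(p_1;F,F_n)$ and $\Rem(p_2;F,F_n)$ are $o_{\mathbb{P}}(1)$. For each $i\in\{1,2\}$, bound~\eqref{r-prop-1upper} gives
\[
0\le\Rem(p_i;F,F_n)\le\big|F_n^{-1}(p_i)-F^{-1}(p_i)\big|\,\sup_{x\in\mathbb{R}}\big|F_n(x)-F(x)\big|,
\]
and the supremum is $o_{\mathbb{P}}(1)$ by hypothesis~\eqref{cond-0upper}, so it suffices to know that $F_n^{-1}(p_i)$ is stochastically bounded. I expect this to be the one genuinely substantive step, although it is only a routine ``escape'' argument: choose $M$ so large that $F(-M)<p_1$ and $F(M)>p_2$ (possible because $F$ is a proper cdf and $p_1,p_2\in(0,1)$), set $\delta:=\min\{p_1-F(-M),\,F(M)-p_2\}>0$, and note that on the event $\{\sup_{x}|F_n(x)-F(x)|<\delta\}$ one has $F_n(-M)<p_i\le F_n(M)$, hence $F_n^{-1}(p_i)\in[-M,M]$; since this event has probability tending to $1$, the family $\{F_n^{-1}(p_i)\}_{n\in\mathbb{N}}$ is $O_{\mathbb{P}}(1)$. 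Consequently each $\Rem(p_i;F,F_n)$ is a product of an $O_{\mathbb{P}}(1)$ factor and an $o_{\mathbb{P}}(1)$ factor, hence $o_{\mathbb{P}}(1)$, and so is their difference.

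Substituting this back gives equation~\eqref{eq-1b}. From~\eqref{eq-1b}, the difference of the empirical and population middle-layer integrals and the quantity $\int_{F^{-1}(p_1)}^{F^{-1}(p_2)}\big(F(x)-F_n(x)\big)\dd x$ agree up to an $o_{\mathbb{P}}(1)$ term, so one of them tends to $0$ in probability if and only if the other does; this is precisely the claimed equivalence of~\eqref{eq-1b-a} and~\eqref{eq-1b-b}, which completes the proof. As a closing remark, in the spirit of the comments the paper makes for the upper and lower layers, I would point out that no continuity of $F^{-1}$ at $p_1$ or $p_2$ is needed for consistency, and in fact, because $\big|\int_{F^{-1}(p_1)}^{F^{-1}(p_2)}(F-F_n)\dd x\big|\le\big(F^{-1}(p_2)-F^{-1}(p_1)\big)\sup_{x}|F_n(x)-F(x)|=o_{\mathbb{P}}(1)$, both limits in~\eqref{eq-1b-a} and~\eqref{eq-1b-b} hold unconditionally under~\eqref{cond-0upper}; the ``if and only if'' phrasing is retained only for uniformity with Corollaries~\ref{corollary-1upper} and~\ref{corollary-1lower}.
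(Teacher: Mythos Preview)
Your proof is correct and follows exactly the same route as the paper: apply Theorem~\ref{theorem-0middle} with $G=F_n$, use bound~\eqref{r-prop-1upper} together with the asymptotic boundedness of $F_n^{-1}(p_1)$ and $F_n^{-1}(p_2)$ to kill the two remainder terms, and read off the equivalence. You simply spell out the ``$F_n^{-1}(p_i)$ is asymptotically bounded'' step that the paper leaves implicit, and your closing observation that \eqref{eq-1b-a} and \eqref{eq-1b-b} both hold outright under~\eqref{cond-0upper} is a correct extra remark in the spirit of Example~\ref{example-middle1}.
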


\begin{proof} 
The corollary follows from Theorem~\ref{theorem-0middle} and bound~\eqref{r-prop-1upper} with $F_n$ instead of $G$, because for $p\in \{p_1,p_2\} \subset (0,1)$, $F^{-1}(p)$ is finite and $F_n^{-1}(p)$ is asymptotically bounded. Therefore  condition~\eqref{cond-0upper} implies $\Rem(p;F,F_n)=o_{\mathbb{P}}(1)$ for both $p=p_1$ and $p=p_2$, and thus, in turn, implies  statement~\eqref{eq-1b}.  This establishes the equivalence of statements~\eqref{eq-1b-a} and~\eqref{eq-1b-b}, and concludes the proof of Corollary~\ref{corollary-1middle}. 
\end{proof}

\begin{example}[SRS] \label{example-middle1}
Condition~\eqref{cond-0upper} with $F_{n,\srs}$ in place of $F_n$ is satisfied by the Glivenko-Cantelli theorem. Furthermore, by linearity of functional~\eqref{linear-middle}, the integral $\int_{F^{-1}(p_1)}^{F^{-1}(p_2)}  F_n(x) \dd x$ is the arithmetic mean $n^{-1}\sum_{i=1}^n Y_{i,\half}(p_1,p_2)$ of $n$ independent copies of the random variable 
\begin{align}
Y_{\half}(p_1,p_2)
&=\int_{F^{-1}(p_1)}^{F^{-1}(p_2)}  \mathds{1}\{X\le x\} \dd x   
\notag 
\\
&= \big( F^{-1}(p_2)-X \big)^{+} - \big( F^{-1}(p_1)-X \big)^{+}.  
\label{rv-middle}
\end{align}
The random variable $Y_{\half}(p_1,p_2)$ always has a finite first moment (see Note~\ref{note-5} for details). Consequently, $\int_{p_1}^{p_2} F_{n,\srs}^{-1}(u)\dd u $ is a consistent estimator of $\int_{p_1}^{p_2} F^{-1}(u)\dd u $, that is, for every cdf $F$, we have 
\begin{equation}\label{consistency-middle}
\int_{p_1}^{p_2} F_{n,\srs}^{-1}(u)\dd u  \stackrel{\mathbb{P}}{\to} \int_{p_1}^{p_2} F^{-1}(u)\dd u . 
\end{equation}
\end{example}

\begin{corollary}[bias]\label{corollary-2middle} 
Let $F$ be any cdf, and let $F_1,F_2,\ldots $ be any sequence of cdf's that are unbiased estimators of $F$, that is, satisfy  condition~\eqref{cond-0bupper}, and let the cdf's be such that condition~\eqref{cond-0bupper-bias} is satisfied with $p\in \{p_1,p_2\}$. Then 
\begin{align*}
\mathrm{Bias}_n^{\half}(p_1,p_2):= &\mathbb{E}\bigg(\int_{p_1}^{p_2} F_n^{-1}(u)\dd u\bigg)  -\int_{p_1}^{p_2} F^{-1}(u)\dd u 
\\
= &  \mathbb{E}\big(\Rem(p_2;F,F_n)\big) - \mathbb{E}\big(\Rem(p_1;F,F_n)\big) 
\\
\in & \mathbb{R} . 
\end{align*}
\end{corollary}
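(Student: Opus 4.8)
The plan is to apply Theorem~\ref{theorem-0middle} with $G=F_n$ and then take expectations of both sides, the only genuine work being the measure-theoretic bookkeeping needed to make this legitimate. For almost every $\omega$ identity~\eqref{eq-1middle} reads
\[
\int_{p_1}^{p_2}\big(F_n^{-1}(u)-F^{-1}(u)\big)\dd u
=\int_{F^{-1}(p_1)}^{F^{-1}(p_2)}\big(F(x)-F_n(x)\big)\dd x+\Rem(p_2;F,F_n)-\Rem(p_1;F,F_n),
\]
where all three terms on the right are finite-valued (recall that $0<p_1<p_2<1$ forces $F^{-1}(p_1)$ and $F^{-1}(p_2)$ to be finite). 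It therefore suffices to compute the expectation of each of these three terms and to check, along the way, that each of them is integrable in $\omega$.

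First I would dispose of the integral term. Its integrand $F(x)-F_n(x)$ is bounded in absolute value by $1$ on the finite interval $[F^{-1}(p_1),F^{-1}(p_2)]$, so the integral is a bounded random variable; using the joint measurability of $F_n$ together with Fubini's theorem, its expectation equals $\int_{F^{-1}(p_1)}^{F^{-1}(p_2)}\mathbb{E}\big(F(x)-F_n(x)\big)\dd x$, which vanishes by the unbiasedness assumption~\eqref{cond-0bupper}. Next I would treat the two remainder terms. By Theorem~\ref{theorem-0upper} each $\Rem(p_i;F,F_n)$ is non-negative, and by bound~\eqref{r-prop-1upper} — recalling $\sup_{x}|F_n(x)-F(x)|\le 1$ — it is dominated by $|F_n^{-1}(p_i)-F^{-1}(p_i)|$. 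Since $F^{-1}(p_i)$ is a finite constant and $\mathbb{E}\big(F_n^{-1}(p_i)\big)\in\mathbb{R}$ by~\eqref{cond-0bupper-bias} (so $F_n^{-1}(p_i)$ is integrable), each $\Rem(p_i;F,F_n)$ is integrable with $\mathbb{E}\big(\Rem(p_i;F,F_n)\big)\in[0,\infty)$ for $i=1,2$.

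Putting these pieces together, the right-hand side of the displayed identity is integrable, hence so is the left-hand side, so the expectation $\mathbb{E}\big(\int_{p_1}^{p_2}F_n^{-1}(u)\dd u\big)$ is a finite real number and taking expectations term by term is justified; this yields
\[
\mathrm{Bias}_n^{\half}(p_1,p_2)=\mathbb{E}\big(\Rem(p_2;F,F_n)\big)-\mathbb{E}\big(\Rem(p_1;F,F_n)\big),
\]
a difference of two finite non-negative numbers, hence an element of $\mathbb{R}$. I do not anticipate any real obstacle here; the single point demanding care is precisely the justification that $\mathbb{E}$ and $\int$ may be interchanged and that every expectation written down is a bona fide finite real rather than a merely formal symbol — this is exactly where~\eqref{cond-0bupper},~\eqref{cond-0bupper-bias}, and the finiteness of $F^{-1}(p_1),F^{-1}(p_2)$ enter. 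Unlike Corollaries~\ref{corollary-2upper} and~\ref{corollary-2lower}, no sign is claimed for $\mathrm{Bias}_n^{\half}(p_1,p_2)$, and none should be: the two remainder expectations need not be comparable, so the middle-layer bias can be positive, negative, or zero depending on $F$, $F_n$, $p_1$, and $p_2$ (this could be illustrated, in the SRS case, by specializing the difference of the two remainder terms that already appears, up to sign, in~\eqref{quantile-lower-3xyz} applied with $p=p_2$ and with $p=p_1$).
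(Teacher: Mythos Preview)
Your proposal is correct and follows essentially the same route as the paper: apply Theorem~\ref{theorem-0middle} with $G=F_n$, take expectations, use unbiasedness~\eqref{cond-0bupper} to kill the integral over $[F^{-1}(p_1),F^{-1}(p_2)]$, and invoke bound~\eqref{r-prop-1upper} together with~\eqref{cond-0bupper-bias} to ensure each $\mathbb{E}\big(\Rem(p_i;F,F_n)\big)$ is finite. The paper's proof is a single sentence (``Theorem~\ref{theorem-0middle} with $F_n$ instead of $G$ implies Corollary~\ref{corollary-2middle}''), so your version simply makes explicit the integrability checks that the paper leaves to the reader by analogy with Corollaries~\ref{corollary-2upper} and~\ref{corollary-2lower}.
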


\begin{proof} 
Theorem~\ref{theorem-0middle} with $F_n$ instead of $G$ implies Corollary~\ref{corollary-2middle}. 
\end{proof}

\begin{example}[SRS] \label{example-middle2}
When $F_n$ is $ F_{n,\srs}$, condition~\eqref{cond-0bupper} is satisfied. Furthermore, condition~\eqref{cond-0bupper-bias} is satisfied when $F\in \mathcal{T}_{\varepsilon}$ for some $\varepsilon>0$ (recall Example~\ref{example-upper2} for details). Hence, we conclude from  Corollary~\ref{corollary-2middle} that when $F \in \mathcal{T}_{\varepsilon}$, the estimator $\int_{p_1}^{p_2} F_{n,\srs}^{-1}(u)\dd u  $ of the middle-layer integral $ \int_{p_1}^{p_2} F^{-1}(u)\dd u $, although being consistent by Example~\ref{example-middle1}, has the bias 
\[
\mathrm{Bias}_{n,\srs}^{\half}(p_1,p_2)=\mathbb{E}\big(\Rem(p_2;F,F_{n,\srs})\big) - \mathbb{E}\big(\Rem(p_1;F,F_{n,\srs})\big) \in \mathbb{R},  
\]
which might be any real number.  Note that a sufficient condition for $F\in \mathcal{T}_{\varepsilon}$ for some $\varepsilon>0$ is the requirement $F\in \mathcal{F}_{\varepsilon}$, that is, $\mathbb{E}(|X|^{\varepsilon})<\infty$ for some $\varepsilon>0$. 
\end{example}

\begin{remark}  
Naturally, we can get more insights into the bias if we are willing to make additional assumptions about $F$. For example, following the studies of \citet{gh2006,gh2007} on trimmed means, if we assume that $F$ has a pdf $f=F'$ that is strictly positive and continuous at
the quantiles $F^{-1}(p_1)$ and $F^{-1}(p_2)$, then 
\begin{equation}\label{bias-as-form}
\mathrm{Bias}_{n,\srs}^{\half}(p_1,p_2)
= 
\underbrace{\frac{p_2(1-p_2)}{2nf(F^{-1}(p_2))}}_{\textrm{vanishes when $p_2=1$}}
-\underbrace{\frac{p_1(1-p_1)}{2nf(F^{-1}(p_1))}}_{\textrm{vanishes when $p_1=0$}} 
+\, o\left(\frac 1n\right).
\end{equation}
This asymptotic formula helps us to appreciate the fact that in Example~\ref{example-middle2} we were not able to say anything definitive about the sign of the bias $\mathrm{Bias}_{n,\srs}^{\half}(p_1,p_2)$, and it also helps us to see why we could earlier show that  $\mathrm{Bias}_{n,\srs}^{\one}(p) \le 0$ (statement~\eqref{bias-upper-srs}) and $\mathrm{Bias}_{n,\srs}^{\zero}(p) \ge 0$ (statement~\eqref{bias-lower-srs}) for any $p\in (0,1)$. Furthermore, asymptotic formula~\eqref{bias-as-form} helps us to understand why the Vervaat process needs to be normalized by $n$ to stabilize its asymptotic behaviour and hence to obtain a non-degenerate limit, which is a half of the squared Brownian bridge, that is, $B^2(p)/2$, $0\le p \le 1$. Note in this regard that the expected value of $B^2(p)/2$ is equal to $p(1-p)/2$, 
thus serving a further clarification of the form of the two leading terms on the right-hand side of equation~\eqref{bias-as-form}.
\end{remark}

\begin{corollary}[asymptotic distribution]\label{corollary-3middle} 
Let $F$ be any cdf, and let $F_1,F_2,\ldots $ be any sequence of cdf's satisfying condition~\eqref{cond-1upper} with $p\in \{p_1,p_2\}$ and condition~\eqref{cond-2upper} with some normalizing constants $A_n\to \infty $ when $n\to \infty $.  Then 
\begin{equation}\label{eq-1db}
A_n\left( \int_{p_1}^{p_2} F_n^{-1}(u) \dd u - \int_{p_1}^{p_2} F^{-1}(u) \dd u \right) 
=A_n\int_{F^{-1}(p_1)}^{F^{-1}(p_2)}  \big( F(x)- F_n(x)\big) \dd x +o_{\mathbb{P}}(1)
\end{equation} 
and, therefore, the convergence-in-distribution statement 
\begin{equation}\label{eq-1db-a}
A_n\left( \int_{p_1}^{p_2} F_n^{-1}(u) \dd u - \int_{p_1}^{p_2} F^{-1}(u) \dd u \right) 
\stackrel{d}{\to} \mathcal{L}_{\half}(p_1,p_2)
\end{equation} 
holds if and only if 
\begin{equation}\label{eq-1db-b}
A_n\left(\int_{F^{-1}(p_1)}^{F^{-1}(p_2)}  F_n(x)\dd x - \int_{F^{-1}(p_1)}^{F^{-1}(p_2)}  F(x)\dd x \right) 
\stackrel{d}{\to} - \mathcal{L}_{\half}(p_1,p_2) , 
\end{equation} 
where $\mathcal{L}_{\half}(p_1,p_2)$ is a random variable determined by statement~\eqref{eq-1db-b}.   
\end{corollary}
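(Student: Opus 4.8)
The plan is to mimic the proofs of Corollaries~\ref{corollary-3upper} and~\ref{corollary-3lower} almost verbatim, the only structural difference being that Theorem~\ref{theorem-0middle} now produces \emph{two} remainder terms rather than one. Applying Theorem~\ref{theorem-0middle} with $F_n$ in place of $G$ and multiplying through by $A_n$ rewrites the left-hand side of~\eqref{eq-1db} exactly as $A_n\int_{F^{-1}(p_1)}^{F^{-1}(p_2)}\big(F(x)-F_n(x)\big)\dd x$ plus the error $A_n\big(\Rem(p_2;F,F_n)-\Rem(p_1;F,F_n)\big)$. Hence everything reduces to showing that this error is $o_{\mathbb{P}}(1)$.

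To do that I would treat each remainder separately. For $j=1,2$, non-negativity of $\Rem(p_j;F,F_n)$ together with bound~\eqref{r-prop-1upper} gives
\[
0\le A_n\,\Rem(p_j;F,F_n)\le \Big(A_n\sup_{x\in\mathbb{R}}\big|F_n(x)-F(x)\big|\Big)\cdot\big|F_n^{-1}(p_j)-F^{-1}(p_j)\big| .
\]
By condition~\eqref{cond-2upper} the first factor is $O_{\mathbb{P}}(1)$, and by condition~\eqref{cond-1upper} applied at $p=p_j$ (which is among the hypotheses, since it is assumed for $p\in\{p_1,p_2\}$) the second factor is $o_{\mathbb{P}}(1)$; their product is therefore $o_{\mathbb{P}}(1)$. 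Forming the signed difference over $j=1,2$ yields $A_n\big(\Rem(p_2;F,F_n)-\Rem(p_1;F,F_n)\big)=o_{\mathbb{P}}(1)$, which is precisely equation~\eqref{eq-1db}. From~\eqref{eq-1db}, the two normalized sequences appearing in~\eqref{eq-1db-a} and~\eqref{eq-1db-b} differ by a term vanishing in probability, so a Slutsky-type argument transfers any weak limit from one to the other, with the sign already absorbed into the way $-\mathcal{L}_{\half}(p_1,p_2)$ appears in~\eqref{eq-1db-b}; taking $\mathcal{L}_{\half}(p_1,p_2)$ to be the random variable determined by~\eqref{eq-1db-b} closes the argument.

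I do not expect any genuine obstacle here. The middle-layer case is the most benign of the three: since $0<p_1<p_2<1$, both quantiles $F^{-1}(p_1)$ and $F^{-1}(p_2)$ are finite, with no need for the $\pm\infty$ replacements flagged in Note~\ref{note-4} and Example~\ref{ex-counter}, and the argument never couples the behaviour at $p_1$ with that at $p_2$ — the two remainder terms are controlled independently and only combined at the very end. The sole point requiring a moment's care is to insist that condition~\eqref{cond-1upper} hold at $p_1$ \emph{and} at $p_2$ simultaneously; this is exactly what the hypothesis ``with $p\in\{p_1,p_2\}$'' supplies, and, via Lemma~\ref{lemma-1}, it follows automatically from continuity of $F^{-1}$ at $p_1$ and $p_2$ once condition~\eqref{cond-2upper} is in force.
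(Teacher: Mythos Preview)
Your proposal is correct and follows essentially the same approach as the paper's own proof: apply Theorem~\ref{theorem-0middle} with $F_n$ in place of $G$, then kill each $A_n\,\Rem(p_j;F,F_n)$ via bound~\eqref{r-prop-1upper} combined with conditions~\eqref{cond-1upper} and~\eqref{cond-2upper}, exactly as in the proof of Corollary~\ref{corollary-3upper}. The paper merely points back to that earlier proof rather than rewriting the bound, but the argument is identical.
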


\begin{proof} 
Using  Theorem~\ref{theorem-0middle} with $F_n$ instead of $G$, we only need to show that $A_n\Rem(p;F,F_n)$ converges in probability to $0$, which we showed in the proof of Corollary~\ref{corollary-3upper}.  This establishes equation~\eqref{eq-1db}, which implies the equivalence of statements~\eqref{eq-1db-a} and~\eqref{eq-1db-b}, and concludes the proof of Corollary~\ref{corollary-3middle}. 
\end{proof}

\begin{example}[SRS] \label{example-middle3} 
When $F_n$ is $F_{n,\srs}$, condition~\eqref{cond-2upper} is a consequence of the Kolmogorov-Smirnov theorem. Since condition~\eqref{cond-2upper} is satisfied, condition~\eqref{cond-1upper} is satisfied as well, provided that, according to Lemma~\ref{lemma-1}, the quantile function $F^{-1}$ is continuous at the points $p_1$ and $p_2$.  
Furthermore, since the integral $\int_{F^{-1}(p_1)}^{F^{-1}(p_2)} F_n(x) \dd x$ is the arithmetic mean of $n$ independent copies of random variable~\eqref{rv-middle}, 
we conclude that the integral $\int_{F^{-1}(p_1)}^{F^{-1}(p_2)} F_n(x) \dd x$ satisfies the central limit theorem. In summary, therefore, when the quantile function $F^{-1}$ is continuous at the points $p_1$ and $p_2$, we have 
\begin{equation}\label{normality-middle}
\sqrt{n}\left(\int_{p_1}^{p_2} F_{n,\srs}^{-1}(u)\dd u  - \int_{p_1}^{p_2} F^{-1}(u)\dd u \right) 
\stackrel{d}{\to} \mathcal{N}(0,\sigma_{\half}^2(p_1,p_2)) , 
\end{equation}
where $\sigma_{\half}^2(p_1,p_2)$ is the variance of the random variable $Y_{\half}(p_1,p_2)$. The variance can be expressed by the formula (see Note~\ref{note-5})
\begin{equation}
\sigma_{\half}^2(p_1,p_2)
=\int_{F^{-1}(p_1)}^{F^{-1}(p_2)}\int_{F^{-1}(p_1)}^{F^{-1}(p_2)}
 \big( F(x \wedge y) -F(x)F(y) \big) \dd x  \dd y. 
\label{normality-middle-var2}
\end{equation}
\end{example}

\begin{computation}[SRS] \label{computation-middle} 
To obtain a convenient computational formula for the empirical middle-layer integral $\int_{p_1}^{p_2} F_{n,\srs}^{-1}(u)\dd u$, we can start with the equation 
\begin{equation}
\int_{p_1}^{p_2} F_{n,\srs}^{-1}(u)\dd u
= \int_{0}^{p_2} F_{n,\srs}^{-1}(u)\dd u - \int_{0}^{p_1} F_{n,\srs}^{-1}(u)\dd u   
\label{quantile-middle-3}
\end{equation}
and then apply, for example, computational formula~\eqref{quantile-lower-3ii} for the two empirical lower-layer integrals on the right-hand side of equation~\eqref{quantile-middle-3}. (All empirical integrals are finite, and so manipulations like those in equation~\eqref{quantile-middle-3} do not pose any technical issues.) In particular, we obtain the following equations 
\begin{align*}
\int_{p_1}^{p_2} F_{n,\srs}^{-1}(u)\dd u
&= {1\over n}\sum_{i=\lceil np_1 \rceil +1}^{\lceil np_2 \rceil } X_{i :n} 
-\left( {\lceil np_2 \rceil \over n}- p_2\right) X_{\lceil np_2\rceil :n}  
+\left( {\lceil np_1 \rceil \over n}- p_1\right) X_{\lceil np_1\rceil :n}    
\\
&= {\lceil np_2 \rceil-\lceil np_1 \rceil\over n} 
\underbrace{{1\over \lceil np_2 \rceil-\lceil np_1 \rceil}
\sum_{i=\lceil np_1 \rceil +1}^{\lceil np_2 \rceil } X_{i :n}}_{\textrm{trimmed mean}} 
\\
&\qquad \underbrace{-\left( {\lceil np_2 \rceil \over n}- p_2\right) X_{\lceil np_2\rceil :n}  
+\left( {\lceil np_1 \rceil \over n}- p_1\right) X_{\lceil np_1\rceil :n}}_{\textrm{asymptotically negligible terms}}   
\end{align*}
with the right-hand side connecting the empirical middle-layer integral with the trimmed (on both sides) mean that was considered by \citet{s1973} and played a pivotal role in the development of robust statistics, including the introduction of the method of trimmed moments (MTM) by \citet{bjz2009} and the method of Winsorized moments (MWM) by \citet{zbg2018}. 
\end{computation}

\section{Illustrations: iid random variables}
\label{examples}

To make the following illustrations less voluminous and easier connectable to what is known in the literature and hence maximally illuminating, we use SRS throughout this section. That is, we work with iid random variables  $X_1, \dots , X_n $ and from them arising empirical cdf $ F_{n,\srs}$, which is a consistent and unbiased estimator of the underlying population cdf $F$.

\subsection{Upside tail-value-at-risk}
\label{illustration-1} 

Given any $p\in (0,1)$ and $F\in \mathcal{F}^{+}_{1}$, the upside tail-value-at-risk $\ATVaR(p)$ is given by equation~\eqref{ill-ATVaR-0}. 
As an illustration, in Figure~\ref{fig:ATVaR} we depicted $p\mapsto \ATVaR(p)$ for the Pareto (Type I) distribution, whose quantile function is $p\mapsto x_0(1-p)^{-1/\alpha } $ 
with the scale $x_0>0$ and shape $\alpha>0$ parameters, which we set to $1$ and $3$, respectively. The empirical counterpart $\ATVaR_{n,\srs}(p)$ is defined by equation~\eqref{ill-ATVaR-0} with  $F_{n,\srs}$ instead of $F$. (For a comprehensive review of various estimators of $\ATVaR$ and its sister risk measure called Expected Shortfall, we refer to \citet{nzc20213}.) We have the following three statements concerning consistency, bias, and asymptotic normality of  $\ATVaR_{n,\srs}(p)$.

\subsubsection{Consistency}

When $F\in \mathcal{F}^{+}_{1}$, we obtain from Example~\ref{example-upper1} that  
$\ATVaR_{n,\srs}(p) $ is a consistent estimator of $ \ATVaR(p) $, that is,  
\begin{equation}\label{ill-ATVaR-1}
\ATVaR_{n,\srs}(p) \stackrel{\mathbb{P}}{\to} \ATVaR(p) . 
\end{equation}
This result is a special case of \citet[]{gsz2022a} who consider consistency and with it associated fixed-margin confidence intervals for the tail conditional allocation, which generalizes $\ATVaR(p)$. For related bootstrap-type considerations and fixed-margin confidence intervals, we refer to \citet[]{gsz2024}. 

\subsubsection{Bias}

When $F\in \mathcal{F}^{+}_{1} \cap \mathcal{T}_{\varepsilon}$ for some $\varepsilon>0$ (recall Note~\ref{note+epsilon}), we obtain from Example~\ref{example-upper2} that $\ATVaR_{n,\srs}(p)$ is a non-positively biased estimator of $\ATVaR(p)$, that is, 
\begin{equation}\label{ill-ATVaR-2}
\mathbb{E}\big(\ATVaR_{n,\srs}(p)\big) \le \ATVaR(p)  
\end{equation} 
for every $n\in \mathbb{N}$. This property, although not stated explicitly, was earlier established by \citet[][p.~3602]{bjpz2008}. \citet{G2025} offers an illuminating  discussion of the property from the heuristic point of view.

\subsubsection{Asymptotic normality}

When $F\in \mathcal{F}^{+}_{2}$ and  the quantile function $F^{-1}$ is continuous at the point $p$, we obtain from  Example~\ref{example-upper3} that 
\begin{equation}\label{ill-ATVaR-3}
\sqrt{n}\left(\ATVaR_{n,\srs}(p) - \ATVaR(p)  \right) 
\stackrel{d}{\to} \mathcal{N}(0,\sigma_{\ATVaR}^2(p)), 
\end{equation} 
where the asymptotic variance is  
\begin{equation}\label{ill-ATVaR-4}
\sigma_{\ATVaR}^2(p)={1\over (1-p)^2}\int^{\infty }_{F^{-1}(p)} \int^{\infty }_{F^{-1}(p)} \big( F(x \wedge y) -F(x)F(y) \big) \dd x  \dd y. 
\end{equation}
This statement is a special case of a more general result derived by \citet[]{bjpz2008}, who consider not just pointwise convergence to the limiting distribution but also uniform convergence over all $p\in (0,1)$, which allows one to establish confidence bands for the function $p\mapsto \ATVaR(p)$. Statement~\eqref{ill-ATVaR-3} is also a special case of \citet[]{gsz2022b} who consider the limiting distribution of the tail conditional allocation, which generalizes $\ATVaR(p)$.

\subsection{Downside tail-value-at-risk}
\label{illustration-2}

Given $p\in (0,1)$ and $F\in \mathcal{F}^{-}_{1}$, the downside tail-value-at-risk $\FTVaR(p)$ is given by equation~\eqref{ill-FTVaR-0}. 
As an illustration, in Figure~\ref{fig:FTVaR} we depicted $p\mapsto \FTVaR(p)$ for the Pareto (Type I) distribution with the scale $x_0=1$ and shape $\alpha=3$ parameters.  The empirical counterpart $\FTVaR_{n,\srs}(p)$ is defined by equation~\eqref{ill-FTVaR-0} with  $F_{n,\srs}$ instead of $F$.  We have the following three statements concerning consistency, bias, and asymptotic normality of  $\FTVaR_{n,\srs}(p)$.

\subsubsection{Consistency}
 
When $F\in \mathcal{F}^{-}_{1}$, we obtain from Example~\ref{example-lower1} that  
$\FTVaR_{n,\srs}(p) $ is a consistent estimator of $ \FTVaR(p) $, that is,  
\begin{equation}\label{ill-FTVaR-1}
\FTVaR_{n,\srs}(p) \stackrel{\mathbb{P}}{\to} \FTVaR(p) . 
\end{equation}

\subsubsection{Bias}
 
When $F\in \mathcal{F}^{-}_{1}\cap \mathcal{T}_{\varepsilon}$ for some $\varepsilon>0$ (recall Note~\ref{note-epsilon}), we obtain from Example~\ref{example-lower2} that $\FTVaR_{n,\srs}(p)$ is a non-negatively biased estimator of $\FTVaR(p)$, that is, 
\begin{equation}\label{ill-FTVaR-2}
\mathbb{E}\big(\FTVaR_{n,\srs}(p)\big) \ge \FTVaR(p)  
\end{equation} 
for every $n\in \mathbb{N}$.

\subsubsection{Asymptotic normality}
 
When $F\in \mathcal{F}^{-}_{2}$ and the quantile function $F^{-1}$ is continuous at the point $p$, we obtain from  Example~\ref{example-lower3} that 
\begin{equation}\label{ill-FTVaR-3}
\sqrt{n}\left(\FTVaR_{n,\srs}(p) - \FTVaR(p)  \right) 
\stackrel{d}{\to} \mathcal{N}(0,\sigma_{\FTVaR}^2(p)), 
\end{equation} 
where the asymptotic variance is  
\begin{equation}\label{ill-FTVaR-4}
\sigma_{\FTVaR}^2(p)={1\over p^2}\int_{-\infty }^{F^{-1}(p)} \int_{-\infty }^{F^{-1}(p)} 
 \big( F(x \wedge y) -F(x)F(y) \big) \dd x  \dd y. 
\end{equation}

\subsection{Lorenz curve}
\label{illustration-3} 

Let $F\in \mathcal{F}^{\mu\neq 0}_{1}$, where $ \mathcal{F}^{\mu\neq 0}_{1}$ denotes the set of all cdf's that have finite but non-zero first moments, that is, 
$\mu=\mathbb{E}(X) \in \mathbb{R} \setminus \{0\}$ for $X\sim F$. Following \citet{g1971}, the Lorenz curve $\LC(p)$ is defined by equation~\eqref{ill-Lorenz-0} with $p$ running through the unit interval $[0,1]$. 
(The Lorenz curve is usually defined and meaningfully interpreted only on the set of non-negative random variables $X\ge 0$.)  
Since the cases $p=0$ and $p=1$ are trivial, we restrict ourselves to $p\in (0,1)$. 
As an illustration, in Figure~\ref{fig:LC} we depicted $p\mapsto \LC(p)$ in the case of the Pareto (Type I) distribution with the scale $x_0=1$ and shape $\alpha=3$ parameters. 
When $F$ is replaced by $F_{n,\srs}$ on the right-hand side of equation~\eqref{ill-Lorenz-0}, we obtain the empirical Lorenz curve $\LC_{n,\srs}(p)$.  We have the following three statements concerning consistency, bias, and asymptotic normality of  $\LC_{n,\srs}(p)$ for any fixed $p\in (0,1)$. 

\subsubsection{Consistency}
 
When $F\in \mathcal{F}^{\mu\neq 0}_{1}$, we obtain from Example~\ref{example-lower1} and consistency of the arithmetic mean $\mu_{n,\srs}$ 
that  
$\LC_{n,\srs}(p) $ is a consistent estimator of $ \LC(p) $, that is, 
\begin{equation}\label{ill-LC-1}
\LC_{n,\srs}(p) \stackrel{\mathbb{P}}{\to} \LC(p) . 
\end{equation}

\subsubsection{Bias}
 
The result of Example~\ref{example-lower2} suggests that $\LC_{n,\srs}(p)$ might be a positively biased estimator of $\LC(p)$, given that the arithmetic mean $\mu_{n,\srs}$ is an unbiased estimator of $\mu $.  
This may or may not be true as demonstrated by \citet{av20215}, who summarize their findings on the subject as follows: 
\begin{quote}
For several parent
distributions it is possible to prove that the sample Lorenz curve is a positively biased
estimate of the population Lorenz curve. In this paper, several sufficient conditions
for such positive bias are investigated. An example shows that negative bias is not
impossible, though apparently not common. \citep[][p.~3]{av20215}
\end{quote}

\subsubsection{Asymptotic normality}

When $F\in \mathcal{F}^{\mu\neq 0}_{2}$ and the quantile function $F^{-1}$ is continuous at the point $p$, we obtain from results of Section~\ref{sect-lower} and equation~\eqref{eq-0} that (see Lemma~\ref{LC-clt} in Appendix~\ref{subsub-asresults} for a proof) 
\begin{equation}\label{ill-LC-3}
\sqrt{n}\left(\LC_{n,\srs}(p) - \LC(p)  \right) 
\stackrel{d}{\to} \mathcal{N}(0,\sigma_{\lc}^2(p)), 
\end{equation} 
where the asymptotic variance $\sigma_{\lc}^2(p)$ is the second moment of the mean-zero random variable 
\begin{equation}\label{ill-LC-4}
Y_{\lc}(p)
= {1\over \mu } \int_{-\infty }^{F^{-1}(p)} \big( \mathds{1}\{X\le x\} -F(x)\big) \dd x  
+{\LC(p) \over \mu } \big( X-\mu \big) . 
\end{equation}
For confidence bands (i.e., simultaneous confidence intervals over all $p\in (0,1)$) for the Lorenz curve under minimal conditions on the population distribution, we refer to \citet{cfs1998}. 
It is also useful to note that the variance $\sigma_{\lc}^2(p)$ can be expressed in a form resembling those for $\sigma_{\ATVaR}^2(p)$ and $\sigma_{\FTVaR}^2(p)$ in equations~\eqref{ill-ATVaR-4} and~\eqref{ill-FTVaR-4}, respectively, but apart from being nice theoretical exercises, we do not see much practical value in doing so: first, this would not lead to a convenient empirical estimator for  $\sigma_{\lc}^2(p)$, and second, the variance  $\sigma_{\lc}^2(p)$ can be easier estimated via a resampling technique \citep[e.g.,][]{st1995}.

\subsection{Gini curve}
\label{illustration-4}

Let $F\in \mathcal{F}^{\mu\neq 0}_{1}$, where the class $\mathcal{F}^{\mu\neq 0}_{1}$ is defined in Example~\ref{illustration-3}. The Gini curve $\GC(p)$ is given by equation~\eqref{ill-Gini-0}, 
where $p$ runs through the unit interval $[0,1]$. (The Gini curve is usually defined and meaningfully interpreted only on the set of non-negative random variables $X\ge 0$.) 
Since the cases $p=0$ and $p=1$ are trivial, we restrict ourselves to $p\in (0,1)$. 
As an illustration, in Figure~\ref{fig:GC} we depicted $p\mapsto \GC(p)$ in the case of the Pareto (Type I) distribution with the scale $x_0=1$ and shape $\alpha=3$ parameters. 
When $F$ is replaced by $F_{n,\srs}$  on the right-hand side of equation~\eqref{ill-Gini-0}, we obtain the empirical Gini curve $\GC_{n,\srs}(p)$.  We have the following three statements concerning consistency, bias, and asymptotic normality of  $\GC_{n,\srs}(p)$ for any $p\in (0,1)$. 

\subsubsection{Consistency}

When $F\in \mathcal{F}^{\mu\neq 0}_{1}$, we obtain from consistency of the arithmetic mean $\mu_{n,\srs}$ and Examples~\ref{example-upper1} and~\ref{example-lower1} that  
$\GC_{n,\srs}(p) $ is a consistent estimator of $ \GC(p) $, that is, 
\begin{equation}\label{ill-GC-1}
\GC_{n,\srs}(p) \stackrel{\mathbb{P}}{\to} \GC(p) . 
\end{equation}

\subsubsection{Bias}
 
Even more so than in the case of the Lorenz curve, we cannot say anything definitive about the bias of $\GC_{n,\srs}(p)$. 

\subsubsection{Asymptotic normality}
  
When $F\in \mathcal{F}^{\mu\neq 0}_{2}$ and the quantile function $F^{-1}$ is continuous at the points $p$ and $1-p$, we obtain from results of Section~\ref{sect-lower} and equation~\eqref{eq-0} that (see Lemma~\ref{GC-clt} in Appendix~\ref{subsub-asresults} for a proof)  
\begin{equation}\label{ill-GC-3}
\sqrt{n}\left(\GC_{n,\srs}(p) - \GC(p)  \right) 
\stackrel{d}{\to} \mathcal{N}(0,\sigma_{\gc}^2(p)), 
\end{equation} 
where the asymptotic variance $\sigma_{\gc}^2(p)$ is the second moment of the mean-zero random variable 
\begin{multline}\label{ill-GC-4a}
Y_{\gc}(p)
= {1\over \mu } \int_{-\infty }^{F^{-1}(1-p)} \big( \mathds{1}\{X\le x\} -F(x)\big) \dd x  
\\ 
+ {1\over \mu } \int_{-\infty }^{F^{-1}(p)} \big( \mathds{1}\{X\le x\} -F(x)\big) \dd x  
+{1-\GC(p) \over \mu } \big( X-\mu \big) . 
\end{multline}
Estimating the asymptotic variance using a resampling technique might be the most efficient and speediest way toward, e.g., constructing large-sample confidence intervals for $\GC(p)$.

\section{Illustrations: stationary time series}
\label{time-series-data}

Often in applications, data arrive in the form of time series. Depending on the class of time series, and there are many of them, we may see different normalizing constants, different asymptotic distributions, and even in the case of normal asymptotic distributions, we may see different asymptotic variances. Therefore, to illustrate how our general results work on time series, we need to make a choices, and our's is to work with those time series that are $S$- and $M$-mixing. These two notions of mixing have been introduced by \citet{bhs2009} and \citet{bhs2011}, respectively, and they cover many time series (linear and non-linear). Very importantly, it has also turned out that verifying $S$- and $M$-mixing conditions is often easier than verifying classical mixing conditions.  \citet{bhs2009,bhs2011} provide illuminating discussions of these matters with accompanying examples and references.

\subsection{$S$-mixing and statements~\eqref{eq-1dupper}, \eqref{eq-1da} and \eqref{eq-1db}}
\label{section-smix}

We follow \citet{bhs2009} and say that a time series $(X_t)_{t\in \mathbb{Z}}$ is  $S$-mixing if it satisfies the following two conditions: 
\begin{enumerate}[label={(\Alph*)}]
\item \label{cond-a}
For any $t\in \mathbb{Z}$ and $m \in \mathbb{N}$, there is a random variable $\Upsilon_{t}^{(m)}$ such that the property 
\begin{equation}\label{cond-a1}
\mathbb{P}\big( |X_{t}-\Upsilon_{t}^{(m)}|\ge \gamma_m \big) \le \delta_m
\end{equation}
holds for some deterministic sequences $\gamma_m \to 0$ and $\delta_m \to 0$. 
\item \label{cond-b}
For any disjoint intervals $I_1,\dots , I_r \subset  \mathbb{Z}$ of integers and for any positive integers $m_1,\dots , m_r \in \mathbb{N}$, the vectors $(\Upsilon_{t}^{(m_1)},t\in I_1), \dots, (\Upsilon_{t}^{(m_r)},t\in I_r)$ are independent, provided that the separation between the pairs $I_i$ and $I_j$ is greater than $m_i+m_j$ for all $1\le i < j \le r$, that is, 
\begin{align}
\dist(I_i,I_j)
:=&\inf\big\{|a-b|,a\in I_i,b\in I_j\big\}  
\notag 
\\
> & m_i+m_j . 
\label{cond-b1}
\end{align}  
\end{enumerate}

We can clearly see why $S$-mixing has turned out to be such an attractive notion, in particular from the applications point of view: it  is based on various portions of the time series $(X_t)_{t\in \mathbb{Z}}$ and not on mathematical constructs such as $\sigma$-algebras, as is the case with many classical mixing notions. The only challenge with $S$-mixing is that one needs to construct random variables $\Upsilon_{t}^{(m)}$, but \citet{bhs2009} suggest several recipes for accomplishing this task. To gain intuition on the matter, we next follow one of the recipes and show that the causal autoregressive of order $1$ time series is $S$-mixing.

\begin{example}[AR(1) is $S$-mixing] \label{ex-ar-1}
Let $(X_t)_{t\in \mathbb{Z}}$ be a causal AR(1) time series, that is, 
$X_{t}=\varphi X_{t-1}+\varepsilon_t$ 
for all $t\in \mathbb{Z}$, where $|\varphi |<1$ is a constant and   $(\varepsilon_t)_{t\in \mathbb{Z}}$ is a mean-zero white noise with finite marginal variances $\sigma_{\varepsilon}^2=\Var(\varepsilon_t)<\infty $. In addition, we assume that $\varepsilon_t$'s are iid random variables, thus making $(X_t)_{t\in \mathbb{Z}}$ strictly stationary. The representation 
\begin{equation}\label{ar-1}
X_t=\sum_{i=0}^{\infty} \varphi^i \varepsilon_{t-i}
\end{equation}
holds for all $t\in \mathbb{Z}$. Denote 
\begin{equation}\label{ar-1a}
\Upsilon_t^{(m)}=\sum_{i=0}^{m} \varphi^i \varepsilon_{t-i}. 
\end{equation}
Condition~\ref{cond-b} of $S$-mixing is satisfied because the two random vectors
\begin{gather*}
(\varepsilon_{s-m_i}, \dots, \varepsilon_{s}), \quad s \in I_i, 
\\
(\varepsilon_{t-m_j}, \dots, \varepsilon_{t}), \quad t \in I_j, 
\end{gather*}
do not overlap and are therefore independent whenever $\dist(I_i,I_j) >\max\{m_i,m_j \}$, and so definitely when $\dist(I_i,I_j) >m_i+m_j $. To check condition~\ref{cond-a}, we write 
\begin{align*}
\mathbb{P}\big( |X_{t}-\Upsilon_{t}^{(m)}|\ge \gamma_m \big) 
&\le {1\over \gamma_m^2} \mathbb{E}\bigg( \bigg( \sum_{i=m+1}^{\infty} \varphi^i \varepsilon_{t-i}\bigg)^2 \bigg) 
\\
&=  {\varphi^{2(m+1)}\over \gamma_m^2} {\sigma_{\varepsilon}^2 \over 1-\varphi^2} 
\\
&=: \delta_m , 
\end{align*}
where the bound is due to Markov's inequality. 
For example, if set $\gamma_m=m^{-a}$ for any constant $a>0$, then we have $\delta_m=O(m^{-A})$ when $m\to \infty $ for any constant $A>0$. In fact, for $\gamma_m$ given above, the decay of $\delta_m$  is exponential. 
\end{example}

The following theorem is a very special case of \citet[][Theorem~2, p.~1303]{bhs2009}, but it is exactly what we currently need.

\begin{theorem}\label{bhs-theorem}
Let $(X_t)_{t\in \mathbb{Z}}$ be a strictly stationary and $S$-mixing time series whose marginal cdf $F$ is Lipschitz continuous of order $\theta>0$, that is, there is a constant $c>0$ such that 
\begin{equation}\label{bhs-1}
\big| F(x)-F(y) \big| \le c |x-y|^{\theta }
\end{equation}
for all $x,y\in \mathbb{R}$. Furthermore, assume that condition~\ref{cond-a} is satisfied with 
\begin{equation}\label{bhs-2}
\gamma_m ={1\over m^{A/\theta }} \quad \textrm{and} \quad \delta_m =O\bigg({1\over m^{A}}\bigg)
\end{equation}
for some $A >4$. Then 
\begin{equation}
\sqrt{n}\,\sup_{x\in \mathbb{R}} \big| F_{n,\ts}(x) - F(x)\big| =O_{\mathbb{P}}(1) 
\label{cond-2upper-smix}
\end{equation}
when $n\to \infty $, where $F_{n,\ts}$ is the empirical cdf based on the observable portion  $X_1,\dots, X_n$ of the time series $(X_t)_{t\in \mathbb{Z}}$.  
\end{theorem}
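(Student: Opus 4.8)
The plan is to deduce the claim directly from the functional central limit theorem for $S$-mixing sequences established in \citet[Theorem~2]{bhs2009}, after checking that our hypotheses form a (comfortable) special case of its assumptions. That theorem asserts that, for a strictly stationary $S$-mixing sequence with a sufficiently smooth marginal cdf $F$ and with the $S$-mixing approximation errors $\gamma_m,\delta_m$ decaying at a polynomial rate governed by an exponent above a universal threshold, the empirical process
\begin{equation*}
\beta_n(x):=\sqrt{n}\,\big(F_{n,\ts}(x)-F(x)\big),\qquad x\in\mathbb{R},
\end{equation*}
converges weakly, as a random element of the space of bounded functions on $\mathbb{R}$ equipped with the uniform metric, to a tight, mean-zero Gaussian process $\mathbb{G}$ whose covariance reflects both the marginal law of $X_0$ and the serial dependence of $(X_t)_{t\in\mathbb{Z}}$. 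Our task is therefore purely one of verification.

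First I would record that strict stationarity and $S$-mixing -- conditions~\ref{cond-a} and~\ref{cond-b} -- are assumed outright in the statement, so nothing is needed there beyond invoking them. Second, I would check the smoothness requirement: \citet{bhs2009} ask $F$ to be H\"older continuous of some positive order, which is exactly our hypothesis~\eqref{bhs-1}. Third -- and this is the one place where the interplay of the parameters must be watched -- I would confirm that the polynomial rates~\eqref{bhs-2} feed correctly into their hypothesis. The point is that the $S$-mixing approximation controls $|X_t-\Upsilon_t^{(m)}|$ in the sense of~\eqref{cond-a1}, whereas the empirical-process argument needs control of $|F(X_t)-F(\Upsilon_t^{(m)})|$; Lipschitz continuity of order $\theta$ converts a spatial error of size $\gamma_m=m^{-A/\theta}$ into a distributional error of size $c\gamma_m^{\theta}=c\,m^{-A}$, matching the $m^{-A}$ decay of $\delta_m$. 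With $A>4$, both decays are fast enough to clear the threshold in \citet[Theorem~2]{bhs2009}, so the functional CLT applies.

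Finally, I would pass from the functional CLT to the asserted stochastic boundedness. Weak convergence of $\beta_n$ to the tight limit $\mathbb{G}$ in the uniform metric means, in particular, that $(\beta_n)_{n\ge1}$ is asymptotically tight; applying the continuous mapping theorem to the supremum-norm functional $g\mapsto\sup_{x\in\mathbb{R}}|g(x)|$, which is continuous on bounded functions under the uniform metric, gives
\begin{equation*}
\sqrt{n}\,\sup_{x\in\mathbb{R}}\big|F_{n,\ts}(x)-F(x)\big|\stackrel{d}{\to}\sup_{x\in\mathbb{R}}|\mathbb{G}(x)|,
\end{equation*}
and the right-hand side is an almost surely finite random variable because $\mathbb{G}$ is a tight, hence almost surely bounded, process. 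Convergence in distribution to a proper random variable implies $O_{\mathbb{P}}(1)$, which is precisely~\eqref{cond-2upper-smix}. The main obstacle -- really the only nontrivial point -- is the bookkeeping in the third step: one must thread the H\"older exponent $\theta$ and the exponent $A$ through the hypotheses of the cited theorem exactly as in~\eqref{bhs-2}, so that the rate $m^{-A/\theta}$ for $\gamma_m$ is the correct one to produce the required $m^{-A}$-rate at the level of the transformed variables $F(X_t)$; everything else is a direct citation.
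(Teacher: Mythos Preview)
Your proposal is correct and matches the paper's approach: the paper does not give a proof but simply states that the theorem is ``a very special case of \citet[][Theorem~2, p.~1303]{bhs2009},'' which is exactly the citation you invoke. Your additional verification of how the H\"older exponent $\theta$ interacts with the rates in~\eqref{bhs-2}, and the passage from weak convergence to $O_{\mathbb{P}}(1)$ via the continuous mapping theorem, are the natural details one would supply to unpack that citation.
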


\begin{note} \label{bhs-note-1}
If the marginal cdf $F$ of $(X_t)_{t\in \mathbb{Z}}$ has a bounded pdf, then condition~\eqref{bhs-1} is satisfied with $\theta=1$, and so 
$\gamma_m =m^{-A} $ in this case. 
\end{note}

We can now formulate the following corollaries to Theorem~\ref{bhs-theorem}. 

\begin{corollary}\label{cor-smix-1}
Let all the conditions of Theorem~\ref{bhs-theorem} be satisfied. 
If $F\in \mathcal{F}^{+}_{1}$ and the quantile function $F^{-1}$ is continuous at the point $p$, then statement~\eqref{eq-1dupper} holds, that is, 
\begin{multline}\label{eq-1dupper-smix}
\sqrt{n}\,\left( \int_p^1 F_{n,\ts}^{-1}(u) \dd u -\int_p^1 F^{-1}(u) \dd u \right) 
\\
=\sqrt{n}\,\bigg(  {1\over n} \sum_{i=1}^n  h_{\one}(X_i)
-\mathbb{E}\left( h_{\one}(X) \right) \bigg)  +o_{\mathbb{P}}(1) , 
\end{multline}
where 
\[
h_{\one}(x)  =\big( x-F^{-1}(p) \big)^{+} . 
\]
\end{corollary}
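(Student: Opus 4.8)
The plan is to obtain Corollary~\ref{cor-smix-1} as an essentially immediate consequence of Corollary~\ref{corollary-3upper}, applied with $F_n=F_{n,\ts}$ and normalizing constants $A_n=\sqrt{n}$, followed by an identification of the resulting linear term via the linearity of functional~\eqref{linear-upper}, exactly as in Example~\ref{example-upper1}. The only real content is bookkeeping: matching each hypothesis of Corollary~\ref{corollary-3upper} to the result that supplies it, and tracking signs.

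First I would check the hypotheses of Corollary~\ref{corollary-3upper}. Condition~\eqref{cond-2upper} with $A_n=\sqrt{n}$ is precisely statement~\eqref{cond-2upper-smix}, which Theorem~\ref{bhs-theorem} supplies under the standing assumptions. Since $\sqrt{n}\to\infty$, this in particular forces $\sup_{x\in\mathbb{R}}|F_{n,\ts}(x)-F(x)|=o_{\mathbb{P}}(1)$, i.e.\ condition~\eqref{cond-0upper}; and because $F^{-1}$ is assumed continuous at $p$, Lemma~\ref{lemma-1} then yields condition~\eqref{cond-1upper}, namely $F_{n,\ts}^{-1}(p)\stackrel{\mathbb{P}}{\to}F^{-1}(p)$ (so that condition could in fact be dropped from the statement of Corollary~\ref{corollary-3upper} altogether). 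Together with $F\in\mathcal{F}^{+}_{1}$, all requirements of Corollary~\ref{corollary-3upper} are met, so statement~\eqref{eq-1dupper} holds, that is,
\[
\sqrt{n}\left(\int_p^1 F_{n,\ts}^{-1}(u)\dd u-\int_p^1 F^{-1}(u)\dd u\right)
=\sqrt{n}\int_{F^{-1}(p)}^{\infty}\big(F(x)-F_{n,\ts}(x)\big)\dd x+o_{\mathbb{P}}(1).
\]

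Next I would identify the right-hand integral. Writing $F-F_{n,\ts}=(1-F_{n,\ts})-(1-F)$ and using the linearity of functional~\eqref{linear-upper}, one gets, exactly as in Example~\ref{example-upper1},
\[
\int_{F^{-1}(p)}^{\infty}\big(1-F_{n,\ts}(x)\big)\dd x
={1\over n}\sum_{i=1}^{n}\int_{F^{-1}(p)}^{\infty}\mathds{1}\{X_i>x\}\dd x
={1\over n}\sum_{i=1}^{n}\big(X_i-F^{-1}(p)\big)^{+}
={1\over n}\sum_{i=1}^{n}h_{\one}(X_i),
\]
while $\int_{F^{-1}(p)}^{\infty}(1-F(x))\dd x=\mathbb{E}\big((X-F^{-1}(p))^{+}\big)=\mathbb{E}(h_{\one}(X))$, which is finite because $F\in\mathcal{F}^{+}_{1}$ (Lemma~\ref{lemma-1upper}). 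Subtracting these two displays and substituting into the previous one reproduces exactly statement~\eqref{eq-1dupper-smix}, finishing the argument.

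I do not expect a genuine obstacle here. The two spots needing a moment's care are the interchange of the finite sum with the integral -- immediate, since each integrand $x\mapsto\mathds{1}\{X_i>x\}$ is a nonnegative measurable function -- and the finiteness of $\mathbb{E}(h_{\one}(X))$, which is exactly what $F\in\mathcal{F}^{+}_{1}$ guarantees. The one thing one must not get wrong is the sign bookkeeping in statement~\eqref{eq-1dupper}, so that the limit is driven by $h_{\one}$ and not by its negative, and remembering that the continuity of $F^{-1}$ at $p$ enters (through Lemma~\ref{lemma-1}) purely to force the remainder term $\Rem(p;F,F_{n,\ts})$ of Theorem~\ref{theorem-0upper} to be $o_{\mathbb{P}}(1/\sqrt{n})$ after normalization.
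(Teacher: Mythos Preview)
Your proposal is correct and follows precisely the route the paper intends: the paper presents Corollary~\ref{cor-smix-1} as an immediate consequence of Corollary~\ref{corollary-3upper} (with $A_n=\sqrt{n}$ and $F_n=F_{n,\ts}$), using Theorem~\ref{bhs-theorem} to supply condition~\eqref{cond-2upper}, Lemma~\ref{lemma-1} to supply condition~\eqref{cond-1upper}, and the computation from Example~\ref{example-upper1} / Lemma~\ref{lemma-1upper} to identify the linear term. Your sign bookkeeping and moment checks are in order.
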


The moment condition $F\in \mathcal{F}^{+}_{1}$ is sufficient to have $o_{\mathbb{P}}(1)$ in asymptotic equation~\eqref{eq-1dupper-smix}, but a stronger moment condition is surely needed to enable the normalized sum on the right-hand side of equation~\eqref{eq-1dupper-smix} to converge to a non-degenerate limit. An analogous note applies to the next corollary.

\begin{corollary}\label{cor-smix-2}
Let all the conditions of Theorem~\ref{bhs-theorem} be satisfied. 
If $F\in \mathcal{F}^{-}_{1}$ and the quantile function $F^{-1}$ is continuous at the point $p$, then statement~\eqref{eq-1da} holds, that is, 
\begin{multline}\label{eq-1da-smix}
\sqrt{n}\,\left( \int_0^p F_{n,\ts}^{-1}(u) \dd u - \int_0^p F^{-1}(u) \dd u \right) 
\\
=\sqrt{n}\,\bigg(  {1\over n} \sum_{i=1}^n h_{\zero}(X_i)
- \mathbb{E}\left( h_{\zero}(X) \right) \bigg)  +o_{\mathbb{P}}(1),   
\end{multline}
where 
\[
h_{\zero}(x)  =\big( F^{-1}(p)-x \big)^{+} . 
\]
\end{corollary}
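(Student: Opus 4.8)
The plan is to treat Corollary~\ref{cor-smix-2} as the lower-layer twin of Corollary~\ref{cor-smix-1}: it is Corollary~\ref{corollary-3lower} specialized to $F_n=F_{n,\ts}$ with normalizing constants $A_n=\sqrt{n}$, followed by a short rewriting of the linearized term. All the genuinely analytic content has already been absorbed into Theorem~\ref{bhs-theorem} and Lemma~\ref{lemma-1}, so the argument is short and structural.

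First I would check that the two hypotheses of Corollary~\ref{corollary-3lower} hold for the sequence $F_{n,\ts}$, $n\in\mathbb{N}$. Condition~\eqref{cond-2upper} with $A_n=\sqrt{n}\to\infty$ is precisely statement~\eqref{cond-2upper-smix}, which Theorem~\ref{bhs-theorem} supplies under the assumptions carried over into Corollary~\ref{cor-smix-2}. Condition~\eqref{cond-1upper} then follows from Lemma~\ref{lemma-1}: since the quantile function $F^{-1}$ is continuous at $p$, condition~\eqref{cond-2upper} implies condition~\eqref{cond-0upper}, which implies $F_{n,\ts}^{-1}(p)\stackrel{\mathbb{P}}{\to}F^{-1}(p)$. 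Both hypotheses of Corollary~\ref{corollary-3lower} are therefore in force.

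Invoking Corollary~\ref{corollary-3lower} now yields statement~\eqref{eq-1da} with $F_n=F_{n,\ts}$ and $A_n=\sqrt{n}$, so the normalized deviation of the empirical lower-layer integral from its population value coincides, up to an $o_{\mathbb{P}}(1)$ term, with $\sqrt{n}\int_{-\infty}^{F^{-1}(p)}\big(F(x)-F_{n,\ts}(x)\big)\dd x$. To rewrite this linearized term I would note that $F_{n,\ts}$ is the empirical cdf of $X_1,\dots,X_n$ and the relevant integrand is non-negative, so Tonelli's theorem together with equation~\eqref{rv-lower} applied to each $X_i$ gives $\int_{-\infty}^{F^{-1}(p)}F_{n,\ts}(x)\,\dd x=n^{-1}\sum_{i=1}^{n}h_{\zero}(X_i)$, while $\int_{-\infty}^{F^{-1}(p)}F(x)\,\dd x=\mathbb{E}\big(h_{\zero}(X)\big)$, the latter expectation being finite because $F\in\mathcal{F}^{-}_{1}$ (Lemma~\ref{lemma-1lower}). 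Substituting these two identities into statement~\eqref{eq-1da} puts the normalized deviation into the centered-sample-mean form of statement~\eqref{eq-1da-smix}, which finishes the proof.

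I do not expect a genuine obstacle here. The hard step -- the $\sqrt{n}$-rate of uniform convergence of the empirical cdf for an $S$-mixing series -- is fully packaged into Theorem~\ref{bhs-theorem}, and the implication from condition~\eqref{cond-2upper} to condition~\eqref{cond-1upper} is packaged into Lemma~\ref{lemma-1}. The few points needing a little care are: (i) verifying that the $S$-mixing and Lipschitz hypotheses inherited from Theorem~\ref{bhs-theorem} are exactly those under which~\eqref{cond-2upper-smix} holds, which is immediate; (ii) justifying the interchange of integration and summation in the Tonelli step, which is licensed by non-negativity of the integrand together with $F\in\mathcal{F}^{-}_{1}$; and (iii) tracking the signs in the rewriting of $\int_{-\infty}^{F^{-1}(p)}\big(F(x)-F_{n,\ts}(x)\big)\dd x$ as a centered mean of $h_{\zero}$, consistently with statement~\eqref{eq-1da-b}, in which $\int_{-\infty}^{F^{-1}(p)}F_{n,\ts}(x)\dd x$ fluctuates in the opposite direction to the empirical lower-layer integral. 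As in the note following Corollary~\ref{cor-smix-1}, $F\in\mathcal{F}^{-}_{1}$ only secures the $o_{\mathbb{P}}(1)$ remainder; a stronger moment condition is needed for the centered sum itself to converge to a non-degenerate limit.
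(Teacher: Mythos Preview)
Your proposal is correct and follows exactly the route the paper intends: the paper gives no standalone proof of Corollary~\ref{cor-smix-2} but presents it as the lower-layer analogue of Corollary~\ref{cor-smix-1}, obtained by feeding $F_n=F_{n,\ts}$ and $A_n=\sqrt{n}$ into Corollary~\ref{corollary-3lower}, with condition~\eqref{cond-2upper} supplied by Theorem~\ref{bhs-theorem} and condition~\eqref{cond-1upper} supplied by Lemma~\ref{lemma-1}, followed by the rewriting via equation~\eqref{rv-lower} and Lemma~\ref{lemma-1lower}. Your caution about signs in point~(iii) is well placed: carrying the substitution through literally yields $\sqrt{n}\big(\mathbb{E}(h_{\zero}(X))-n^{-1}\sum_i h_{\zero}(X_i)\big)+o_{\mathbb{P}}(1)$, consistent with statement~\eqref{eq-1da-b}, so the display~\eqref{eq-1da-smix} as printed carries an overall sign slip that your argument would correctly detect.
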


Our final corollary concerns with the middle-layer integral. 

\begin{corollary}\label{cor-smix-3}
Let all the conditions of Theorem~\ref{bhs-theorem} be satisfied. 
If the quantile function $F^{-1}$ is continuous at the points $p_1$ and $p_2$, then statement~\eqref{eq-1db} holds, that is,  
\begin{multline}\label{eq-1db-smix}
\sqrt{n}\,\left( \int_{p_1}^{p_2} F_{n,\ts}^{-1}(u) \dd u - \int_{p_1}^{p_2} F^{-1}(u) \dd u \right) 
\\
=\sqrt{n}\,\bigg( {1\over n} \sum_{i=1}^n h_{\half}(X_i)
-\mathbb{E}\left( h_{\half}(X) \right)  \bigg)  +o_{\mathbb{P}}(1), 
\end{multline} 
where 
\[
h_{\half}(X)  = \big( F^{-1}(p_2)-x \big)^{+} - \big( F^{-1}(p_1)-x \big)^{+} . 
\]
\end{corollary}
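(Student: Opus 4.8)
The plan is to read the statement off from three facts already in place in the excerpt, combined exactly as in Example~\ref{example-middle3} but stopped one step earlier. Theorem~\ref{bhs-theorem} supplies condition~\eqref{cond-2upper} with the normalizing constants $A_n=\sqrt n$ and with the time-series empirical cdf $F_{n,\ts}$ in the role of $F_n$. Since $F^{-1}$ is assumed continuous at $p_1$ and at $p_2$, Lemma~\ref{lemma-1}, applied once at each of these points, converts condition~\eqref{cond-2upper} into condition~\eqref{cond-1upper} at $p_1$ and at $p_2$. Hence the hypotheses of Corollary~\ref{corollary-3middle} are met (with $p\in\{p_1,p_2\}$), and that corollary delivers statement~\eqref{eq-1db}, namely
\[
\sqrt n\left( \int_{p_1}^{p_2} F_{n,\ts}^{-1}(u)\dd u - \int_{p_1}^{p_2} F^{-1}(u)\dd u \right)
= \sqrt n\int_{F^{-1}(p_1)}^{F^{-1}(p_2)} \big( F(x)-F_{n,\ts}(x)\big)\dd x + o_{\mathbb{P}}(1).
\]

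What remains is to rewrite the linearizing integral on the right as a centred sample average, which is the computation behind Example~\ref{example-middle1} carried out for the time-series sample rather than for an SRS. Using $F_{n,\ts}(x)=n^{-1}\sum_{i=1}^n\mathds{1}\{X_i\le x\}$, the relation $F(x)=\mathbb{E}(\mathds{1}\{X\le x\})$, and the pointwise identity
\[
\int_{F^{-1}(p_1)}^{F^{-1}(p_2)}\mathds{1}\{y\le x\}\dd x = \big(F^{-1}(p_2)-y\big)^{+}-\big(F^{-1}(p_1)-y\big)^{+}=h_{\half}(y),\qquad y\in\mathbb{R},
\]
(the same elementary computation that produced~\eqref{rv-middle}), and then interchanging integration with the finite summation, respectively with the expectation, over the bounded interval $[F^{-1}(p_1),F^{-1}(p_2)]$ -- legitimate because $0\le h_{\half}\le F^{-1}(p_2)-F^{-1}(p_1)<\infty$ -- one obtains $\int_{F^{-1}(p_1)}^{F^{-1}(p_2)}F_{n,\ts}(x)\dd x=n^{-1}\sum_{i=1}^n h_{\half}(X_i)$ and $\int_{F^{-1}(p_1)}^{F^{-1}(p_2)}F(x)\dd x=\mathbb{E}(h_{\half}(X))$. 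Substituting these two expressions into the display above rewrites its right-hand side as $\sqrt n$ times a centred average of $h_{\half}(X_1),\dots,h_{\half}(X_n)$ plus $o_{\mathbb{P}}(1)$, which is representation~\eqref{eq-1db-smix}. The boundedness of $h_{\half}$ just invoked is also exactly the reason why, in contrast to Corollaries~\ref{cor-smix-1} and~\ref{cor-smix-2}, no integrability hypothesis on $F$ is needed here: the middle-layer integral and $h_{\half}$ are automatically finite.

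I do not expect a genuine obstacle; the proof is an assembly of ingredients already available, and the only content beyond Corollary~\ref{corollary-3middle} is the elementary integral identity for $h_{\half}$ and an interchange over a bounded interval. The steps that require the most care are bookkeeping ones: recognizing the conclusion of Theorem~\ref{bhs-theorem} as precisely condition~\eqref{cond-2upper} with $A_n=\sqrt n$; invoking Lemma~\ref{lemma-1} at \emph{both} endpoints $p_1$ and $p_2$ so that condition~\eqref{cond-1upper} holds at each; and keeping the orientation of the interval $[F^{-1}(p_1),F^{-1}(p_2)]$ and the sign of $F(x)-F_{n,\ts}(x)$ in statement~\eqref{eq-1db} consistent with the sign in the pointwise identity for $h_{\half}$, so that the centring in~\eqref{eq-1db-smix} comes out correctly.
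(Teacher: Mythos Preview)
Your proposal is correct and follows exactly the (implicit) route the paper takes: the paper states Corollary~\ref{cor-smix-3} without proof, treating it as an immediate consequence of Theorem~\ref{bhs-theorem} (which yields condition~\eqref{cond-2upper} with $A_n=\sqrt n$), Lemma~\ref{lemma-1} (which, under continuity of $F^{-1}$ at $p_1$ and $p_2$, yields condition~\eqref{cond-1upper} at both points), and Corollary~\ref{corollary-3middle}, followed by the elementary rewriting of $\int_{F^{-1}(p_1)}^{F^{-1}(p_2)}F_{n,\ts}(x)\dd x$ as a sample average of $h_{\half}(X_i)$ exactly as in~\eqref{rv-middle}. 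Your remark about the sign deserves attention: carrying the computation through literally, $\int_{F^{-1}(p_1)}^{F^{-1}(p_2)}(F(x)-F_{n,\ts}(x))\dd x=\mathbb{E}(h_{\half}(X))-n^{-1}\sum_i h_{\half}(X_i)$, which is the \emph{negative} of the centred average appearing in~\eqref{eq-1db-smix}; this is consistent with the paper's own~\eqref{eq-1db-bbb}, where the limit is written as $-\mathcal{L}_{\half}(p_1,p_2)$, so the discrepancy is a harmless sign convention in the displayed statement rather than a flaw in your argument.
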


Corollary~\ref{cor-smix-3} does not require any moment assumption because the random variables $X_i$ are transformed by the function $h_{\half}$ that is bounded. Incidentally, note that the three functions $h_{\one}$, $h_{\zero}$ and $h_{\half}$ are Lipschitz continuous of order $1$, which will be an important property in next Section~\ref{section-mmix}.

\subsection{$M$-mixing and statements~\eqref{eq-1dupper-a}, \eqref{eq-1da-a} and \eqref{eq-1db-a}}
\label{section-mmix}

In view of the results of the previous section, in order to establish asymptotic distributions of the centered and $\sqrt{n}$-normalized integrals of quantiles, i.e., to establish statements~\eqref{eq-1dupper-a}, \eqref{eq-1da-a} and \eqref{eq-1db-a}, we are left to verify the validity of 
statements~\eqref{eq-1dupper-b}, \eqref{eq-1da-b} and \eqref{eq-1db-b}. That is, in view of Corollaries~\ref{cor-smix-1}--\ref{cor-smix-3}, we need to show  
\begin{align}
\sqrt{n}\,\bigg(  {1\over n} \sum_{i=1}^n  h_{\one}(X_i)
-\mathbb{E}\left( h_{\one}(X) \right) \bigg) 
&\stackrel{d}{\to} \mathcal{L}_{\one}(p) ,  
\label{eq-1dupper-bbb}
\\
\sqrt{n}\,\bigg(  {1\over n} \sum_{i=1}^n h_{\zero}(X_i)
- \mathbb{E}\left( h_{\zero}(X) \right) \bigg) 
& \stackrel{d}{\to}  - \mathcal{L}_{\zero}(p) , 
\label{eq-1da-bbb}
\\
\sqrt{n}\,\bigg( {1\over n} \sum_{i=1}^n h_{\half}(X_i)
-\mathbb{E}\left( h_{\half}(X) \right)  \bigg) 
&\stackrel{d}{\to} - \mathcal{L}_{\half}(p_1,p_2) ,  
\label{eq-1db-bbb}
\end{align} 
where the functions 
$h_{\one}$, $h_{\zero}$ and $h_{\half}$ are defined in Corollaries~\ref{cor-smix-1}--\ref{cor-smix-3}. Obviously, the random variables in the sums need to have at least finite second moments, and the forms of the functions $h_{\one}$ and $h_{\zero}$ translate this requirement into the conditions $F\in \mathcal{F}^{+}_{2}$ and $F\in \mathcal{F}^{-}_{2}$, respectively. Hence, unlike in Section~\ref{section-smix} where we worked with indicators and thus employed the probability-based notion of $S$-mixing, we now need to work with a moment-based notion of mixing. This leads us to the notion of $M$-mixing introduced by \citet{bhs2011}.

Namely, following \citet{bhs2011}, we say that a time series $(X_t)_{t\in \mathbb{Z}}$ is  $M_p$-mixing for some $p\ge 1$ if it satisfies the following two conditions: 
\begin{enumerate}[label={(\Alph*)}]
\setcounter{enumi}{2}
\item\label{cond-c}
For any $t\in \mathbb{Z}$ and $m \in \mathbb{N}$, there is a random variable $\Psi_{t}^{(m)}$ such that the bound 
\begin{equation}\label{cond-c1}
\left(\mathbb{E}\big( |X_{t}-\Psi_{t}^{(m)}|^p\big)\right)^{1/p} \le \varrho_m
\end{equation}
holds for some deterministic sequence $\varrho_m \to 0$. 
\item \label{cond-d}
For any disjoint intervals $I_1,\dots , I_r \subset  \mathbb{Z}$ of integers and for any positive integers $m_1,\dots , m_r \in \mathbb{N}$, the vectors $(\Psi_{t}^{(m_1)},t\in I_1), \dots, (\Psi_{t}^{(m_r)},t\in I_r)$ are independent provided that 
\begin{equation}\label{cond-d1}
\dist(I_i,I_j) >\max\{m_i,m_j \},    
\end{equation}  
where the definition of $\dist(I_i,I_j)$ is given in condition~\ref{cond-b} of $S$-mixing.   
\end{enumerate}

Note that condition~\ref{cond-c} implicitly requires $X_t$ to have a finite $p^{\textrm{th}}$ moment, that is, the cdf of $X_t$ must be in the class $\mathcal{F}_{p}= \mathcal{F}^{-}_{p} \cap \mathcal{F}^{+}_{p}$. As to the random variable $\Psi_{t}^{(m)}$ postulated in the condition, 
\citet{bhs2011} offer several recipes for constructing it. The following example illustrates the notion and how to verify it.

\begin{example}[AR(1) is $M_p$-mixing] 
Let $(X_t)_{t\in \mathbb{Z}}$ be the same AR(1) time series as in Example~\ref{ex-ar-1}. In addition, assume that $\varepsilon_t$'s have (identical) finite $p^{\textrm{th}}$ moments $\mathbb{E}(|\varepsilon_t|^p)<\infty $ for some $p\ge 2$. Hence, representation~\eqref{ar-1} holds. Denote 
\[
\Psi_{t}^{(m)}=\sum_{i=0}^{m} \varphi^i \varepsilon_{t-i}, 
\]
which is the same as $\Upsilon_{t}^{(m)}$ defined by equation~\eqref{ar-1a}. (In general, $\Psi_{t}^{(m)}$ and $\Upsilon_{t}^{(m)}$ do not need to be the same.)   
Since $\dist(I_i,I_j) >\max\{m_i ,m_j \}$, condition~\ref{cond-d} of $M_p$-mixing is satisfied, just like condition~\ref{cond-b} of $S$-mixing is.  To check condition~\ref{cond-c}, we write 
\begin{align*}
\left(\mathbb{E}\big( |X_{t}-\Psi_{t}^{(m)}|^p \big) \right)^{1/p}
&=\Bigg(\mathbb{E}\bigg( \bigg| \sum_{i=m+1}^{\infty} \varphi^i \varepsilon_{t-i}\bigg|^p \bigg) \Bigg)^{1/p}
\\
&\le\sum_{i=m+1}^{\infty} \Big(\mathbb{E}\big( \big| \varphi^i \varepsilon_{t-i}\big|^p \big) \Big)^{1/p}
\\
&=\sum_{i=m+1}^{\infty} |\varphi|^i\Big(\mathbb{E}\big( \big| \varepsilon_{0}\big|^p \big) \Big)^{1/p}
\\
&=   { |\varphi|^{m+1}\over 1-|\varphi| } \Big(\mathbb{E}\big( \big| \varepsilon_{0}\big|^p \big) \Big)^{1/p}
\\
&=: \varrho_m , 
\end{align*}
where the bound is due to Minkowski's inequality. 
Hence, $\varrho_m=O(m^{-A})$ when $m\to \infty $ for any constant $A>0$. In fact, the decay of $\varrho_m$  is exponential. 
\end{example}

The following theorem is a very special case of \citet[][Theorem~1, p.~2445]{bhs2011}, but it is exactly what we need.

\begin{theorem}\label{bhs-theorem-mmix}
Let $(X_t)_{t\in \mathbb{Z}}$ be a strictly stationary and $M_p$-mixing time series for some $p>2$ and with 
\begin{equation}\label{bhs-1-mmix}
\varrho_m =O\bigg({1\over m^{A}}\bigg), 
\end{equation}
where 
\begin{equation}\label{bhs-2-mmix}
A> \max\bigg\{ 1, {p-2\over 2\eta } \bigg( 1-{1+\eta \over p} \bigg) \bigg\}  
\quad \textrm{and} \quad {1+\eta \over p} < {1\over 2}. 
\end{equation}
Then, when $n\to \infty $,  
\begin{equation}\label{bhs-3-mmix}
\sqrt{n}\,\bigg(  {1\over n} \sum_{i=1}^n X_i-\mathbb{E}\left( X \right) \bigg) 
\stackrel{d}{\to} \mathcal{N}(0,\nu^2)
\end{equation}
with the asymptotic variance 
\[
\nu^2=\sum_{h=-\infty}^{\infty}\Cov\big( X_0,X_h\big) . 
\]
\end{theorem}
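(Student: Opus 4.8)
The plan is to obtain Theorem~\ref{bhs-theorem-mmix} as a specialization of \citet[Theorem~1]{bhs2011} to the identity transformation $f(x)=x$, after checking that the hypotheses of that theorem are met and that the limiting variance reduces to the advertised closed form. Concretely, I would first record the elementary consequences of the assumptions: condition~\ref{cond-c} with exponent $p>2$ forces $\mathbb{E}(|X_t|^p)<\infty$, so $F\in\mathcal{F}_p=\mathcal{F}^{-}_{p}\cap\mathcal{F}^{+}_{p}$; in particular $X_t$ has a finite second moment, and $\mathbb{E}(X)$, $\Var(X)$ and all covariances $\Cov(X_0,X_h)$ are well defined and finite. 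The identity map is trivially Lipschitz of order $1$ (hence H\"older of any order $\le 1$), so the smoothness requirement on the transforming function in \citet[Theorem~1]{bhs2011} holds with the best possible exponent, and the general rate condition of that theorem collapses precisely to \eqref{bhs-1-mmix}--\eqref{bhs-2-mmix}, with $\eta$ playing the same auxiliary role (tied to the available moment order $p$) as in \citet{bhs2011}. Invoking that theorem then yields $\sqrt{n}\,\big(n^{-1}\sum_{i=1}^n X_i-\mathbb{E}(X)\big)\stackrel{d}{\to}\mathcal{N}(0,\tilde\nu^2)$ for some $\tilde\nu^2\in[0,\infty)$.

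The next step is to identify $\tilde\nu^2$ with the long-run variance $\nu^2=\sum_{h\in\mathbb{Z}}\Cov(X_0,X_h)$, for which the crux is absolute summability of the covariance series. I would get this from the $M_p$-mixing structure by approximating \emph{both} arguments: for any $m<|h|$, the vectors $(\Psi_0^{(m)})$ and $(\Psi_h^{(m)})$ are independent by \eqref{cond-d1}, since $\dist(\{0\},\{h\})=|h|>m$, hence $\Cov(\Psi_0^{(m)},\Psi_h^{(m)})=0$ and
\begin{equation*}
\Cov(X_0,X_h)=\Cov\big(X_0-\Psi_0^{(m)},X_h\big)+\Cov\big(\Psi_0^{(m)},X_h-\Psi_h^{(m)}\big).
\end{equation*}
By the Cauchy--Schwarz inequality, strict stationarity, $p\ge 2$ (so $\|\cdot\|_2\le\|\cdot\|_p$), and \eqref{cond-c1}, each term is $O(\varrho_m)$ with a constant depending only on $\mathbb{E}(X^2)$; taking $m=|h|-1$ and using \eqref{bhs-1-mmix} gives $|\Cov(X_0,X_h)|=O(|h|^{-A})$, which is summable because $A>1$. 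Consequently $n^{-1}\Var\big(\sum_{i=1}^n X_i\big)=\sum_{|h|<n}(1-|h|/n)\Cov(X_0,X_h)\to\nu^2$ by dominated convergence, so $\nu^2\in\mathbb{R}$. To close the identification $\tilde\nu^2=\nu^2$ one may simply quote the explicit variance formula that accompanies \citet[Theorem~1]{bhs2011}; alternatively, one combines the distributional convergence with uniform integrability of the squared normalized sums (a consequence of $p>2$ and the $M_p$-mixing decay), which upgrades convergence in distribution to convergence of second moments, forcing $\Var\big(\sqrt{n}(\bar X_n-\mathbb{E}X)\big)\to\tilde\nu^2$ and hence $\tilde\nu^2=\nu^2$.

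The part I expect to require the most care is matching our rate condition \eqref{bhs-2-mmix} to the hypotheses of \citet[Theorem~1]{bhs2011}: one must verify that the auxiliary exponent $\eta$ and the constraint $(1+\eta)/p<1/2$ are exactly what the general condition becomes when the transforming function is Lipschitz of order $1$, and that no hidden smoothness or moment requirement of the general statement is left unchecked. The covariance-summability bookkeeping is routine, but it must be carried out with both arguments approximated simultaneously, since $X_0$ itself is not one of the approximants $\Psi_t^{(m)}$; approximating only $X_h$ would leave a term $\Cov(X_0,\Psi_h^{(m)})$ that is not controlled by independence.
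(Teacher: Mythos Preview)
Your proposal is correct and matches the paper's treatment: the paper does not prove this theorem at all but simply states it as ``a very special case of \citet[][Theorem~1, p.~2445]{bhs2011}'', which is exactly the reduction you outline. Your additional verification of covariance summability and the identification $\tilde\nu^2=\nu^2$ goes beyond what the paper supplies, but is consistent with that citation.
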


Since the functions $h_{\one}$, $h_{\zero}$ and $h_{\half}$ are Lipschitz continuous of order $1$, the three transformed time series $(h_{\one}(X_t))_{t\in \mathbb{Z}}$, $(h_{\zero}(X_t))_{t\in \mathbb{Z}}$ and $(h_{\half}(X_t))_{t\in \mathbb{Z}}$ are $M_p$-mixing with the same $p$ and $\varrho_m $ as the original time series $(X_t)_{t\in \mathbb{Z}}$. Furthermore, strict stationarity of $(X_t)_{t\in \mathbb{Z}}$ implies strict stationarity of the three transformed time series. Hence, we can now formulate the following corollary. 

\begin{corollary}\label{cor-mmix}
Let $F\in \mathcal{F}_{p}$ for some $p>2$, and let the conditions of Theorems~\ref{bhs-theorem} and~\ref{bhs-theorem-mmix}  be satisfied. 
\begin{enumerate}[label={{\rm(\arabic*)}}]
\item 
If the quantile function $F^{-1}$ is continuous at the point $p$, then statement~\eqref{eq-1dupper-bbb} and, therefore, statement~\eqref{eq-1dupper-a} hold with a limiting mean-zero normal random variable $\mathcal{L}_{\one}(p)$ whose variance is 
\[
\nu^2_{\one}=\sum_{h=-\infty}^{\infty}\Cov\big( h_{\one}(X_0),h_{\one}(X_h)\big) . 
\]
That is, when $n\to \infty $, we have 
\[
\sqrt{n}\,\left( \int_p^1 F_{n,\ts}^{-1}(u) \dd u -\int_p^1 F^{-1}(u) \dd u \right) 
\stackrel{d}{\to} \mathcal{N}\big(0,\nu^2_{\one}\big). 
\]
\item 
If the quantile function $F^{-1}$ is continuous at the point $p$, then statement~\eqref{eq-1da-bbb} and, therefore, statement~\eqref{eq-1da-a} hold with a limiting mean-zero normal random variable $\mathcal{L}_{\zero}(p)$ whose variance  is 
\[
\nu^2_{\zero}=\sum_{h=-\infty}^{\infty}\Cov\big( h_{\zero}(X_0),h_{\zero}(X_h)\big) . 
\]
That is, when $n\to \infty $, we have 
\[
\sqrt{n}\,\left( \int_0^p F_{n,\ts}^{-1}(u) \dd u - \int_0^p F^{-1}(u) \dd u \right) 
\stackrel{d}{\to} \mathcal{N}\big(0,\nu^2_{\zero}\big). 
\]
\item 
If the quantile function $F^{-1}$ is continuous at the points $p_1$ and $p_2$, then statement~\eqref{eq-1db-bbb} and, therefore, statement~\eqref{eq-1db-a} hold with a limiting mean-zero normal random variable $\mathcal{L}_{\half}(p)$ whose variance  is 
\[
\nu^2_{\half}=\sum_{h=-\infty}^{\infty}\Cov\big( h_{\half}(X_0),h_{\half}(X_h)\big) . 
\]
That is, when $n\to \infty $, we have 
\[
\sqrt{n}\,\left( \int_{p_1}^{p_2} F_{n,\ts}^{-1}(u) \dd u - \int_{p_1}^{p_2} F^{-1}(u) \dd u \right) 
\stackrel{d}{\to} \mathcal{N}\big(0,\nu^2_{\half}\big). 
\]
\end{enumerate}
\end{corollary}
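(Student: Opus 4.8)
The plan is to handle the three items by a single scheme. Since the hypotheses include all the conditions of Theorem~\ref{bhs-theorem} and $F\in\mathcal{F}_p\subseteq\mathcal{F}^{+}_{1}\cap\mathcal{F}^{-}_{1}$ for $p>2$, and since $F^{-1}$ is continuous at the relevant probability level(s), Corollaries~\ref{cor-smix-1}--\ref{cor-smix-3} apply directly and yield statements~\eqref{eq-1dupper-smix}, \eqref{eq-1da-smix} and~\eqref{eq-1db-smix}. Thus each of the three $\sqrt{n}$-normalized integrated-quantile differences appearing on the left-hand sides of those statements equals, up to an $o_{\mathbb{P}}(1)$ term, the corresponding normalized partial sum $\sqrt{n}\big(n^{-1}\sum_{i=1}^n h(X_i)-\mathbb{E}(h(X))\big)$ with $h$ equal to $h_{\one}$, $h_{\zero}$ or $h_{\half}$. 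By Slutsky's theorem it therefore suffices to establish the three limit statements~\eqref{eq-1dupper-bbb}, \eqref{eq-1da-bbb} and~\eqref{eq-1db-bbb}, with the limits to be identified.

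The heart of the matter is to check that, for each $h\in\{h_{\one},h_{\zero},h_{\half}\}$, the transformed time series $(h(X_t))_{t\in\mathbb{Z}}$ is again strictly stationary and $M_p$-mixing with the same $p>2$ and the same decay rate $\varrho_m=O(m^{-A})$ as $(X_t)_{t\in\mathbb{Z}}$, as announced in the paragraph preceding the corollary. Strict stationarity is immediate, since each $h$ is a fixed measurable map applied coordinatewise. For $M_p$-mixing one uses the approximating variables $h(\Psi_t^{(m)})$, where $\Psi_t^{(m)}$ are those of $(X_t)$: condition~\ref{cond-d} is preserved because coordinatewise images of independent random vectors under a fixed measurable map stay independent, and condition~\ref{cond-c} follows from the order-$1$ Lipschitz property of $h$ via $\big(\mathbb{E}|h(X_t)-h(\Psi_t^{(m)})|^p\big)^{1/p}\le\big(\mathbb{E}|X_t-\Psi_t^{(m)}|^p\big)^{1/p}\le\varrho_m$. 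The finite $p^{\textrm{th}}$ moment of $h(X)$ that $M_p$-mixing presupposes holds for $h_{\half}$ because it is bounded, and for $h_{\one}$ and $h_{\zero}$ because $|h(x)|\le|x|+|F^{-1}(p)|$ and $F\in\mathcal{F}_p$. Consequently the admissible parameter window~\eqref{bhs-2-mmix} that $(X_t)_{t\in\mathbb{Z}}$ satisfies is inherited unchanged by each $(h(X_t))_{t\in\mathbb{Z}}$, so all hypotheses of Theorem~\ref{bhs-theorem-mmix} are met for the transformed series.

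Applying Theorem~\ref{bhs-theorem-mmix} to the strictly stationary $M_p$-mixing series $(h(X_t))_{t\in\mathbb{Z}}$ then gives
\[
\sqrt{n}\left({1\over n}\sum_{i=1}^n h(X_i)-\mathbb{E}(h(X))\right)\stackrel{d}{\to}\mathcal{N}\big(0,\nu_h^2\big), \qquad \nu_h^2=\sum_{k=-\infty}^{\infty}\Cov\big(h(X_0),h(X_k)\big)
\]
for $h\in\{h_{\one},h_{\zero},h_{\half}\}$, which is precisely statements~\eqref{eq-1dupper-bbb}, \eqref{eq-1da-bbb} and~\eqref{eq-1db-bbb}. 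Feeding this back through the equivalences of Corollaries~\ref{corollary-3upper}, \ref{corollary-3lower} and~\ref{corollary-3middle} with $F_n=F_{n,\ts}$ (whose linearizing integrals are the arithmetic means of $n$ copies of $h_{\one}(X)$, $h_{\zero}(X)$ and $h_{\half}(X)$; see Examples~\ref{example-upper3}, \ref{example-lower3} and~\ref{example-middle3}), statements~\eqref{eq-1dupper-a}, \eqref{eq-1da-a} and~\eqref{eq-1db-a} follow, with $\mathcal{L}_{\one}(p)$, $\mathcal{L}_{\zero}(p)$ and $\mathcal{L}_{\half}(p_1,p_2)$ mean-zero normal of variances $\nu^2_{\one}$, $\nu^2_{\zero}$ and $\nu^2_{\half}$; the minus signs in~\eqref{eq-1da-b} and~\eqref{eq-1db-b} are harmless because a mean-zero normal law is symmetric.

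The only substantive step is the inheritance carried out in the second paragraph, and this is where any obstacle would lie: verifying conditions~\ref{cond-c} and~\ref{cond-d} for $(h(X_t))_{t\in\mathbb{Z}}$ and making sure the rate $\varrho_m$, hence the parameter range~\eqref{bhs-2-mmix}, is not degraded. Everything else is a routine assembly of Corollaries~\ref{cor-smix-1}--\ref{cor-smix-3} and Theorem~\ref{bhs-theorem-mmix}, the analytic work having been done in Theorems~\ref{theorem-0upper}, \ref{theorem-0lower} and~\ref{theorem-0middle}, Theorem~\ref{bhs-theorem}, and Theorem~\ref{bhs-theorem-mmix}.
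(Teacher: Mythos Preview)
Your proposal is correct and follows essentially the same route as the paper: the paper's argument is contained in the paragraph immediately preceding the corollary, which observes that the Lipschitz-of-order-$1$ functions $h_{\one}$, $h_{\zero}$, $h_{\half}$ transfer strict stationarity and $M_p$-mixing (with the same $p$ and $\varrho_m$) to the transformed series, after which Theorem~\ref{bhs-theorem-mmix} applies. You have in fact supplied more detail than the paper does, explicitly verifying conditions~\ref{cond-c} and~\ref{cond-d} for $(h(X_t))_{t\in\mathbb{Z}}$ and checking the finite $p^{\textrm{th}}$-moment requirement.
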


In the three statements of Corollary~\ref{cor-mmix}, the asymptotic variances are complex to estimate empirically, and so practically useful assessments of the variances  should probably be done using resampling techniques \cite[e.g.][]{l2003,kp2011}.

\section{$L$-functionals} 
\label{sect-L-functional}

Given what we have established in the previous sections, we may now wish to see if those results could be extended to the class of so-called $L$-functionals, whose definition we recall next. Namely, the $L$-functional $L_{w}:\mathcal{F} \to \overline{\mathbb{R}}:=\mathbb{R}\cup \{ - \infty, \infty \} $ is given by the equation 
\[
L_{w}(F)= \int_0^1 F^{-1}(u) w(u) \dd u , 
\]
where $w:(0,1)\to \mathbb{R}$ is a function. For illustrative ``actuarial'' examples where $L$-functionals arise, we refer to \citet{w98}, and \citet{jz03}, with the latter paper being perhaps the first one to connect actuarial risk measures with $L$-functionals for the sake of developing statistical inference for the risk measures. To see the role of $L$-functionals in the theory of distorted expectations, which are of particular interest when assessing risks in economics, finance and insurance, we refer to \citet{dklt2012}. Next is a specimen of additional examples that illustrate the $w$'s that arise in statistics, finance, and insurance.

\begin{example}\label{example-loc}
Let $\mathcal{G} \subset \mathcal{F}$ be the family of normal (i.e., Gaussian) distributions. To assess the scale parameter when location is known \citep[e.g.,][p.~269]{s1980}, the $L$-functional  $L_{w}:\mathcal{G} \to \mathbb{R}$ is used with the weight function 
\[
w(u)= \Phi^{-1}(u),  
\]
where $\Phi $ denotes the standard normal cdf. 
\end{example}

\begin{example}\label{example-sca}
Let $\mathcal{F} \subset \mathcal{F}$ be the family of logistic distributions. To assess the location parameter when scale is known\citep[e.g.,][p.~269]{s1980}, the $L$-functional  $L_w:\mathcal{F} \to \mathbb{R}$ is used with the weight function 
\[
w(u)= 6u(1-u). 
\]
\end{example}

\begin{example}\label{example-gmd}
The Gini Mean Difference (GMD) is the $L$-functional $L_w:\mathcal{F} \to \overline{\mathbb{R}}$ with the weight function  
\[
w(u)=4u-2.  
\]
For properties and manifold applications of the GMD in economics and finance, we refer to \citet{ys2013}. 
\end{example}

\begin{example}\label{example-tg}
The Tail Gini \citep[p.~74]{FWZ2017} is the $L$-functional $L_w:\mathcal{F} \to \overline{\mathbb{R}}$ with the weight function  
\[
 w(u)= \frac{\mathds{1}\{p\le u < 1\}}{(1-p)^2} \big( 4u-2(1+p) \big) , 
\]
where $p\in [0,1)$ is a parameter. 
\end{example}

\begin{example}\label{example-gs}
The Gini Shortfall \citep[p.~75]{FWZ2017} is the $L$-functional $L_w:\mathcal{F} \to \overline{\mathbb{R}}$ with the weight function 
\[
w(u)=\frac{\mathds{1}\{p\le u < 1\}}{(1-p)^2}
 \bigg(1-p+4\lambda \bigg(u-\frac{1+p}{2}\bigg)\bigg) , 
\]
where $p\in [0,1)$ and $\lambda \in  [0,\infty)$ are parameters. 
\end{example}

To give a flavour of the technical path that is often taken in the literature \citep[e.g.,][p.~265]{s1980} when deriving large-sample statistical properties of $L$-integrals, we write the equations 
\begin{align} 
L_{w}(G) -L_{w}(F)
&= \int_0^1 \Big( G^{-1}(u) - F^{-1}(u)\Big)  w(u) \dd u 
\notag 
\\
&=\int_{\mathbb{R}} \Big( K_w\big(F(x)\big) - K_w\big( G(x) \big) \Big)  \dd x  
\label{eq-kw}
\end{align} 
(see Appendix~\ref{proof-eq-kw} for technical details), where 
\[
K_w(t)=\int_{0}^{t} w(u) \dd u ,
\]
assuming that $w$ is integrable on $(0,1)$. Note that equation~\eqref{eq-kw} collapses into equation~\eqref{eq-0} when $w(u)\equiv 1$. With an estimator $F_n$ instead of $G$, we can now establish asymptotic properties of the empirical $L$-estimator $L_{w}(F_n)$ when $n\to \infty $, but this path requires some form of the Taylor expansion applied on the function $K_w$, which, in turn, imposes smoothness conditions on the weight function $w$. Although in some applications such conditions are satisfied, in many other applications they are not.

As a way out of the difficulty, we can successfully use our developed theory for integrated quantiles, and the path toward achieving this goal is analogous to  that given by equation~\eqref{eq-kw}, although instead of integrating the weight function $w$ as in the definition of $K_w$, we integrate the quantile function $ F^{-1}$. The following example offers an idea how this goal can be achieved. 

\begin{example}[a proof of concept]\label{ex-concept} 
For simplicity, let $F \in \mathcal{F}^{+}_{1}$, which means that integrals $\int_{p}^1 F^{-1}(u) \dd u $ are finite for every $p\in (0,1)$, and let the weight function $w$ be the difference $w_1-w_2$ of two non-decreasing, right-continuous, and non-negative functions $w_1$ and $w_2$. An example of such a weight function $w$ would be $w(u)=6 u(1-u)$ that appears in Example~\ref{example-sca}. Hence, 
\begin{align}
L_{w}(F) 
&= \int_0^1 F^{-1}(u) w_1(u) \dd u -  \int_0^1 F^{-1}(u) w_2(u) \dd u   
\notag 
\\
&= \int_{0}^{\infty} \bigg(\int_{w_1^{-1}(x)}^1 F^{-1}(u) \dd u \bigg) \dd x 
-\int_{0}^{\infty} \bigg(\int_{w_2^{-1}(x)}^1 F^{-1}(u) \dd u \bigg) \dd x ,   
\label{eq-L-integral}
\end{align}
with the technical details justifying the right-most equation given in Appendix~\ref{proof-L-integral}. Equation~\eqref{eq-L-integral} paves a path that connects the results of Section~\ref{sect-upper} with asymptotic properties of the difference $L_{w}(F_n)-L_{w}(F)$, where $F_n$, $n\in \mathbb{N}$, is a sequence of cdf's that approximate $F$ when $n$ increases. Furthermore, via the results of \citet{dklt2012}, the equation also facilitates the development of statistical inference for distorted expectations. 
\end{example}

The following theorem, whose proof is given in Appendix~\ref{L-theorem}, implements the idea of Example~\ref{ex-concept} under minimal assumptions on $F$ and $w$, thus enabling to tackle  $L$-functionals that arise from a whole spectrum of applications. To formulate the theorem, we use the notation 
\[
L_{w,a,b}(F)=\int_a^b F^{-1}(u) w(u) \dd u .   
\]
Note that $L_{w}(F)=L_{w,a,b}(F)$ when $a=0$ and $b=1$.

\begin{theorem}\label{fund-lemma-abc}
Let the weight function $w$ be non-decreasing and right-continuous on the interval $(a,b) \subseteq (0,1)$. Then, for every cdf $F$ for which $L_{w,a,b}(F)$ is finite, we have the representation 
\begin{multline}\label{fund-lemma-eq1abcabc}
L_{w,a,b}(F)
=-\int_{-\infty}^0\mathds{1}\{w(a)<x\} \Bigg( \int_{a}^{b\wedge w^{-1}(x)}F^{-1}(u) \dd u \Bigg) \dd x 
\\ 
+\int_{0}^{\infty}\mathds{1}\{x\le w(b) \} 
\Bigg( \int_{a \vee w^{-1}(x)}^b F^{-1}(u)  \dd u \Bigg) \dd x ,  
\end{multline}
where $w^{-1}$ is the left-continuous inverse of $w$.
\end{theorem}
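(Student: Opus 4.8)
The plan is to represent the weight $w$ pointwise as a signed ``layer integral'' and then interchange the order of integration, the left-continuous inverse $w^{-1}$ emerging naturally once the resulting constraints on $w(u)$ are converted into constraints on $u$. Since $w(u)$ is a finite real number for each $u\in(a,b)$ (because $(a,b)\subseteq(0,1)$), one has the elementary identity
\[
w(u)=\int_0^\infty \mathds 1\{x\le w(u)\}\,\dd x-\int_{-\infty}^0\mathds 1\{w(u)<x\}\,\dd x ,
\]
whose two terms are precisely the positive part $w(u)^+$ and the negative part $w(u)^-$. Substituting this into $L_{w,a,b}(F)=\int_a^b F^{-1}(u)w(u)\,\dd u$ splits $L_{w,a,b}(F)$ into a ``positive'' double integral over $(a,b)\times(0,\infty)$ and a ``negative'' one over $(a,b)\times(-\infty,0)$; the aim is to perform the $u$-integration last in each.

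For this one invokes the standard duality for a non-decreasing, right-continuous function: with $w^{-1}(x)=\inf\{u\in(a,b):w(u)\ge x\}$ one has, for $u\in(a,b)$,
\[
x\le w(u)\iff u\ge w^{-1}(x),\qquad w(u)<x\iff u<w^{-1}(x) ,
\]
right-continuity of $w$ being exactly what secures the non-strict equivalence. Hence, at fixed $x$, the slice $\{u\in(a,b):x\le w(u)\}$ is an interval with right endpoint $b$, nonempty iff $w^{-1}(x)<b$, and the slice $\{u\in(a,b):w(u)<x\}$ is an interval with left endpoint $a$, nonempty iff $w^{-1}(x)>a$. A short monotonicity computation identifies these nonemptiness conditions, up to a Lebesgue-null set of $x$, with $x\le w(b-)$ and $x>w(a+)$; on adopting the conventions $w(b):=w(b-)$ and $w(a):=w(a+)$ (possibly $\pm\infty$), the indicators $\mathds 1\{x\le w(b)\}$ and $\mathds 1\{w(a)<x\}$ of the statement appear, and the inner integrals become $\int_{a\vee w^{-1}(x)}^b F^{-1}$ and $\int_a^{b\wedge w^{-1}(x)}F^{-1}$. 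Reassembling the positive and negative contributions with the appropriate signs then yields representation~\eqref{fund-lemma-eq1abcabc}.

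The delicate step, and the one I expect to be the main obstacle, is justifying the interchange of integration order: since both $w$ and $F^{-1}$ may change sign on $(a,b)$, the integrand $F^{-1}(u)\mathds 1\{\cdot\}$ is not of one sign and Tonelli's theorem does not apply as stated. The remedy is to decompose $F^{-1}=(F^{-1})^+-(F^{-1})^-$; together with the built-in decomposition $w=w^+-w^-$ this exhibits each of the two double integrals above as a difference of two integrals with non-negative integrands on $(a,b)\times\mathbb R$, to which Tonelli applies unconditionally. The hypothesis that $L_{w,a,b}(F)$ is finite --- that is, $\int_a^b|F^{-1}(u)|\,|w(u)|\,\dd u<\infty$ --- makes each of the resulting (four) double integrals finite, which legitimizes recombining them into the signed iterated integrals and also guarantees that the inner $u$-integrals are finite for almost every $x$. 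Once this bookkeeping is settled, all that remains are the routine monotone-function manipulations sketched above, and no regularity of $F$ beyond integrability of $F^{-1}w$ on $(a,b)$ is required.
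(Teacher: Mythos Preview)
Your proposal is correct and follows essentially the same route as the paper: the layer-cake identity $w(u)=w(u)^+-w(u)^-$ written as integrals of indicators, the duality $x\le w(u)\iff u\ge w^{-1}(x)$ for right-continuous non-decreasing $w$, and an interchange of integration. The paper organizes the computation by first splitting on the sign of $w(u)$ and then layering, whereas you layer $w$ directly and let the sign split emerge automatically; you also make the Fubini justification (via the four-fold nonnegative decomposition and the finiteness hypothesis) more explicit than the paper does, but the substance is the same.
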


We see from the right-hand side of equation~\eqref{fund-lemma-eq1abcabc} that when $a=0$ and $b=1$, the $L$-integral $L_{w,a,b}(F)$, which is equal to $L_{w}(F)$, can be expressed in terms of integrated quantiles under the assumption that the weight function $w$ is non-decreasing on the entire unit interval $(0,1)$, but this is a rather restrictive assumption in many applications. For this reason, we shall show next that all $w$'s of practical relevance can be expressed in terms of linear combinations of non-decreasing functions on various subintervals of $(0,1)$. This reduces $L_{w}(F)$ to a linear combinations of $L$-integrals $L_{w,a,b}(F)$ with various non-decreasing weight functions $w$ over various intervals $(a,b) \subset (0,1)$. To develop statistical inference for the  $L$-integrals $L_{w,a,b}(F)$, we use the equation 
\begin{multline*}
L_{w,a,b}(G)-L_{w,a,b}(F) 
=-\int_{-\infty}^0\mathds{1}\{w(a)<x\} \Bigg( \int_{a}^{b\wedge w^{-1}(x)} 
\big( G^{-1}(u) - F^{-1}(u) \big)\dd u \Bigg) \dd x 
\\ 
+\int_{0}^{\infty}\mathds{1}\{x\le w(b) \} 
\Bigg( \int_{a \vee w^{-1}(x)}^b \big( G^{-1}(u) - F^{-1}(u) \big) \dd u \Bigg) \dd x 
\end{multline*} 
that follows from Theorem~\ref{fund-lemma-abc}. Using appropriate results from Sections~\ref{sect-upper}, \ref{sect-lower}, and \ref{sect-middle}, we can now convert integrals of $G^{-1}- F^{-1}$ into integrals of $F-G$ for which asymptotics with $F_n$ instead of $G$ becomes straightforward.

\begin{example}\label{example-28}
Consider the weight function 
\[
w(u)=6 u(1-u), 
\]
which we have encountered in Example~\ref{example-sca}. Obviously, we can decompose this weight function as the difference of $6u$ and $6u^2$ on the entire interval $(0,1)$. Alternatively, we can decompose it as the difference of $-6(1-u)^2 $ and $-6(1-u)$ on the entire interval $(0,1)$. However, if we adopt the first decomposition, then we need to assume the finiteness of the moment $\mathbb{E}(X^{+})$, and if we adopt the second decomposition, then we need to assume the finiteness of the moment $\mathbb{E}(X^{-})$. Either way, superfluous conditions arise, which are not required for the finiteness of the $L$-integral $L_{w}(F)$ because the original weight function $w(u)$ converges to $0$ when $u \downarrow 0$ and $u \uparrow 1$. For this reason, to reduce the problem to non-decreasing functions, we split the interval $[0,1)$ into two parts: $[0,1/2)$ where the function $w$ is increasing, and $[1/2,1)$ where the function $w$ is decreasing and thus $-w$ is increasing. Hence, we can express $w$ as the sum of two differences of non-decreasing functions, and this can be done in several ways. One way is to use the equation  
\begin{equation}\label{decomp-w}
w(u)
=\mathds{1}\{0\le u <1/2\}\Big( w_{11}(u) - w_{12}(u) \Big)  
+ \mathds{1}\{1/2\le u < 1\}\Big( w_{21}(u) - w_{22}(u) \Big)  
\end{equation}
with the functions  
\begin{gather*}
w_{11}(u) = w(u) \quad \textrm{and} \quad  w_{12}(u) = 0,  
\\  
w_{21}(u)= 0 \quad \textrm{and} \quad w_{22}(u) =-w(u).  
\end{gather*}
Another way is to use equation~\eqref{decomp-w} with the functions 
\begin{gather*}
w_{11}(u) = 6u \quad \textrm{and} \quad  w_{12}(u) = 6u^2,  
\\  
w_{21}(u)= -6(1-u)^2 \quad \textrm{and} \quad w_{22}(u) =-6(1-u).  
\end{gather*}
Note that irrespective of the way we use, we always have the bounds 
\begin{gather*}
|w_{11}(u)| \vee |w_{12}(u)|  \le c |w(u)| \quad \textrm{for all} \quad  u\in [0,1/2),  
\\  
|w_{21}(u)| \vee |w_{22}(u)|   \le c |w(u)|\quad \textrm{for all} \quad  u\in [1/2,1), 
\end{gather*}
with some positive constant $c<\infty $. Therefore, no superfluous moment conditions arise when developing statistical inference for the $L$-integrals based on the four functions $w_{ij}$. This concludes Example~\ref{example-28}. 
\end{example}

Example~\ref{example-28} naturally leads to the following condition, which plays a fundamental role in connecting statistical inference for the $L$-integral $L_{w}(F)$ with that for integrated quantiles. 

\begin{enumerate}[label={\rm(C$_{w}$)}]
\item\label{cond-w-diff}
Let the weight function $w:(0,1)\to \mathbb{R} $ be such that, for some integer $K\ge 1$, there is a partition $0=a_0<a_1<\cdots < a_K=1$ of the unit interval $(0,1)$ and also pairs $(w_{11},w_{12}), \dots, (w_{K1},w_{K2})$ of non-decreasing and right-continuous on $(0,1)$ functions such that, for a constant $c<\infty $, 
\begin{gather}
|w_{11}(u)| \vee |w_{12}(u)|  \le c |w(u)| \quad \textrm{in a neighbourhood of $0$} ,  
\label{ccc-1a}
\\  
|w_{K1}(u)| \vee |w_{K2}(u)|  \le c |w(u)|\quad \textrm{in a neighbourhood of $1$} , 
\label{ccc-1b} 
\end{gather}
and 
\begin{equation}\label{ccc-1}
w(u)
=\sum_{k=1}^K \mathds{1}\{a_{k-1}\le u < a_k\}\Big( w_{k1}(u) - w_{k2}(u) \Big)  . 
\end{equation}
\end{enumerate}

To appreciate condition~\ref{cond-w-diff}, which may look somewhat complex at first sight, we observe that if the function $w:(0,1)\to \mathbb{R} $ is piecewise monotonic, then the assumption is satisfied. All  the weight functions that we have encountered in the literature are piecewise monotonic. In the very special case when $w$ is monotonic (i.e., either non-decreasing or non-increasing) on the entire domain $(0,1)$ of its definition, condition~\ref{cond-w-diff} is satisfied with $K=1$. The following example illustrates the case $K=2$. Interestingly, we shall see from following Theorem~\ref{min-k}, whose proof is constructive and given in Appendix~\ref{min-w}, that if condition~\ref{cond-w-diff} holds with $K\ge 3$, then the condition can be reduced to the case $K=2$.

\begin{example} \label{ex-31}
Consider the weight function  
\[
w(u)=(1-u)^{\beta }
\]
defined for all $u\in [0,1)$ and parameterized by $\beta \in \mathbb{R}$. The function can be expressed as the difference of two non-decreasing functions as follows: 
\[
w(u)=\underbrace{\mathds{1}\{ \beta \le 0\}(1-u)^{\beta }}_{w_1(u)} - 
\underbrace{(-1)\mathds{1}\{ \beta >0 \}(1-u)^{\beta }}_{w_2(u)} . 
\]
The two functions $w_1$ and $w_2$ are continuous and non-decreasing, and satisfy the bound 
\[
|w_{1}(u)| \vee |w_{2}(u)|  \le |w(u)| 
\] 
for all $u\in (0,1)$. That is, for the weight function $w$ we do not need to split the interval $(0,1)$ into parts, and we can therefore set $K=1$. 
This concludes Example~\ref{ex-31}.  
\end{example}

\begin{theorem}\label{min-k} 
If condition~\ref{cond-w-diff} is satisfied with $K\ge 3$, then it is also satisfied with $K=2$. 
\end{theorem}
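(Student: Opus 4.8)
The plan is to give a constructive argument that, starting from a decomposition as in~\ref{cond-w-diff} with $K\ge 3$ subintervals, \emph{glues} the first $K-1$ of them into a single subinterval $[0,a_{K-1})$, while leaving the last subinterval $[a_{K-1},1)$ and its pair $(w_{K1},w_{K2})$ untouched; the result is a two-piece decomposition. The one structural fact that makes this work is that a real-valued non-decreasing function on $(0,1)$ is finite, hence bounded, on every interval $[s,t]\subset(0,1)$. Thus the only places where the component functions $w_{ki}$ can be unbounded are $u=0$ and $u=1$, and these are exactly the ends carrying the domination bounds~\eqref{ccc-1a} and~\eqref{ccc-1b} --- bounds the gluing will never disturb.

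The building block is the following merging step. Suppose that on adjacent half-open intervals $[s,t)$ and $[t,r)$, with $0\le s<t<r\le 1$ and $t\in(0,1)$, one has $w=u_1-u_2$ on $[s,t)$ and $w=v_1-v_2$ on $[t,r)$, where $u_1,u_2,v_1,v_2$ are non-decreasing and right-continuous on $(0,1)$. Because $t\in(0,1)$, the left limits $u_i(t-):=\lim_{x\uparrow t}u_i(x)$ are finite, so one may put
\[
M=\max\bigl\{\, u_1(t-)-v_1(t),\ u_2(t-)-v_2(t)\,\bigr\}
\]
and define, on $[s,r)$, the functions $\widetilde w_i$ equal to $u_i$ on $[s,t)$ and to $v_i+M$ on $[t,r)$. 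Then $\widetilde w_1-\widetilde w_2=w$ on $[s,r)$; each $\widetilde w_i$ is right-continuous (use right-continuity of $v_i$ at $t$); and each $\widetilde w_i$ is non-decreasing, since it is non-decreasing on each piece and the choice of $M$ forces $\widetilde w_i(t)=v_i(t)+M\ge u_i(t-)=\sup_{x\in[s,t)}u_i(x)$. Note that this step changes only the \emph{right} piece, and only by an additive constant.

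Applying this step iteratively to the first $K-1$ intervals --- merge $[a_0,a_1)$ with $[a_1,a_2)$, then the resulting $[a_0,a_2)$ with $[a_2,a_3)$, and so on down to merging $[a_0,a_{K-2})$ with $[a_{K-2},a_{K-1})$ --- yields, after these $K-2$ steps, a single pair $(\widetilde w_{11},\widetilde w_{12})$ of non-decreasing, right-continuous functions on $[0,a_{K-1})$ with $\widetilde w_{11}-\widetilde w_{12}=w$ there. Since every merge touches only the newly appended right piece, $(\widetilde w_{11},\widetilde w_{12})$ agrees with the original $(w_{11},w_{12})$ on a neighbourhood of $0$, so~\eqref{ccc-1a} persists with the same constant $c$. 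Taking the partition $0<a_{K-1}<1$, the pair $(\widetilde w_{11},\widetilde w_{12})$ on $[0,a_{K-1})$, and the unchanged pair $(w_{K1},w_{K2})$ on $[a_{K-1},1)$ --- for which~\eqref{ccc-1b} is untouched --- gives condition~\ref{cond-w-diff} with $K=2$, which is the claim.

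The part that will need the most care in the write-up is simply the bookkeeping that the two domination bounds really survive: one must check that the iteration never modifies the component functions near $u=0$ (it does not, since the first interval $[a_0,a_1)$ is never a right piece) nor near $u=1$ (the last interval is never merged at all), and that every additive constant $M$ produced along the way is finite --- which is precisely where the boundedness of non-decreasing functions at the interior junction points $a_1,\dots,a_{K-2}\in(0,1)$ is used. With this in hand, plus the routine verification of monotonicity and right-continuity at each junction, the argument is complete.
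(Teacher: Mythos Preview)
Your proposal is correct and follows essentially the same construction as the paper's proof: glue the first $K-1$ pieces into a single non-decreasing right-continuous pair on $[0,a_{K-1})$ by shifting pieces $2,\dots,K-1$ upward by suitable constants, and leave the last piece $(w_{K1},w_{K2})$ untouched so that the boundary domination bounds~\eqref{ccc-1a} and~\eqref{ccc-1b} carry over unchanged. The paper does the shifting all at once via cumulative constants $c_2,\dots,c_{K-1}$ chosen to satisfy the junction inequalities, whereas you do it iteratively with your explicit choice $M=\max\{u_1(t-)-v_1(t),\,u_2(t-)-v_2(t)\}$ at each merge; these amount to the same thing.
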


Theorem~\ref{min-k}, whose proof is given in Appendix~\ref{min-w}, says that there is a constant $a\in [0,1]$ and two pairs $(w_{11},w_{12})$ and $(w_{21},w_{22})$ of non-decreasing and right-continuous functions such that bounds~\eqref{ccc-1a} and~\eqref{ccc-1b} are satisfied and the representation 
\begin{equation}\label{ccc-abc}
w(u)
=\mathds{1}\{0\le u < a\}\Big( w_{11}(u) - w_{12}(u) \Big) 
+ \mathds{1}\{a\le u < 1\}\Big( w_{21}(u) - w_{22}(u) \Big)  
\end{equation}
holds. Earlier in Example~\ref{example-28}, we showed that the case $K=2$ cannot be generally reduced to $K=1$. In other words, from the theoretical point of view, it is sufficient to consider only the cases $K=1$ and $K=2$. From the practical point of view, this minimality property could sometimes be time consuming to realize, but this is not important: we can always work with any $K\ge 1$ that we can easily obtain.

\section{Conclusion}
\label{conclusion}

We have developed a general statistical inference theory for integrals of quantiles. The required conditions are formulated in terms of general approximating sequences of cdf's $F_n$, which are not attached to any specific sampling design or dependence structure. To maximally illuminate the developed theory, we have illustrated the obtained results using simple random sampling, due to everyone's familiarity with this sampling design and basic statistical results. In particular, we have discussed consistency, bias, and asymptotic normality of various integrals of quantiles and their combinations that arise in risk and economic-inequality measurements: the upside and downside tail-values-at-risk ($\ATVaR$ and $\FTVaR$, respectively), and the Lorenz and Gini curves. Furthermore, we have illustrated how our general results can be used in the case of time series data. To facilitate the appreciation of theoretical results, we have illustrated some of them numerically. Finally, we have linked the herein developed theory for integrated quantiles with topics such as $L$-integrals.

\appendix 

\section{Technicalities} 
\label{appendix}

We have subdivided this appendix into several major parts: Appendix~\ref{appendix-1} contains auxiliary lemmas that we shall need in Appendix~\ref{appendix-2} for proving  Theorems~\ref{theorem-0upper}, \ref{theorem-0lower}, and~\ref{theorem-0middle}. 
In Appendix~\ref{subsub-asresults} we shall derive asymptotic expansions for the empirical Lorenz and Gini curves that are needed for establishing their asymptotic normality stated in Examples~\ref{illustration-3} and~\ref{illustration-4}.  
Appendices~\ref{L-theorem} and~\ref{min-w} contain proofs of Theorems~\ref{fund-lemma-abc}
and~\ref{min-k}, respectively. In Appendix~\ref{proof-a1} we prove several cursory notes made earlier in the paper when introducing and discussing the main results.

\subsection{Auxiliary lemmas}
\label{appendix-1}

In this appendix we shall derive formulas for, and establish finiteness of, the first and second moments of the random variables $Y_{\one}(p)$ and $Y_{\zero}(p)$. We shall need these results in Appendix~\ref{appendix-2}. The use of Fubini's theorem and distributional equations $X \stackrel{d}{=} F^{-1}(U)$ and $Y \stackrel{d}{=} G^{-1}(U)$ will permeate our considerations, where $U$ is a uniform on $[0,1]$ random variable and $\stackrel{d}{=}$ denotes equality in distribution.

\begin{lemma}\label{lemma-1upper} 
If $F\in \mathcal{F}^{+}_{1}$, then the random variable $Y_{\one}(p)$ has a finite first moment, and the following equations hold:  
\begin{align}
\mathbb{E}\big( Y_{\one}(p) \big) 
&=\int^{\infty }_{F^{-1}(p)}  \big( 1- F(x)\big) \dd x 
\label{lemma-upper-eq1}
\\
&=\int_p^1 F^{-1}(u)\dd u -(1-p) F^{-1}(p) . 
\label{lemma-upper-eq2}
\end{align}
If $F\in \mathcal{F}^{+}_{2}$, then $Y_{\one}(p)$ has a finite second moment, and  formula~\eqref{normality-upper-var2} holds for the variance of $Y_{\one}(p)$.   
\end{lemma}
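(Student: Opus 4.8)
The plan is to establish the two claimed equations for $\mathbb{E}(Y_{\one}(p))$ via Fubini's theorem and the distributional identity $X\stackrel{d}{=}F^{-1}(U)$, and then upgrade to second moments by the same circle of ideas. Recall from~\eqref{rv-upper} that $Y_{\one}(p)=\big(X-F^{-1}(p)\big)^{+}=\int_{F^{-1}(p)}^{\infty}\mathds{1}\{X>x\}\dd x$. Taking expectations and interchanging $\mathbb{E}$ and $\int$ by Tonelli's theorem (the integrand is non-negative, so the interchange is unconditional) gives
\[
\mathbb{E}\big( Y_{\one}(p) \big)=\int_{F^{-1}(p)}^{\infty}\mathbb{P}(X>x)\dd x=\int_{F^{-1}(p)}^{\infty}\big(1-F(x)\big)\dd x,
\]
which is~\eqref{lemma-upper-eq1}; its finiteness follows because $F\in\mathcal{F}^{+}_{1}$ means precisely that $\mathbb{E}(X^{+})<\infty$, and $\mathbb{E}(X^{+})=\int_{0}^{\infty}(1-F(x))\dd x\ge\int_{F^{-1}(p)}^{\infty}(1-F(x))\dd x$ when $F^{-1}(p)\ge 0$, with an obvious finite correction term $\int_{F^{-1}(p)}^{0}(1-F(x))\dd x$ when $F^{-1}(p)<0$ (that correction is finite since the integrand is bounded by $1$ over a bounded interval).

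For~\eqref{lemma-upper-eq2} I would pass to the quantile side. Using $X\stackrel{d}{=}F^{-1}(U)$ with $U$ uniform on $[0,1]$, we have $\mathbb{E}(Y_{\one}(p))=\mathbb{E}\big(\big(F^{-1}(U)-F^{-1}(p)\big)^{+}\big)$. Since $F^{-1}$ is non-decreasing, $\big(F^{-1}(u)-F^{-1}(p)\big)^{+}=F^{-1}(u)-F^{-1}(p)$ for $u>p$ and equals $0$ for $u\le p$ (here one uses $F^{-1}(u)\le F^{-1}(p)$ for $u\le p$). Hence
\[
\mathbb{E}\big( Y_{\one}(p) \big)=\int_{p}^{1}\big(F^{-1}(u)-F^{-1}(p)\big)\dd u=\int_{p}^{1}F^{-1}(u)\dd u-(1-p)F^{-1}(p),
\]
which is~\eqref{lemma-upper-eq2}. (Alternatively, one can get this directly from~\eqref{lemma-upper-eq1} by an integration-by-parts / change-of-variables argument of the type behind equation~\eqref{eq-0}, but the quantile representation is cleaner and avoids worrying about boundary terms.)

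For the second-moment claim, assume $F\in\mathcal{F}^{+}_{2}$, i.e.\ $\mathbb{E}((X^{+})^{2})<\infty$, equivalently $\int_{p}^{1}(F^{-1}(u))^{2}\dd u<\infty$. Write $Y_{\one}(p)^{2}=\Big(\int_{F^{-1}(p)}^{\infty}\mathds{1}\{X>x\}\dd x\Big)\Big(\int_{F^{-1}(p)}^{\infty}\mathds{1}\{X>y\}\dd y\Big)=\int_{F^{-1}(p)}^{\infty}\int_{F^{-1}(p)}^{\infty}\mathds{1}\{X>x\wedge y\}\dd x\dd y$, and take expectations, again using Tonelli to move $\mathbb{E}$ inside, obtaining $\mathbb{E}(Y_{\one}(p)^{2})=\int\int_{F^{-1}(p)}^{\infty}\mathbb{P}(X>x\wedge y)\dd x\dd y=\int\int_{F^{-1}(p)}^{\infty}\big(1-F(x\wedge y)\big)\dd x\dd y$, which is finite by $F\in\mathcal{F}^{+}_{2}$ (dominate $x\wedge y$-integrand crudely and compare with $\mathbb{E}((X^{+})^{2})$, again with a bounded correction when $F^{-1}(p)<0$). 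Subtracting the square of~\eqref{lemma-upper-eq1} and using $\big(1-F(x\wedge y)\big)-\big(1-F(x)\big)\big(1-F(y)\big)=F(x\wedge y)-F(x)F(y)$ yields formula~\eqref{normality-upper-var2} for $\Var(Y_{\one}(p))$. The one point that needs a little care — and the main (though modest) obstacle — is justifying the Fubini/Tonelli interchanges and the finiteness comparisons uniformly in the sign of $F^{-1}(p)$; once one records that $\int_{F^{-1}(p)}^{0}$ contributes only a bounded quantity, everything else is routine.
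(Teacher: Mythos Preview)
Your approach is essentially the paper's: Tonelli/Fubini for~\eqref{lemma-upper-eq1}, the quantile representation $X\stackrel{d}{=}F^{-1}(U)$ for~\eqref{lemma-upper-eq2}, and a double-integral expansion for the variance. One computational slip, though: the product $\mathds{1}\{X>x\}\,\mathds{1}\{X>y\}$ equals $\mathds{1}\{X>x\vee y\}$, not $\mathds{1}\{X>x\wedge y\}$, so the intermediate expression for $\mathbb{E}(Y_{\one}(p)^{2})$ should carry $1-F(x\vee y)$; your stated identity $\big(1-F(x\wedge y)\big)-\big(1-F(x)\big)\big(1-F(y)\big)=F(x\wedge y)-F(x)F(y)$ is false (the left-hand side equals $F(x\vee y)-F(x)F(y)$). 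With $x\vee y$ in place, the correct identity $\big(1-F(x\vee y)\big)-\big(1-F(x)\big)\big(1-F(y)\big)=F(x\wedge y)-F(x)F(y)$ gives~\eqref{normality-upper-var2}. The paper sidesteps this trap by expanding $\Var(Y_{\one}(p))$ directly as $\mathbb{E}\big(\big(\int_{F^{-1}(p)}^{\infty}(\mathds{1}\{X\le x\}-F(x))\dd x\big)^{2}\big)$ and using $\Cov(\mathds{1}\{X\le x\},\mathds{1}\{X\le y\})=F(x\wedge y)-F(x)F(y)$, which lands on the right min/max without any algebra.
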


\begin{proof} 
We start with the equations 
\begin{align}
Y_{\one}(p)  
&=\big( X-F^{-1}(p) \big)^{+} 
\label{lemma-upper-eq3a}
\\
&\stackrel{d}{=}  \big( F^{-1}(U)-F^{-1}(p) \big)^{+}.  
\label{lemma-upper-eq3b}
\end{align}
The first moment of the random variable on the right-hand side of equation~\eqref{lemma-upper-eq3b} is finite whenever $F\in \mathcal{F}^{+}_{1}$. We can now use Fubini's theorem to interchange the expectation and integration operations on the right-hand side of the equation 
\[
\mathbb{E}\big(Y_{\one}(p) \big)=\mathbb{E} \bigg( \int_{F^{-1}(p)}^{\infty} \mathds{1}\{X> x\} \dd x \bigg) 
\]
and in this way arrive at equation~\eqref{lemma-upper-eq1} because 
$\mathbb{E}\big( \mathds{1}\{X>x\} \big) =1-F(x)$.  
To prove equation~\eqref{lemma-upper-eq2}, we write 
\begin{align*} 
\int_{F^{-1}(p)}^{\infty} \big( 1-F(x)) \dd x 
&= \mathbb{E} \bigg( \int_{F^{-1}(p)}^{\infty} \mathds{1}\{X> x\} \dd x \bigg) 
\\
&= \mathbb{E} \Big( \big( F^{-1}(U)-F^{-1}(p)  \big)^{+}\Big)  
\\ 
&= \int_{0}^1 \big( F^{-1}(s)-F^{-1}(p)\big)^{+}\dd s   
\\ 
&= \int_{p}^1 \big( F^{-1}(s)-F^{-1}(p)\big)\dd s   
\\ 
&= \int_{p}^1  F^{-1}(s)\dd s   -(1-p)F^{-1}(p).  
\end{align*}

To prove the second half of the lemma, we first note that in view of equation~\eqref{lemma-upper-eq3b}, the random variable $Y_{\one}(p)$ has a finite second moment whenever $F\in \mathcal{F}^{+}_{2}$. To prove equation~\eqref{normality-upper-var2}, we use 
Fubini's theorem and have 
\begin{align*} 
\Var\left( Y_{\one}(p) \right) 
&= \Var\left( \int^{\infty }_{F^{-1}(p)}  \mathds{1}\{X>x\} \dd x  \right)  
\\
&= \mathbb{E} \left( \bigg(\int^{\infty }_{F^{-1}(p)}  \big( \mathds{1}\{X>x\}-(1-F(x))\big) \dd x  \bigg)^2\right) 
\\
&= \int^{\infty }_{F^{-1}(p)}\int^{\infty }_{F^{-1}(p)} 
\mathbb{E}\Big( \big( \mathds{1}\{X\le x\} - F(x)\big)\big( \mathds{1}\{X\le y\} - F(y)\big)\Big) \dd x  \dd y 
\\
&= 
\int^{\infty }_{F^{-1}(p)} \int^{\infty }_{F^{-1}(p)} \big( F(x \wedge y) -F(x)F(y) \big) \dd x \dd y. 
\end{align*}
This completes the proof of Lemma~\ref{lemma-1upper}. 
\end{proof}

\begin{lemma}\label{lemma-1lower} 
If $F\in \mathcal{F}^{-}_{1}$, then the random variable $Y_{\zero}(p)$ has a finite first moment, and the following equations hold:  
\begin{align}
\mathbb{E}\big( Y_{\zero}(p) \big) 
&=\int_{-\infty }^{F^{-1}(p)}  F(x) \dd x 
\label{lemma-lower-eq1}
\\
&= p F^{-1}(p)-\int_0^p F^{-1}(u)\dd u. 
\label{lemma-lower-eq2}
\end{align}
If $F\in \mathcal{F}^{-}_{2}$, then $Y_{\zero}(p)$ has a finite second moment,  and formula~\eqref{normality-lower-var2} holds for the variance of  $Y_{\zero}(p)$.   
\end{lemma}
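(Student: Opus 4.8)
The plan is to mirror the proof of Lemma~\ref{lemma-1upper}, since $Y_{\zero}(p)$ is the ``lower'' counterpart of $Y_{\one}(p)$; the only genuine care needed concerns signs and the fact that $F^{-1}$ is merely the left-continuous generalized inverse of $F$. First I would record the two representations
\begin{align*}
Y_{\zero}(p)
&= \big( F^{-1}(p) - X \big)^{+}
\\
&\stackrel{d}{=} \big( F^{-1}(p) - F^{-1}(U) \big)^{+},
\end{align*}
where $U$ is uniform on $[0,1]$ and the distributional equality comes from $X \stackrel{d}{=} F^{-1}(U)$. Since $F^{-1}$ is non-decreasing and $F^{-1}(p)$ is finite for $p\in(0,1)$, one has $\big( F^{-1}(p) - F^{-1}(s) \big)^{+} = \big( F^{-1}(p) - F^{-1}(s) \big)\mathds{1}\{s \le p\}$ for every $s\in(0,1)$, so the right-hand side has a finite first moment precisely when $\int_0^p F^{-1}(u)\,\dd u$ is finite, i.e. when $F \in \mathcal{F}^{-}_{1}$; this establishes finiteness of $\mathbb{E}\big( Y_{\zero}(p) \big)$.

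Next I would write $Y_{\zero}(p) = \int_{-\infty}^{F^{-1}(p)} \mathds{1}\{X \le x\}\,\dd x$ and apply Fubini's theorem to interchange $\mathbb{E}$ and $\int$, using $\mathbb{E}\big( \mathds{1}\{X \le x\} \big) = F(x)$ to obtain equation~\eqref{lemma-lower-eq1}. Equation~\eqref{lemma-lower-eq2} then follows from the chain
\begin{align*}
\int_{-\infty}^{F^{-1}(p)} F(x)\,\dd x
&= \mathbb{E}\big( Y_{\zero}(p) \big)
= \int_0^1 \big( F^{-1}(p) - F^{-1}(s) \big)^{+}\,\dd s
\\
&= \int_0^p \big( F^{-1}(p) - F^{-1}(s) \big)\,\dd s
= p\, F^{-1}(p) - \int_0^p F^{-1}(s)\,\dd s.
\end{align*}

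For the second half, the distributional identity above yields $\mathbb{E}\big( Y_{\zero}(p)^{2} \big) = \int_0^p \big( F^{-1}(p) - F^{-1}(s) \big)^{2}\,\dd s$, which is finite whenever $F \in \mathcal{F}^{-}_{2}$ (on a bounded interval $L^2$-integrability forces $L^1$-integrability). To establish formula~\eqref{normality-lower-var2}, I would centre the integrand and apply Fubini's theorem twice:
\begin{align*}
\Var\big( Y_{\zero}(p) \big)
&= \mathbb{E}\left( \bigg( \int_{-\infty}^{F^{-1}(p)} \big( \mathds{1}\{X \le x\} - F(x) \big)\,\dd x \bigg)^{2} \right)
\\
&= \int_{-\infty}^{F^{-1}(p)} \int_{-\infty}^{F^{-1}(p)} \mathbb{E}\Big( \big( \mathds{1}\{X \le x\} - F(x) \big)\big( \mathds{1}\{X \le y\} - F(y) \big) \Big)\,\dd x\,\dd y
\\
&= \int_{-\infty}^{F^{-1}(p)} \int_{-\infty}^{F^{-1}(p)} \big( F(x \wedge y) - F(x) F(y) \big)\,\dd x\,\dd y,
\end{align*}
where the last equality uses $\mathbb{E}\big( \mathds{1}\{X \le x\}\,\mathds{1}\{X \le y\} \big) = F(x \wedge y)$.

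This argument is essentially routine and parallel to that of Lemma~\ref{lemma-1upper}. The only mildly delicate points are justifying the Fubini interchanges -- for which the needed integrability is exactly what $F \in \mathcal{F}^{-}_{1}$ (respectively $\mathcal{F}^{-}_{2}$) supplies -- and checking that the pointwise identity $\big( F^{-1}(p) - F^{-1}(s) \big)^{+} = \big( F^{-1}(p) - F^{-1}(s) \big)\mathds{1}\{s \le p\}$ survives any jumps of $F^{-1}$, which follows from monotonicity of $F^{-1}$ alone. I do not anticipate any substantive obstacle beyond this bookkeeping.
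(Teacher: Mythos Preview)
Your proposal is correct and follows essentially the same route as the paper's own proof: both start from the distributional identity $Y_{\zero}(p)\stackrel{d}{=}\big(F^{-1}(p)-F^{-1}(U)\big)^{+}$, invoke Fubini to obtain~\eqref{lemma-lower-eq1}, unwind the positive part over $[0,p]$ to obtain~\eqref{lemma-lower-eq2}, and then square and apply Fubini again for the variance formula. Your write-up is slightly more explicit about the pointwise identity $\big(F^{-1}(p)-F^{-1}(s)\big)^{+}=\big(F^{-1}(p)-F^{-1}(s)\big)\mathds{1}\{s\le p\}$ and the Fubini justifications, but there is no substantive difference.
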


\begin{proof} 
We start with the equations 
\begin{align}
Y_{\zero}(p)  
&=\big( F^{-1}(p)-X \big)^{+} 
\label{lemma-lower-eq3a}
\\
&\stackrel{d}{=}  \big( F^{-1}(p)-F^{-1}(U) \big)^{+}. 
\label{lemma-lower-eq3b}
\end{align}
The first moment of the random variable on the right-hand side of equation~\eqref{lemma-lower-eq3b} is finite whenever $F\in \mathcal{F}^{-}_{1}$. We can now use Fubini's theorem to interchange the expectation and integration operations on the right-hand side of the equation 
\[
\mathbb{E}\big( Y_{\zero}(p) \big) = \mathbb{E} \bigg( \int_{-\infty }^{F^{-1}(p)} \mathds{1}\{X\le x\} \dd x \bigg) 
\]
and in this way arrive at equation~\eqref{lemma-lower-eq1} because 
$\mathbb{E}\big( \mathds{1}\{X\le x\} \big) =F(x)$.  
To prove equation~\eqref{lemma-lower-eq2}, we write 
\begin{align*}
\int_{-\infty }^{F^{-1}(p)}  F(x) \dd x 
&= \mathbb{E} \bigg( \int_{-\infty }^{F^{-1}(p)} \mathds{1}\{X\le x\} \dd x \bigg) 
\\
&= \mathbb{E} \Big( \big( F^{-1}(p) -  F^{-1}(U) \big)^{+}\Big)  
\\
&= \int_0^1 \big( F^{-1}(p) - F^{-1}(u) \big)^{+}\dd u 
\\
&= \int_0^p \big( F^{-1}(p) - F^{-1}(u) \big)\dd u 
\\
&= p F^{-1}(p) -\int_0^p F^{-1}(u)\dd u .  
\end{align*}

To prove the second half of the lemma, we first note that in view of equation~\eqref{lemma-lower-eq3b}, the random variable $Y_{\zero}(p)$ has a finite second moment whenever $F\in \mathcal{F}^{-}_{2}$.  To prove  equation~\eqref{normality-lower-var2}, we use 
Fubini's theorem and have 
\begin{align*} 
\Var\left( Y_{\zero}(p) \right) 
&= \Var\left( \int_{-\infty }^{F^{-1}(p)}  \mathds{1}\{X\le x\} \dd x  \right)  
\\
&= \mathbb{E} \left( \bigg(\int_{-\infty }^{F^{-1}(p)}  \big( \mathds{1}\{X \le x\}-F(x)\big) \dd x  \bigg)^2\right) 
\\
&= \int_{-\infty }^{F^{-1}(p)} \int_{-\infty }^{F^{-1}(p)}  \mathbb{E}\Big( \big( \mathds{1}\{X\le x\} - F(x)\big)\big( \mathds{1}\{X\le y\} - F(y)\big)\Big) \dd x  \dd y 
\\
&= 
\int_{-\infty }^{F^{-1}(p)} \int_{-\infty }^{F^{-1}(p)} 
 \big( F(x \wedge y) -F(x)F(y) \big) \dd x  \dd y. 
\end{align*}
This completes the proof of Lemma~\ref{lemma-1lower}. 
\end{proof}

\begin{note}\label{note-5}
The asymptotic variance $\sigma_{\half}^2(p_1,p_2)$ in statement~\eqref{normality-middle} is the variance of the random variable $Y_{\half}(p_1,p_2)$ defined by equation~\eqref{rv-middle}. Since both $F^{-1}(p_1)$ and $F^{-1}(p_2)$ are finite, the random variable $Y_{\half}(p_1,p_2)$ is bounded and thus has finite moments of all degrees irrespective of the cdf $F$. Note the equations  
\begin{align*}
Y_{\half}(p_1,p_2)
&=\int_{-\infty}^{F^{-1}(p_2)}  \mathds{1}\{X\le x\} \dd x   
- \int_{-\infty}^{F^{-1}(p_1)}  \mathds{1}\{X\le x\} \dd x   
\\
&= \big( F^{-1}(p_2)-X \big)^{+} - \big( F^{-1}(p_1)-X \big)^{+}. 
\end{align*}
This difference of the two positive parts, which may not have finite variances individually, always has a finite variance (as well as all the moments of higher degrees) irrespective of the cdf $F$, because $0<p_1<p_2<1$. Equation~\eqref{normality-middle-var2} follows immediately from  
\begin{multline*}
\Big( Y_{\half}(p_1,p_2)-\mathbb{E}\big( Y_{\half}(p_1,p_2) \big) \Big)^2
\\
= \int_{F^{-1}(p_1)}^{F^{-1}(p_2)} \int_{F^{-1}(p_1)}^{F^{-1}(p_2)}
\big( \mathds{1}\{X\le x\} - F(x)\big)\big( \mathds{1}\{X\le y\} - F(y)\big) \dd x \dd y 
\end{multline*}
and Fubini's theorem. This concludes Note~\ref{note-5}. 
\end{note}

\subsection{Proofs of Theorems~\ref{theorem-0upper}, \ref{theorem-0lower}, and~\ref{theorem-0middle}}
\label{appendix-2}

The three theorems deal with arbitrary cdf's $F$ and $G$ that, at most, are allowed to satisfy only moment conditions, with no further assumptions. It should be noted at the outset that we may find some portions of the following proofs look like simple consequences of integration-by-parts and change-of-variable  formulas of calculus, but this would be mathematically inaccurate because, strictly speaking, the quantile function  $F^{-1}$ is not an ordinary inverse of the cdf $F$, unless the latter is strictly increasing and continuous, which is not necessarily the case in the context of the present paper. Indeed, keeping in mind that our general results need to include, as a special case, the empirical cdf $F_{n,\srs}$, which is a step-wise function and thus has flat regions as well as jumps, good care  (recall \citet{w2023}) needs to be taken to maintain the largest possible class of cdf's. Hence, only moment-type assumptions are required, as specified in the formulation of theorems.

\begin{proof}[Proof of Theorem~\ref{theorem-0upper}]
By Lemma~\ref{lemma-1upper}, we have 
\begin{equation}\label{equation-0upper}
\int_p^1 F^{-1}(u)\dd u =(1-p) F^{-1}(p) + \int^{\infty }_{F^{-1}(p)}  \big( 1- F(x)\big) \dd x .   
\end{equation}
Since  the same equation also holds with $G$ instead of $F$, taking the difference between the two equations gives 
\begin{multline}
\int_p^1 \big( G^{-1}(u)-F^{-1}(u) \big) \dd u 
=(1-p) \big( G^{-1}(p)-F^{-1}(p) \big)
\\
+\int^{\infty }_{G^{-1}(p)}  \big( 1- G(x)\big) \dd x 
- \int^{\infty }_{F^{-1}(p)}  \big( 1- F(x)\big) \dd x . 
\label{qq-1upper}
\end{multline}
Next we express the penultimate integral on the right-hand side of equation~\eqref{qq-1upper} as follows:  
\begin{equation}\label{qq-2upper}
\int^{\infty }_{G^{-1}(p)}  \big( 1- G(x)\big) \dd x 
=  \int^{F^{-1}(p)}_{G^{-1}(p)}  \big( 1- G(x)\big) \dd x 
+ \int^{\infty }_{F^{-1}(p)}  \big( 1- G(x)\big) \dd x , 
\end{equation}
where the first integral on the right-hand side is equal to $-\int_{F^{-1}(p)}^{G^{-1}(p)}  \big( 1- G(x)\big) \dd x $ when $F^{-1}(p)<G^{-1}(p)$. Combining equations~\eqref{qq-1upper} and~\eqref{qq-2upper}, we arrive at equation~\eqref{eq-1upper} with the remainder term 
\[
\Rem(p;F,G)= (1-p) \big( F^{-1}(p)-G^{-1}(p) \big) 
- \int^{F^{-1}(p)}_{G^{-1}(p)}  \big( 1- G(x)\big) \dd x 
\]
that can be concisely written as the integral on the right-hand side of equation~\eqref{qq-3upper}. 

To show that $\Rem(p;F,G)$ is non-negative, we proceed as follows. If $F^{-1}(p) \ge G^{-1}(p)$, then $x\ge G^{-1}(p)$, which is equivalent to $G(x)\ge p$, and so 
\[
\Rem(p;F,G)= \int^{F^{-1}(p)}_{G^{-1}(p)}  \big( G(x)-p\big) \dd x \ge 0. 
\]
If $F^{-1}(p) < G^{-1}(p)$, then $x<G^{-1}(p)$, which is equivalent to $G(x)<p$, and so 
\[
\Rem(p;F,G)=-\int^{G^{-1}(p)}_{F^{-1}(p)}  \big( G(x)-p\big) \dd x \ge 0. 
\] 
Hence, we always have $\Rem(p;F,G)\ge 0$. 

Furthermore,
\begin{align*}
\Rem(p;F,G) &= \int^{F^{-1}(p)}_{G^{-1}(p)}  \big( G(x) - F(x)\big) \dd x  
+\int^{F^{-1}(p)}_{G^{-1}(p)}  \big( F(x) -p\big) \dd x  
\\
&= \int^{F^{-1}(p)}_{G^{-1}(p)}  \big( G(x) - F(x) \big) \dd x -\Rem(p;G,F) 
\\
&\le \int^{F^{-1}(p)}_{G^{-1}(p)}  \big( G(x) - F(x) \big) \dd x , 
\end{align*}
with the last inequality holding because $\Rem(p;G,F)\ge 0$. This establishes upper  bound~\eqref{r-prop-0upper}. Bound~\eqref{r-prop-1upper} follows immediately.
\end{proof}

\begin{proof}[Proof of Theorem~\ref{theorem-0lower}]
By Lemma~\ref{lemma-1lower}, we have the equation 
\begin{equation}\label{equation-0lower}
\int_0^p F^{-1}(u)\dd u=p F^{-1}(p)-\int_{-\infty }^{F^{-1}(p)}  F(x) \dd x .   
\end{equation}
Since  the same equation also holds with $G$ instead of $F$, taking the difference between the two equations gives 
\begin{equation}
\int_p^1 \big( G^{-1}(u)-F^{-1}(u) \big) \dd u 
=p \big( G^{-1}(p)-F^{-1}(p) \big)
-\int_{-\infty }^{G^{-1}(p)}  G(x) \dd x 
+ \int_{-\infty }^{F^{-1}(p)}  F(x) \dd x . 
\label{qq-1lower}
\end{equation}
Next we express the penultimate integral on the right-hand side of equation~\eqref{qq-1lower} as follows:  
\begin{equation}\label{qq-2lower}
\int_{-\infty }^{G^{-1}(p)}  G(x) \dd x 
=  \int_{-\infty }^{F^{-1}(p)}  G(x) \dd x 
+\int_{F^{-1}(p)}^{G^{-1}(p)}  G(x) \dd x , 
\end{equation}
where the second integral on the right-hand side is equal to $-\int^{F^{-1}(p)}_{G^{-1}(p)}  G(x) \dd x $ when $F^{-1}(p)>G^{-1}(p)$. Combining equations~\eqref{qq-1lower} and~\eqref{qq-2lower}, we arrive at equation~\eqref{eq-1lower} with the remainder term 
\begin{align*}
\Rem(p;F,G)
&= p \big( G^{-1}(p)-F^{-1}(p) \big) 
- \int_{F^{-1}(p)}^{G^{-1}(p)}  G(x) \dd x  
\\
&= -p \big( F^{-1}(p)-G^{-1}(p) \big) 
+ \int^{F^{-1}(p)}_{G^{-1}(p)}  G(x) \dd x , 
\end{align*}
that can be concisely written as the integral on the right-hand side of equation~\eqref{qq-3upper}. 
This completes the proof of Theorem~\ref{theorem-0lower}.
\end{proof}

\begin{proof}[Proof of Theorem~\ref{theorem-0middle}] 
We start with the equation 
\begin{equation}\label{equation-0middle}
\int_{p_1}^{p_2}  F^{-1}(u) \dd u  
=p_2 F^{-1}(p_2)- p_1 F^{-1}(p_1) - \int_{F^{-1}(p_1)}^{F^{-1}(p_2)}  F(x) \dd x ,  
\end{equation}
which can be verified in many different ways, and whose validity does not require any condition on the cdf $F$.  For example, we can prove equation~\eqref{equation-0middle} as follows.

Since $F^{-1}(p_1)$ and $F^{-1}(p_2)$ are finite, the integral $\int_{F^{-1}(p_1)}^{F^{-1}(p_2)}  F(x) \dd x $ is finite. With $X$ denoting a random variable whose cdf is $F$, and also noting that $X$ is equal in distribution to $F^{-1}(U)$, where $U$ is a uniform on the interval $[0,1]$ random variable, we have the  equations 
\begin{align*}
\int_{F^{-1}(p_1)}^{F^{-1}(p_2)}  F(x) \dd x 
&= \mathbb{E} \bigg( \int_{F^{-1}(p_1)}^{F^{-1}(p_2)} \mathds{1}\{X\le x\} \dd x \bigg) 
\notag 
\\
&= \mathbb{E} \Big( \big( F^{-1}(p_2) - F^{-1}(p_1) \vee X \big)^{+}\Big) 
\notag 
\\
&= \mathbb{E} \Big( \big( F^{-1}(p_2) - F^{-1}(p_1) \vee F^{-1}(U) \big)^{+}\Big) 
\notag 
\\
&= \int_0^1 \big( F^{-1}(p_2) - F^{-1}(p_1\vee u) \big)^{+}\dd u . 
\end{align*}
Splitting the integral $\int_0^1 $ on the right-hand side into the sum of $ \int_0^{p_1}$, $\int_{p_1}^{p_2}$ and $\int_{p_2}^{1}$, we obtain 
\begin{align*}
\int_{F^{-1}(p_1)}^{F^{-1}(p_2)}  F(x) \dd x 
&= \int_0^{p_1} \big( F^{-1}(p_2) - F^{-1}(p_1) \big)\dd u 
+\int_{p_1}^{p_2} \big( F^{-1}(p_2) - F^{-1}(u) \big)\dd u 
\notag 
\\
&= p_1 \big( F^{-1}(p_2) - F^{-1}(p_1) \big) +(p_2-p_1)F^{-1}(p_2) -\int_{p_1}^{p_2} F^{-1}(u)\dd u 
\notag 
\\
&= p_2 F^{-1}(p_2) -p_1 F^{-1}(p_1) -\int_{p_1}^{p_2} F^{-1}(u)\dd u .  
\end{align*}
Equation~\eqref{equation-0middle} follows. Of course, the same equation also holds with $G$ instead of $F$. Hence, 
\begin{multline*}
\int_{p_1}^{p_2}  \big( G^{-1}(u) - F^{-1}(u)\big) \dd u  
=p_2\big( G^{-1}(p_2)- F^{-1}(p_2) \big) -p_1 \big( G^{-1}(p_1)- F^{-1}(p_1) \big) 
\\
\quad -\bigg( \int_{G^{-1}(p_1)}^{G^{-1}(p_2)}  G(x) \dd x  -\int_{F^{-1}(p_1)}^{F^{-1}(p_2)}  F(x) \dd x \bigg),  
\end{multline*}
which can be rewritten as 
\begin{align*}
\int_{p_1}^{p_2}  \big( G^{-1}(u) - F^{-1}(u)\big) \dd u  
&= p_2\big( G^{-1}(p_2)- F^{-1}(p_2) \big) -p_1 \big( G^{-1}(p_1)- F^{-1}(p_1) \big) 
\\
&\quad -\int_{F^{-1}(p_1)}^{F^{-1}(p_2)}  \big( G(x) - F(x)\big) \dd x 
\\
&\qquad - \bigg( \int_{G^{-1}(p_1)}^{G^{-1}(p_2)}  G(x)\dd x 
-\int_{F^{-1}(p_1)}^{F^{-1}(p_2)} G(x) \dd x \bigg)  . 
\end{align*}
Since 
\[
 \int_{G^{-1}(p_1)}^{G^{-1}(p_2)}  G(x)\dd x 
-\int_{F^{-1}(p_1)}^{F^{-1}(p_2)} G(x) \dd x  
=\int_{G^{-1}(p_1)}^{F^{-1}(p_1)}  G(x)\dd x 
-\int_{G^{-1}(p_2)}^{F^{-1}(p_2)} G(x) \dd x , 
\]
we have 
\begin{align*}
\int_{p_1}^{p_2}  \big( G^{-1}(u) - F^{-1}(u)\big) \dd u  
&= p_2\big( G^{-1}(p_2)- F^{-1}(p_2) \big) -p_1 \big( G^{-1}(p_1)- F^{-1}(p_1) \big) 
\notag 
\\
&\quad -\int_{F^{-1}(p_1)}^{F^{-1}(p_2)}  \big( G(x) - F(x)\big) \dd x 
\notag 
\\
&\qquad - \int_{G^{-1}(p_1)}^{F^{-1}(p_1)}  G(x)\dd x 
+\int_{G^{-1}(p_2)}^{F^{-1}(p_2)} G(x) \dd x  .   
\end{align*}
Combining the terms on the right-hand side of the latter equation, we obtain 
\begin{multline*}
\int_{p_1}^{p_2}  \big( G^{-1}(u) - F^{-1}(u)\big) \dd u  
=-\int_{F^{-1}(p_1)}^{F^{-1}(p_2)}  \big( G(x) - F(x)\big) \dd x 
\\
+ \int_{G^{-1}(p_1)}^{F^{-1}(p_1)} \big(p_1- G(x) \big) \dd x
-  \int_{G^{-1}(p_2)}^{F^{-1}(p_2)} \big( p_2-G(x)\big) \dd x .    
\end{multline*}
This establishes equation~\eqref{eq-1middle} and completes the proof of Theorem~\ref{theorem-0middle}.  
\end{proof}

\subsection{Asymptotic results}
\label{subsub-asresults}

\begin{lemma}\label{LC-clt}
Let $F\in \mathcal{F}^{\mu\neq 0}_{2}$, and let the quantile function $F^{-1}$ be continuous at the point $p\in (0,1)$.  Then 
\begin{equation}\label{eq-LC-1a}
\sqrt{n}\left(\LC_{n,\srs}(p) - \LC(p)  \right) 
= -{1\over \sqrt{n}} \sum_{i=1}^n  Y_{i,\LC}(p) + o_{\mathbb{P}}(1), 
\end{equation} 
where 
\[
Y_{i,\LC}(p)
= {1\over \mu } \int_{-\infty }^{F^{-1}(p)} \big( \mathds{1}\{X_i\le x\} -F(x)\big) \dd x  
+{\LC(p) \over \mu } \big( X_i-\mu \big) . 
\]
\end{lemma}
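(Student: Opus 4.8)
The plan is to decompose the centered empirical Lorenz curve into two pieces --- one coming from the numerator (the lower-layer integral) and one from the denominator (the sample mean) --- and then linearize each piece via the results already established. Write $\LC_{n,\srs}(p) - \LC(p) = \mu_{n,\srs}^{-1}\int_0^p F_{n,\srs}^{-1}(u)\dd u - \mu^{-1}\int_0^p F^{-1}(u)\dd u$. Adding and subtracting $\mu^{-1}\int_0^p F_{n,\srs}^{-1}(u)\dd u$ gives
\begin{equation*}
\LC_{n,\srs}(p) - \LC(p)
= \frac{1}{\mu}\left(\int_0^p F_{n,\srs}^{-1}(u)\dd u - \int_0^p F^{-1}(u)\dd u\right)
+ \left(\frac{1}{\mu_{n,\srs}} - \frac{1}{\mu}\right)\int_0^p F_{n,\srs}^{-1}(u)\dd u .
\end{equation*}

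First I would handle the first term using Corollary~\ref{corollary-3lower} (with $F_n = F_{n,\srs}$, $A_n=\sqrt n$): since $F\in\mathcal{F}^{\mu\neq 0}_2 \subseteq \mathcal{F}^{-}_2$ and $F^{-1}$ is continuous at $p$, statement~\eqref{eq-1da} applies, so $\sqrt n$ times this term equals $-\frac{1}{\mu}\frac{1}{\sqrt n}\sum_{i=1}^n\big( (F^{-1}(p)-X_i)^{+} - \mathbb{E}(F^{-1}(p)-X)^{+}\big) + o_{\mathbb{P}}(1)$, which by $(F^{-1}(p)-X)^{+} = \int_{-\infty}^{F^{-1}(p)}\mathds{1}\{X\le x\}\dd x$ and Lemma~\ref{lemma-1lower} is exactly $-\frac{1}{\mu\sqrt n}\sum_{i=1}^n\int_{-\infty}^{F^{-1}(p)}(\mathds{1}\{X_i\le x\}-F(x))\dd x$. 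For the second term, I would write $\frac{1}{\mu_{n,\srs}}-\frac{1}{\mu} = -\frac{\mu_{n,\srs}-\mu}{\mu\,\mu_{n,\srs}}$ and use that $\mu_{n,\srs}\stackrel{\mathbb P}{\to}\mu\neq 0$ (law of large numbers, valid since $F\in\mathcal F_1$) together with $\int_0^p F_{n,\srs}^{-1}(u)\dd u \stackrel{\mathbb P}{\to}\int_0^p F^{-1}(u)\dd u = \mu\,\LC(p)$ (Example~\ref{example-lower1}). Hence $\sqrt n$ times the second term equals $-\frac{\sqrt n(\mu_{n,\srs}-\mu)}{\mu^2}\LC(p) + o_{\mathbb P}(1)$, and since $\mu_{n,\srs}-\mu = \frac{1}{n}\sum_{i=1}^n(X_i-\mu)$, this is $-\frac{\LC(p)}{\mu^2}\frac{1}{\sqrt n}\sum_{i=1}^n(X_i-\mu) = -\frac{1}{\sqrt n}\sum_{i=1}^n \frac{\LC(p)}{\mu}\cdot\frac{X_i-\mu}{\mu}$; combining with the first term yields exactly $-\frac{1}{\sqrt n}\sum_{i=1}^n Y_{i,\LC}(p)$, which is~\eqref{eq-LC-1a}.

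The main obstacle, and the step requiring the most care, is the $o_{\mathbb P}(1)$ bookkeeping in the second term: I need Slutsky-type arguments to replace $\mu_{n,\srs}$ by $\mu$ in the denominator and to replace $\int_0^p F_{n,\srs}^{-1}(u)\dd u$ by its limit $\mu\,\LC(p)$, and I must confirm that the error terms generated are genuinely $o_{\mathbb P}(1)$ after multiplication by $\sqrt n$. This works because $\sqrt n(\mu_{n,\srs}-\mu) = O_{\mathbb P}(1)$ under the finite second moment assumption $F\in\mathcal F^{\mu\neq 0}_2$, while the factors being replaced converge in probability to constants, so each replacement costs only $o_{\mathbb P}(1)\cdot O_{\mathbb P}(1) = o_{\mathbb P}(1)$. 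One should also note that the representation of $Y_{i,\LC}(p)$ as a mean-zero variable requires $\mathbb{E}\big(\int_{-\infty}^{F^{-1}(p)}(\mathds{1}\{X\le x\}-F(x))\dd x\big)=0$, which is immediate from Lemma~\ref{lemma-1lower}, and finiteness of its second moment, which follows from $F\in\mathcal F^{-}_2$ (for the integral part, again Lemma~\ref{lemma-1lower}) and $F\in\mathcal F^{+}_2\cap\mathcal F^{-}_2$ (for the $X_i-\mu$ part); the cross term is controlled by Cauchy--Schwarz. Assembling these pieces gives the claimed asymptotic linear expansion.
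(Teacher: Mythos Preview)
Your approach is correct and essentially the same as the paper's: both linearize the ratio $\theta_n/\mu_n - \theta/\mu$ (the paper via the explicit algebraic identity~\eqref{eq-LC-2} with quadratic remainder, you via add-and-subtract followed by Slutsky) and then invoke equation~\eqref{eq-1da} for the lower-layer piece. There is, however, an arithmetic slip in your second term: replacing $\mu_{n,\srs}\to\mu$ and $\int_0^p F_{n,\srs}^{-1}(u)\dd u\to\mu\,\LC(p)$ in $-\dfrac{\mu_{n,\srs}-\mu}{\mu\,\mu_{n,\srs}}\int_0^p F_{n,\srs}^{-1}(u)\dd u$ yields $-\dfrac{\mu_{n,\srs}-\mu}{\mu^2}\cdot\mu\,\LC(p) = -\dfrac{\LC(p)}{\mu}(\mu_{n,\srs}-\mu)$, not $-\dfrac{\LC(p)}{\mu^2}(\mu_{n,\srs}-\mu)$; with this correction the summand is $\dfrac{\LC(p)}{\mu}(X_i-\mu)$, which matches $Y_{i,\LC}(p)$ exactly.
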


\begin{proof} 
We start with the generic equation 
\begin{equation} \label{eq-LC-2}
{\theta_n\over \mu_n} -{\theta \over \mu } 
= \underbrace{{1\over \mu}(\theta_n-\theta)-{\theta \over \mu^2}(\mu_n-\mu)}_{\textrm{main term}} 
+ \underbrace{{\theta \over \mu^2 \mu_n}(\mu_n-\mu)^2
-{1\over \mu\mu_n}(\theta_n-\theta)(\mu_n-\mu)}_{\textrm{remainder term}},  
\end{equation}
which we next use with the following quantities: 
\begin{itemize} 
\item 
$\mu $ is the mean of $F$, 
\item 
$\theta $ is the lower-layer integral defined by equation~\eqref{int-lower}, 
\item 
$\mu_n$ is the arithmetic mean $\mu_{n,\srs}$  given by equation~\eqref{mean-1}, 
\item 
$\theta_n$ is the estimator $\theta_{n,\srs}=\int_0^p F_{n,\srs}^{-1}(u)\dd u $ of $\theta$. 
\end{itemize}
By the classical central limit theorem, we have 
\[
\sqrt{n}(\mu_{n,\srs}-\mu)=O_{\mathbb{P}}(1). 
\]
Statement~\eqref{ill-FTVaR-3} implies 
\[
\sqrt{n}(\theta_{n,\srs}-\theta)=O_{\mathbb{P}}(1). 
\]
Consequently, the remainder term on the right-hand side of equation~\eqref{eq-LC-2} is of the order $ o_{\mathbb{P}}(1/\sqrt{n})$. Hence, 
\begin{align}
\sqrt{n}\left(\LC_{n,\srs}(p) - \LC(p)  \right) 
&=\sqrt{n}\left({\theta_{n,\srs}\over \mu_{n,\srs}} -{\theta \over \mu }\right)
\notag 
\\
&= {1\over \mu}\sqrt{n}(\theta_{n,\srs}-\theta)
-{\theta \over \mu^2}\sqrt{n}(\mu_{n,\srs}-\mu)  + o_{\mathbb{P}}(1) .  
\label{eq-LC-6}
\end{align} 
By equation~\eqref{eq-1da} with $F_{n,\srs}$ instead of $F_n$, we have 
\begin{equation}\label{eq-LC-7}
\sqrt{n}(\theta_{n,\srs}-\theta)
=\sqrt{n}\,\int_{-\infty }^{F^{-1}(p)}  \big( F(x)- F_{n,\srs}(x)\big) \dd x +o_{\mathbb{P}}(1). 
\end{equation}
Combining equations~\eqref{eq-LC-6} and~\eqref{eq-LC-7}, we obtain 
\[
\sqrt{n}\left(\LC_{n,\srs}(p) - \LC(p)  \right) 
= -{1\over \mu}\sqrt{n}\,\int_{-\infty }^{F^{-1}(p)}  \big( F_{n,\srs}(x)-F(x)\big) \dd x 
-{\theta \over \mu^2}\sqrt{n}(\mu_{n,\srs}-\mu)  + o_{\mathbb{P}}(1), 
\]
which implies statement~\eqref{eq-LC-1a} and concludes the proof of Lemma~\ref{LC-clt}. 
\end{proof}

\begin{lemma}\label{GC-clt}
Let $F\in \mathcal{F}^{\mu\neq 0}_{2}$, and let the quantile function $F^{-1}$ be continuous at the points $p\in (0,1)$ and $1-p$. Then 
\[
\sqrt{n}\left(\GC_{n,\srs}(p) - \GC(p)  \right) 
= {1\over \sqrt{n}} \sum_{i=1}^n  \Big( Y_{i,\LC}(1-p)+ Y_{i,\LC}(p)\Big) + o_{\mathbb{P}}(1), 
\]
where
\[
Y_{i,\LC}(u)
= {1\over \mu } \int_{-\infty }^{F^{-1}(u)} \big( \mathds{1}\{X_i\le x\} -F(x)\big) \dd x  
+{\LC(u) \over \mu } \big( X_i-\mu \big) . 
\]
\end{lemma}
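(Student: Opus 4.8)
The plan is to reduce the Gini curve to a linear combination of two Lorenz curves and then invoke Lemma~\ref{LC-clt} twice, once at the level $p$ and once at the level $1-p$.

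First I would record the elementary identity. Since $F\in\mathcal{F}^{\mu\neq 0}_{2}\subset\mathcal{F}_{1}$, the quantile function $F^{-1}$ is integrable on $(0,1)$, $\int_0^1 F^{-1}(u)\dd u=\mu$, and by additivity of the Lebesgue integral $\int_{1-p}^1 F^{-1}(u)\dd u=\mu-\int_0^{1-p}F^{-1}(u)\dd u$. Substituting this into the definition~\eqref{ill-Gini-0} of the Gini curve gives
\[
\GC(p)=1-\LC(1-p)-\LC(p).
\]
The same manipulation applied with $F_{n,\srs}$ in place of $F$ — whose quantile function is a bounded step function, hence integrable, and for which $\int_0^1 F_{n,\srs}^{-1}(u)\dd u=\mu_{n,\srs}$ — yields $\GC_{n,\srs}(p)=1-\LC_{n,\srs}(1-p)-\LC_{n,\srs}(p)$, where the two empirical Lorenz curves share the common normalizing constant $\mu_{n,\srs}$. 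Subtracting, I obtain
\[
\sqrt{n}\big(\GC_{n,\srs}(p)-\GC(p)\big)
=-\sqrt{n}\big(\LC_{n,\srs}(1-p)-\LC(1-p)\big)-\sqrt{n}\big(\LC_{n,\srs}(p)-\LC(p)\big).
\]

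Next I would apply Lemma~\ref{LC-clt}. Its hypotheses hold at the level $p$ (given) and at the level $1-p$, because $F\in\mathcal{F}^{\mu\neq 0}_{2}$ and $F^{-1}$ is assumed continuous at $1-p$. Hence each right-hand term admits the linearization $\sqrt{n}\big(\LC_{n,\srs}(q)-\LC(q)\big)=-n^{-1/2}\sum_{i=1}^n Y_{i,\LC}(q)+o_{\mathbb{P}}(1)$ for $q\in\{p,1-p\}$. Plugging these in, the two minus signs cancel and the two $o_{\mathbb{P}}(1)$ remainders combine into one, giving
\[
\sqrt{n}\big(\GC_{n,\srs}(p)-\GC(p)\big)
=\frac{1}{\sqrt{n}}\sum_{i=1}^n\big(Y_{i,\LC}(1-p)+Y_{i,\LC}(p)\big)+o_{\mathbb{P}}(1),
\]
which is the asserted expansion. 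For the companion asymptotic normality statement~\eqref{ill-GC-3}, one then notes that the summands $Y_{\gc}(p):=Y_{\LC}(1-p)+Y_{\LC}(p)$ are i.i.d., mean-zero (each $Y_{\LC}$ is), and square-integrable because $F\in\mathcal{F}^{\mu\neq 0}_{2}$, so the classical CLT yields a centered normal limit with variance $\sigma_{\gc}^2(p)=\mathbb{E}(Y_{\gc}(p)^2)$; collecting the two $(X-\mu)$-terms and using $\LC(1-p)+\LC(p)=1-\GC(p)$ reproduces the displayed form~\eqref{ill-GC-4a} of $Y_{\gc}(p)$.

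There is essentially no hard step: the whole argument rests on the identity $\GC=1-\LC(1-\cdot)-\LC(\cdot)$ together with a double application of Lemma~\ref{LC-clt}. The only points demanding a little care are the bookkeeping of integrability when passing to the empirical Gini curve, verifying that both empirical Lorenz curves carry the \emph{same} random denominator $\mu_{n,\srs}$ (so that the subtraction is genuinely linear), and keeping the sign conventions straight so that the two leading linear terms add rather than cancel.
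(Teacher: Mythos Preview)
Your proposal is correct and follows essentially the same route as the paper: it rewrites $\GC(p)=1-\LC(1-p)-\LC(p)$ (and the empirical analogue), subtracts, and applies Lemma~\ref{LC-clt} at the levels $p$ and $1-p$. Your write-up is in fact a bit more explicit than the paper's on the bookkeeping points (shared denominator $\mu_{n,\srs}$, integrability, sign tracking) and on deducing~\eqref{ill-GC-3} and~\eqref{ill-GC-4a}.
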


\begin{proof} 
Note that 
\begin{align*}
\GC(p) 
&=1- {1\over \mu } 
\left( \int_{0}^{1-p} F^{-1}(u)\dd u + \int_0^p F^{-1}(u)\dd u \right) . 
\\
&= 1-\LC(1-p)-\LC(p). 
\end{align*}
Hence, 
\[
\sqrt{n}\left(\GC_{n,\srs}(p) - \GC(p)  \right) 
= - \sqrt{n}\left(\LC_{n,\srs}(1-p) - \LC(1-p)  \right)
-\sqrt{n}\left(\LC_{n,\srs}(p) - \LC(p)  \right),  
\] 
and with the help of Lemma~\ref{LC-clt}, we obtain 
\[
\sqrt{n}\left(\GC_{n,\srs}(p) - \GC(p)  \right) 
= {1\over \sqrt{n}} \sum_{i=1}^n  \Big( Y_{i,\LC}(1-p)+ Y_{i,\LC}(p)\Big) + o_{\mathbb{P}}(1),   
\]
which concludes the proof of Lemma~\ref{GC-clt}. 
\end{proof}

\subsection{Proof of Theorem~\ref{fund-lemma-abc}}
\label{L-theorem}

We begin the proof by splitting the integral $L_{w,a,b}(F)$ into two parts according to whether $w(u)<0$ or $w(u)\ge 0$. This gives us the equation  
\begin{equation}\label{fund-lemma-eq2new}
\int_a^b F^{-1}(u) w(u) \dd u 
=  \int_a^b F^{-1}(u) \mathds{1}\{w(u)<0\} w(u) \dd u 
+\int_a^b F^{-1}(u) \mathds{1}\{w(u)\ge 0\} w(u) \dd u  . 
\end{equation}
We shall next analyze the two integrals on the right-hand side separately, showing that they are equal to the respective integrals on the right-hand side of equation~\eqref{fund-lemma-eq1abcabc}.

Pertaining to the first integral on the right-hand side of equation~\eqref{fund-lemma-eq2new}, since $w$ is non-decreasing and right-continuous, we have  $w(u)< 0$ 
if and only if  $u< w^{-1}(0)$, and so 
\[
\int_a^b F^{-1}(u) \mathds{1}\{w(u)<0\} w(u) \dd u 
=\int_a^b F^{-1}(u) \mathds{1}\{u< w^{-1}(0)\} w(u) \dd u .   
\]
Furthermore, since $w(u)< 0$, we have  
\begin{align*} 
w(u)
&= - \int_{w(u)}^{0} \dd x 
\\
&= - \int_{-\infty}^{0}  \mathds{1}\{u<w^{-1}(x)\} \dd x ,   
\end{align*} 
where the last equation holds because we have $w(u)<x$ if and only if $u<w^{-1}(x)$, due to our assumption that  $w$ is non-decreasing and right-continuous. 
Consequently, 
\begin{align}
\int_a^b F^{-1}(u) \mathds{1}\{w(u)<0\} w(u) \dd u 
&=   -\int_{-\infty}^{0}\Bigg( \int_a^b F^{-1}(u) \mathds{1}\{u<w^{-1}(0) \wedge w^{-1}(x)\} \dd u \Bigg) \dd x  
\notag 
\\
&=  -\int_{-\infty}^{0}\Bigg(  \int_a^b F^{-1}(u) \mathds{1}\{u<w^{-1}(x)\} \dd u \Bigg) \dd x  ,  
\label{first-2}
\end{align}
where right-most equation holds because $w^{-1}$ is non-decreasing. 
The double integral on the right-hand side of equation~\eqref{first-2} is zero when $w^{-1}(x)\le a$. Hence, continuing with equation~\eqref{first-2} for only those $x$ that satisfy $a< w^{-1}(x)$,  we have  
\begin{align}
\int_a^b F^{-1}(u) \mathds{1}\{w(u)<0\} w(u) \dd u 
&= -\int_{-\infty}^{0}\mathds{1}\{a< w^{-1}(x)\}
\Bigg( \int_a^b F^{-1}(u) \mathds{1}\{u< w^{-1}(x)\} \dd u \Bigg) \dd x
\notag 
\\ 
&= -\int_{-\infty}^0\mathds{1}\{a< w^{-1}(x)\} \Bigg( \int_{a}^{b\wedge w^{-1}(x)}F^{-1}(u) \dd u \Bigg) \dd x .   
\label{abc-1}
\end{align}
Since $w$ is non-decreasing and right-continuous, we have that $a<w^{-1}(x)$ is equivalent to $w(a)<x$, and so equation~\eqref{abc-1} gives the first integral on the right-hand side of equation~\eqref{fund-lemma-eq1abcabc}.

Analogously we tackle the second integral on the right-hand side of equation~\eqref{fund-lemma-eq2new}. First, since  $w(u)\ge 0$ if and only if  $u\ge w^{-1}(0)$, we have 
\[
\int_a^b F^{-1}(u) \mathds{1}\{w(u)\ge 0\} w(u) \dd u 
=\int_a^b F^{-1}(u) \mathds{1}\{u\ge w^{-1}(0)\} w(u) \dd u . 
\]
Furthermore, since $w(u)\ge 0$, we have   
\begin{align*} 
w(u)
&= \int_{0}^{w(u)} \dd x 
\\
&= \int_{0}^{\infty} \mathds{1}\{u \ge w^{-1}(x)\} \dd x , 
\end{align*} 
where the last equation holds because $w(u)\ge x$ if and only if $u\ge w^{-1}(x) $. 
Consequently, 
\begin{align}
\int_a^b F^{-1}(u) \mathds{1}\{w(u)\ge 0\} w(u) \dd u 
&=  \int_{0}^{\infty}\Bigg( \int_a^b F^{-1}(u) \mathds{1}\{u\ge w^{-1}(0) \vee w^{-1}(x)\} \dd u \Bigg) \dd x 
\notag 
\\
&=   \int_{0}^{\infty}\Bigg( \int_a^b F^{-1}(u) \mathds{1}\{u\ge w^{-1}(x)\} \dd u \Bigg) \dd x , 
\label{first-3}
\end{align}
where right-most equation holds because $w^{-1}$ is non-decreasing. 
The double integral on the right-hand side of equation~\eqref{first-3} is zero when $w^{-1}(x)>b$. Hence, continuing with equation~\eqref{first-3} for only those $x$ that satisfy $w^{-1}(x) \le b$, we have 
\begin{align}
\int_a^b F^{-1}(u) \mathds{1}\{w(u)\ge 0\} w(u) \dd u 
&=\int_{0}^{\infty}\mathds{1}\{w^{-1}(x) \le b \} 
\Bigg( \int_a^b F^{-1}(u) \mathds{1}\{u\ge w^{-1}(x)\} \dd u \Bigg) \dd x 
\notag 
\\ 
&=\int_{0}^{\infty}\mathds{1}\{w^{-1}(x) \le b \} 
\Bigg( \int_{a \vee w^{-1}(x)}^b F^{-1}(u)\dd u \Bigg) \dd x . 
\label{abc-2}
\end{align}
Since $w$ is non-decreasing and right-continuous, we have that $w^{-1}(x)\le b$ is equivalent to $x\le w(b)$, and so equation~\eqref{abc-2} gives the second integral on the right-hand side of equation~\eqref{fund-lemma-eq1abcabc}. This concludes the proof of Theorem~\ref{fund-lemma-abc}.

\subsection{Proof of Theorem~\ref{min-k}.}
\label{min-w}

Let condition~\ref{cond-w-diff} be satisfied with some $K\ge 3$. Equation~\eqref{ccc-1} is equivalent to 
\begin{align*}
w(u)
&=\mathds{1}\{0\le t < a_1\}\Big( w_{11}(u) - w_{12}(u) \Big)
\notag 
\\
&\qquad + \sum_{k=2}^{K-1} \mathds{1}\{a_{k-1}\le t < a_k\}\Bigg( \bigg\{w_{k1}(u)+\sum_{i=2}^{k}c_i \bigg\} 
-  \bigg\{w_{k2}(u)+\sum_{i=2}^{k}c_i \bigg\}  \Bigg)   
\notag 
\\
&\qquad +\mathds{1}\{a_{K-1}\le t < 1\}\Big( w_{K1}(u) - w_{K2}(u) \Big) , 
\end{align*}
where the constants $c_2,\dots , c_{K-1}$ can be any real numbers, but we choose them so that the inequalities 
\[
w_{k1}(a_{k-1})+\sum_{i=2}^{k}c_i \ge w_{(k-1)1}(a_{k-1}-)+\sum_{i=2}^{k-1}c_i 
\qquad \Big ( \Longleftrightarrow w_{k1}(a_{k-1})+c_k \ge w_{(k-1)1}(a_{k-1}-)  \Big) 
\]
and  
\[
w_{k2}(a_{k-1})+\sum_{i=2}^{k}c_i \ge w_{(k-1)2}(a_{k-1}-)+\sum_{i=2}^{k-1}c_i  
\qquad \Big ( \Longleftrightarrow w_{k2}(a_{k-1})+c_k \ge w_{(k-1)2}(a_{k-1}-)  \Big)  
\]
would hold for every $k=2,\dots , K-1$. The functions 
\[
w_{11}^*(u)=\mathds{1}\{0\le u < a_1\}w_{11}(u) 
+ \sum_{k=2}^{K-1} \mathds{1}\{a_{k-1}\le u < a_k\}\bigg\{w_{k1}(u)+\sum_{i=2}^{k}c_i \bigg\}  
\]
and 
\[
w_{12}^*(u)=\mathds{1}\{0\le u < a_1\}w_{12}(u) 
+ \sum_{k=2}^{K-1} \mathds{1}\{a_{k-1}\le u < a_k\}\bigg\{w_{k2}(u)+\sum_{i=2}^{k}c_i \bigg\}  
\]
are non-decreasing and right-continuous on the interval $[0,a_{K-1})$.  
Furthermore, the functions 
\[
w_{21}^*(u)=w_{K1}(u) 
\quad \textrm{and} \quad 
w_{22}^*(u)=w_{K2}(u) 
\]
are non-decreasing and right-continuous on the interval $[a_{K-1},1)$. We obviously have the equation 
\[
w(u)
=\mathds{1}\{0 \le u < a_{K-1}\}\Big( w_{11}^*(u) - w_{12}^*(u) \Big) 
+ \mathds{1}\{a_{K-1}\le u < 1\}\Big( w_{21}^*(u) - w_{22}^*(u) \Big) , 
\]
which establishes representation~\eqref{ccc-abc} and completes the proof of Theorem~\ref{min-k}.

\subsection{Proofs of cursory notes}
\label{proof-a1}

In this appendix we present proofs of some of the cursory notes that we have made while discussing main results.

\subsubsection{Proof of equation~\eqref{eq-0}} 
\label{proof-eq-0}
 
Let $U$ denote a uniform on $[0,1]$ random variable. Since $X$ and $Y$ have the same distributions as $F^{-1}(U)$ and $G^{-1}(U)$, respectively, we therefore have 
\begin{align}
\int_0^1 \big( G^{-1}(u)-F^{-1}(u) \big) \dd u 
&= \mathbb{E}\big( G^{-1}(U) \big) - \mathbb{E}\big( F^{-1}(U) \big)
\notag
\\
&=\mathbb{E}(Y)-\mathbb{E}(X). 
\label{eq-00}
\end{align}
Furthermore, using Fubini's theorem, we have 
\begin{align}
\int^{\infty }_{-\infty}  \big( F(x)- G(x)\big) \dd x 
&= \mathbb{E}\left(\int^{\infty }_{-\infty}  \big( \mathds{1}\{X\le x\}- \mathds{1}\{Y\le x\}\big) \dd x \right )
\notag
\\
&= \mathbb{E}\left(\int^{\infty }_{-\infty}  \big( \mathds{1}\{Y> x\}- \mathds{1}\{X> x\}\big) \dd x \right )
\notag
\\
&=\mathbb{E}(Y-X),  
\label{eq-01}
\end{align}
where the right-most equation holds because the integral inside the expectation is equal to $Y-X$. Since the right-hand sides of equations~\eqref{eq-00} and~\eqref{eq-01} are equal, we have equation~\eqref{eq-0}.

\subsubsection{On the validity of statements~\eqref{h-fail-1} and~\eqref{z-conv-32}-\eqref{z-conv-12}}
\label{app-z-conv}

Using the definition of the empirical quantile function $H_{n,\textrm{SRS}}^{-1}$, statement~\eqref{h-fail-1} is equivalent to
\[
Z_{\lceil n/2 \rceil : n} \stackrel{\mathbb{P}}{\nrightarrow} \frac{1}{2}.
\]
To prove it, we want to find $\epsilon >0$ such that 
\begin{equation}\label{abc-10}
\lim_{n \to \infty}\mathbb{P} \left(\left| Z_{\lceil n/2 \rceil : n} - \frac{1}{2} \right| > \epsilon\right) > 0. 
\end{equation}
We set $\epsilon = 1/2$ and note that statement~\eqref{abc-10} follows if we establish at least one of statements~\eqref{z-conv-32} and~\eqref{z-conv-12}, but for completeness of the argument, we shall next establish both of them.  Using a well-known formula \citep[e.g.,][eq.~(2.1.3)]{DN2003} for the cdf of order statistics, we have
\begin{align} 
\mathbb{P} \left(Z_{\lceil n/2 \rceil : n} > \frac{3}{2}\right)
&= 1 - \sum_{j = \lceil n/2 \rceil}^{n} \mathbb{P} \left(Z_{j : n} \leq \frac{3}{2}\right) 
\notag 
\\
&= 1 - \sum_{j = \lceil n/2 \rceil}^{n} \binom{n}{j} \left(F_{Z}\left(\frac{3}{2}\right)\right)^{j} \left(1 - F_{Z}\left(\frac{3}{2}\right)\right)^{n-j} 
\notag 
\\
&= 1 - \sum_{j = \lceil n/2 \rceil}^{n} \binom{n}{j} \left(\frac{1}{2}\right)^{j} \left(\frac{1}{2}\right)^{n-j} 
\notag 
\\
&= 1 - \left(\frac{1}{2}\right)^{n} \sum_{j = \lceil n/2 \rceil}^{n} \binom{n}{j}. 
\label{equlaity-1}
\end{align}
Similarly, 
\begin{align} 
\mathbb{P} \left(Z_{\lceil n/2 \rceil : n} < \frac{1}{2}\right)
&= \sum_{j = \lceil n/2 \rceil}^{n} \mathbb{P} \left(Z_{j : n} < \frac{1}{2}\right) 
\notag 
\\
&= \sum_{j = \lceil n/2 \rceil}^{n} \binom{n}{j} \left(F_{Z}\left(\frac{1}{2}\right)\right)^{j} \left(1 - F_{Z}\left(\frac{1}{2}\right)\right)^{n-j} 
\notag 
\\
&= \sum_{j = \lceil n/2 \rceil}^{n} \binom{n}{j} \left(\frac{1}{2}\right)^{j} \left(\frac{1}{2}\right)^{n-j} 
\notag 
\\
&= \left(\frac{1}{2}\right)^{n} \sum_{j = \lceil n/2 \rceil}^{n} \binom{n}{j}.
\label{equlaity-2}
\end{align}
We next rearrange the sums of binomials on the right-hand sides of equations~\eqref{equlaity-1} and~\eqref{equlaity-2}. When $n$ is even, $\lceil n/2 \rceil = n/2$ and $2^{n}$ can be expressed in $n+1$ (which is odd) terms
\begin{align*} 
2^{n} &= \binom{n}{0} + \binom{n}{1} + \cdots + \binom{n}{\frac{n}{2} - 1} + \binom{n}{\frac{n}{2}} + \binom{n}{\frac{n}{2} + 1} + \cdots + \binom{n}{n-1} + \binom{n}{n} \\
&= \binom{n}{\frac{n}{2}} + 2  \left( \binom{n}{\frac{n}{2} + 1} + \cdots + \binom{n}{n-1} + \binom{n}{n} \right). 
\end{align*}
Hence, we have 
\begin{align*} 
\sum_{j = \lceil n/2 \rceil}^{n} \binom{n}{j} &= \binom{n}{\lceil \frac{n}{2} \rceil} + \binom{n}{\lceil \frac{n}{2} \rceil + 1} + \cdots + \binom{n}{n-1} + \binom{n}{n} \\
&= \binom{n}{\frac{n}{2}} + \binom{n}{\frac{n}{2} + 1} + \cdots + \binom{n}{n-1} + \binom{n}{n} \\
&= {\frac{1}{2}}  \left( 2^{n} + \binom{n}{\frac{n}{2}} \right) \\
&= 2^{n-1} + {\frac{1}{2}}  \binom{n}{\frac{n}{2}}.
\end{align*}
When $n$ is odd, $2^{n}$ can be expressed in $n+1$ (which is even) terms as follows: 
\begin{align*} 
2^{n} &= \binom{n}{0} + \binom{n}{1} + \cdots + \binom{n}{\lceil \frac{n}{2} \rceil - 1} + \binom{n}{\lceil \frac{n}{2} \rceil} + \cdots + \binom{n}{n-1} + \binom{n}{n} \\
&= 2  \left( \binom{n}{\lceil \frac{n}{2} \rceil} + \cdots + \binom{n}{n-1} + \binom{n}{n} \right). 
\end{align*}
Hence, we have
\begin{align*}
\sum_{j = \lceil n/2 \rceil}^{n} \binom{n}{j} &= \binom{n}{\lceil \frac{n}{2} \rceil} + \binom{n}{\lceil \frac{n}{2} \rceil + 1} + \cdots + \binom{n}{n-1} + \binom{n}{n} \\
&= {\frac{1}{2}}  2^{n} \\
&= 2^{n-1}.
\end{align*}
Going back to the original expressions~\eqref{equlaity-1} and~\eqref{equlaity-2} of probabilities, we have
\begin{align*}
\mathbb{P} \left(Z_{\lceil n/2 \rceil : n} > \frac{3}{2}\right) 
&= 1 - \left(\frac{1}{2}\right)^{n} \sum_{j = \lceil n/2 \rceil}^{n} \binom{n}{j} \\
&= \begin{cases}
    \frac{1}{2} - \frac{1}{2^{n+1}}  \binom{n}{n/2} & \textrm{when} \;n\; \textrm{is even}; \\
    \frac{1}{2} & \textrm{when} \;n\; \textrm{is odd};
   \end{cases}
\end{align*}
and
\begin{align*}
\mathbb{P} \left(Z_{\lceil n/2 \rceil : n} < \frac{1}{2}\right)
&= \left(\frac{1}{2}\right)^{n} \sum_{j = \lceil n/2 \rceil}^{n} \binom{n}{j} \\
&= \begin{cases}
    \frac{1}{2} + \frac{1}{2^{n+1}}  \binom{n}{n/2} & \textrm{when} \;n\; \textrm{is even}; \\
    \frac{1}{2} & \textrm{when} \;n\; \textrm{is odd}.
   \end{cases}
\end{align*}
As $n \to \infty$, the term $\frac{1}{2^{n+1}}  \binom{n}{n/2}$ vanishes. This establishes statements~\eqref{z-conv-32} and~\eqref{z-conv-12}, and concludes the proof of statement~\eqref{h-fail-1}.  

\subsubsection{Derivation of expression~\eqref{eq-variance-spec}} 
\label{proof-eq-variance-spec}

Starting with equation~\eqref{eq-variance-spec0}, we need to calculate the first two moments of the random variable $\big( H^{-1}(U)-(1-a) \big)^{+}$. From Figure~\ref{figure-gap-1} we see that the random variable $H^{-1}(U)-(1-a) $ is (strictly) positive if and only if $U>1/2$, and under this condition, $H^{-1}(U)$ is equal to $2(1-a)U+2a$. Hence, 
\begin{equation}
\big( H^{-1}(U)-(1-a) \big)^{+} = \big( 2(1-a)U+2a \big) \mathds{1}\{ U>1/2\}. 
\label{eq-variance-spec1}
\end{equation} 
By calculating the first two moments of the random variable on the right-hand side of equation~\eqref{eq-variance-spec1}, we arrive at expression~\eqref{eq-variance-spec}.

\subsubsection{Proof of equation~\eqref{eq-kw}} 
\label{proof-eq-kw}

We start with the equations 
\begin{align*} 
K_w(F(x))
&=\int_{0}^{F(x)} w(u) \dd u 
\\
&=\int_{0}^{1} \mathds{1}\{u\le F(x)\} w(u) \dd u  
\\
&=\int_{0}^{1} \mathds{1}\{F^{-1}(u)\le x\} w(u) \dd u .   
\end{align*} 
Analogous equations hold for $K_w(G(x))$. Hence, 
\begin{align*} 
\int_{\mathbb{R}} \Big( K_w\big(F(x)\big) - K_w\big( G(x) \big) \Big)  \dd x 
&=\int_{\mathbb{R}} \bigg( \int_{0}^{1} \Big( \mathds{1}\{F^{-1}(u)\le x\} 
-\mathds{1}\{G^{-1}(u)\le x\} \Big) w(u) \dd u  \bigg)  \dd x 
\notag 
\\
&=\int_{0}^{1} \bigg( \int_{\mathbb{R}} \Big( \mathds{1}\{F^{-1}(u)\le x\} 
-\mathds{1}\{G^{-1}(u)\le x\} \Big)  \dd x  \bigg) w(u) \dd u  
\notag 
\\
&= \int_0^1 \Big( G^{-1}(u) - F^{-1}(u)\Big)  w(u) \dd u , 
\end{align*} 
where the right-most equation holds because the inner integral is equal to $G^{-1}(u) - F^{-1}(u)$.
This completes the proof of equation~\eqref{eq-kw}.

\subsubsection{Proof of equation~\eqref{eq-L-integral}}
\label{proof-L-integral}

Since $w_1$ is non-decreasing and right-continuous, and such that $w_1(0)\ge 0$, we have 
\begin{align*}
\int_0^1 F^{-1}(u) w_1(u) \dd u 
& =\int_0^1 F^{-1}(u) \bigg(\int_{0}^{\infty} \mathds{1}\{x\le w_1(u)\} \dd x\bigg) \dd u 
\\
&=  \int_0^1 F^{-1}(u) \bigg(\int_{0}^{\infty} \mathds{1}\{w_1^{-1}(x)\le u\} \dd x\bigg) \dd u 
\\
&=  \int_{0}^{\infty} \bigg(\int_0^1 F^{-1}(u) \mathds{1}\{w_1^{-1}(x)\le u\}\dd u \bigg) \dd x 
\\
&=  \int_{0}^{\infty} \bigg(\int_{w_1^{-1}(x)}^1 F^{-1}(u) \dd u \bigg) \dd x . 
\end{align*}
Analogous equations hold for $w_2$. 
This establishes equation~\eqref{eq-L-integral}. 
\end{document}